\documentclass[12pt]{article}%
\usepackage[colorlinks,bookmarks=true]{hyperref}
\usepackage{amsmath}
\usepackage{graphicx}
\usepackage{amsfonts}
\usepackage{amssymb}%
\setcounter{MaxMatrixCols}{30}
%TCIDATA{OutputFilter=latex2.dll}
%TCIDATA{Version=5.50.0.2960}
%TCIDATA{CSTFile=LaTeX article (bright).cst}
%TCIDATA{Created=Saturday, April 21, 2001 11:54:14}
%TCIDATA{LastRevised=Thursday, April 27, 2017 10:39:33}
%TCIDATA{<META NAME="GraphicsSave" CONTENT="32">}
%TCIDATA{<META NAME="SaveForMode" CONTENT="1">}
%TCIDATA{BibliographyScheme=Manual}
%TCIDATA{<META NAME="DocumentShell" CONTENT="Articles\SW\Standard LaTeX Article">}
%TCIDATA{Language=American English}
%BeginMSIPreambleData
\providecommand{\U}[1]{\protect\rule{.1in}{.1in}}
%EndMSIPreambleData
\newtheorem{theorem}{Theorem} [section]

\newtheorem{corollary}{Corollary}[section]

\newtheorem{definition}{Definition} [section]

\newtheorem{lemma}{Lemma}[section]

\newtheorem{remark}{Remark} [section]

\newenvironment{proof}[1][Proof]{\textbf{#1.} }{\ \rule{1em}{1em}}
\numberwithin{equation}{section}

\begin{document}

\title{Nonlinear Modulational Instability of Dispersive PDE Models}
\author{Jiayin Jin, Shasha Liao, and Zhiwu Lin\\School of Mathematics\\Georgia Institute of Technology\\Atlanta, GA 30332, USA}
\maketitle

\begin{abstract}
We prove nonlinear modulational instability for both periodic and localized
perturbations of periodic traveling waves for several dispersive PDEs,
including the KDV type equations (e.g. the Whitham equation, the generalized
KDV equation, the Benjamin-Ono equation), the nonlinear Schr\"{o}dinger
equation and the BBM equation. First, the semigroup estimates required for the
nonlinear proof are obtained by using the Hamiltonian structures of the
linearized PDEs; Second, for KDV type equations the loss of derivative in the
nonlinear term is overcome in two complementary cases: (1) for smooth
nonlinear terms and general dispersive operators, we construct higher order
approximation solutions and then use energy type estimates; (2) for nonlinear
terms of low regularity, with some additional assumption on the dispersive
operator, we use a bootstrap argument to overcome the loss of derivative.

\end{abstract}

\section{Introduction\newline}

The modulational instability, also called Benjamin-Feir or side-band
instability in the literature, is a very important instability mechanism in
lots of dispersive and fluid models. It has been used to explain the
instability of periodic wave trains to self modulation and the development of
large-scale structures such as envelope solitons. The modulational instability
has been observed in experiments and in nature, for many physical systems. The
first theoretical understanding of modulational instability arose in 1960s, in
the works of Benjamin and Feir (\cite{benjamin-feir}) for water waves and
independently by Lighthill (\cite{lighthill}), Whitham (\cite{whitham-1967}),
Zakharov (\cite{zakharov}) for various dispersive wave equations. We refer to
the review (\cite{zakharov-review-2009}) for more details on the history and
physical applications of modulational instability. In recent years, there have
been lots of mathematical work on the rigorous justification of linear
modulational instability for various dispersive wave models including the KDV
type equations, the nonlinear Schr\"{o}dinger equation, the BBM equation etc.
In particular, the modulational instability conditions for perturbations of
long wavelength (i.e. frequencies near zero) were derived in lots of works
(\cite{bronski-hur-14} \cite{bronski-johnson-arma} \cite{haragus-BBM}
\cite{haragus-gallay-schrodinger} \cite{haragus-kapitula}
\cite{hur-johnson-whitham} \cite{hur-pandey} \cite{johnson-fractional-13}). We
refer to the recent survey (\cite{hur-et-survey-15}) for more details and
references. The modulational instability for perturbations of high frequencies
(i.e. not near zero) was also considered in some papers
(\cite{deconinck-high-frequency} \cite{hur-pandey-high-frequency}). However,
there has been no proof of modulational instability under the nonlinear
dynamics of the PDE models. The purpose of this paper is to provide a proof of
nonlinear modulational instability under both multi-periodic and localized
perturbations, for a large class of dispersive wave models.

We mainly consider the KDV type equations,
\begin{equation}
\partial_{t}{u}+\partial_{x}(\mathcal{M}u+f(u))=0,\label{kdvtp}%
\end{equation}
where $\mathcal{M}$ is a Fourier multiplier operator satisfying
$\widehat{\mathcal{M}u}(\xi)=\alpha(\xi)\widehat{u}(\xi)$ and $f(s)\in
C^{1}(\mathbf{R},\mathbf{R})$. We make the following assumptions on the
operator $\mathcal{M}$:

(A1) $\mathcal{M}$ is a self-adjoint operator, and the symbol $\alpha
:\mathbf{R}\mapsto\mathbf{R}^{+}$ is even and regular near $0$.

(A2) There exist constants $m,c_{1},c_{2}>0$, such that%

\begin{equation}
\text{(A2a) \ }c_{1}\left\vert \xi\right\vert ^{m}\leq\alpha\left(
\xi\right)  \leq c_{2}\left\vert \xi\right\vert ^{m},\ \text{for large }\xi,
\label{symbol-KDV}%
\end{equation}
or
\begin{equation}
\text{(A2b) \ }c_{1}\left\vert \xi\right\vert ^{-m}\leq\alpha\left(
\xi\right)  \leq c_{2}\left\vert \xi\right\vert ^{-m},\ \text{for large }\xi.
\label{symbol-whitham}%
\end{equation}

The assumption (\ref{symbol-KDV}) implies that $\mathcal{M}$ is an
\textquotedblleft differential\textquotedblright\ operator with $\Vert
\mathcal{M}(\cdot)\Vert_{L^{2}}\sim\Vert\cdot\Vert_{H^{m}}$, and
(\ref{symbol-whitham}) implies that $\mathcal{M}$ is a \textquotedblleft
smoothing\textquotedblright\ operator with $\Vert\mathcal{M}(\cdot
)\Vert_{H^{m}}\sim\Vert\cdot\Vert_{L^{2}}$.$\ $For the classical KDV equation,
$\mathcal{M=-\partial}_{x}^{2}$ and $f\left(  u\right)  =u^{2}$. Other
examples include: Benjamin-Ono equation, Whitham equation and intermediate
long-wave (ILW) equation, which are all of KDV type with $\alpha
(\xi)=\left\vert \xi\right\vert ,$ $\sqrt{\frac{\tanh\xi}{\xi}}$ and $\xi
\coth\left(  \xi H\right)  -H^{-1}$ respectively.

For convenience, we assume $\min\alpha(\xi)>0$. Since otherwise, we can always
break $\mathcal{M=M}_{1}+c_{1}$, where $\mathcal{M}_{1}$ has a positive symbol
and $c_{1}$ is a constant. Then in the traveling frame $\left(  x-c_{1}%
t,t\right)  $, the equation (\ref{kdvtp}) becomes
\[
\partial_{t}{u}+\partial_{x}(\mathcal{M}_{1}u+f(u))=0.
\]

A periodic traveling wave (TW) of (\ref{kdvtp}) is of the form $u\left(
x,t\right)  =u_{c}\left(  x-ct\right)  $, where $c\in\mathbf{R}$ is the
traveling speed and $u_{c}$ satisfies the equation
\begin{equation}
\mathcal{M}u_{c}-cu_{c}+f\left(  u_{c}\right)  =a,\label{eqn-TW-kdv-type}%
\end{equation}
for a constant $a$. The existence of the periodic TWs of
(\ref{eqn-TW-kdv-type}) had been well studied in the literature, and we refer
to the book (\cite{angulo-book}) and the references therein. In general, the
periodic TWs are a three-parameter family of solutions depending on period
$T$, traveling speed $c$ and the constant $a$. The stability of TWs to
perturbations of the same period has been studied a lot in recent years (e.g.
\cite{angulo-book} \cite{angulo-bona-et-06} \cite{hur-johnson-stability}
\cite{lin-zeng-Hamiltonian} \cite{johnson-fractional-13}). The modulational
instability is to study the instability of periodic TWs for perturbations of
different periods and even for localized perturbations in $\mathbf{R}$. The
equation (\ref{kdvtp}) in the traveling frame $\left(  x-ct,t\right)  $
becomes
\begin{equation}
\partial_{t}U-c\partial_{x}U+\partial_{x}(\mathcal{M}U+f(U))=0.\label{kdvtv}%
\end{equation}
The linearized equation of (\ref{kdvtv}) near $u_{c}$ can be written in the
Hamiltonian form
\begin{equation}
\partial_{t}U=-\partial_{x}\left(  \mathcal{M-}c+f^{\prime}(u_{c})\right)
U=JLU,\label{kdvln}%
\end{equation}
where
\begin{equation}
J=-\partial_{x},\ \ L=\mathcal{M-}c+f^{\prime}(u_{c}).\label{definition-J-L}%
\end{equation}
Without lost of generality, we take the minimal period $T=2\pi$. By the
standard Floquet-Bloch theory, any bounded eigenfunction $\phi(x)$ of $JL$
takes the form $\phi(x)=e^{ikx}v_{k}(x)$, where $k\in\left[  0,1\right]  $ is
a parameter and $v_{k}\in L^{2}(\mathbb{T}_{2\pi})$. It leads us to the
one-parameter family of eigenvalue problems
\[
JLe^{ikx}v_{k}(x)=\lambda(k)e^{ikx}v_{k}(x),
\]
or equivalently $J_{k}L_{k}v_{k}=\lambda\left(  k\right)  v_{k}$, where
\begin{equation}
J_{k}=\partial_{x}+ik,\ L_{k}=\mathcal{M}_{k}\mathcal{-}c+f^{\prime}%
(u_{c}).\label{defn-Jk-Lk}%
\end{equation}
Here, $\mathcal{M}_{k}$ is the Fourier multiplier operator with the symbol
$\alpha(\xi+k)$.

\begin{definition}
\label{defn-MI}We say that $u_{c}$ is linearly modulationally unstable if
there exists $k\in\left[  0,1\right]  $ such that the operator $J_{k}L_{k}$
has an unstable eigenvalue $\lambda(k)$ with $\operatorname{Re}\lambda(k)>0$
in the space $L^{2}(\mathbb{T}_{2\pi})$.
\end{definition}

By above definition and the analytic perturbation theory of the spectra of
linear operators, if $k_{0}$ is an unstable frequency, then all $k$ near
$k_{0}$ are also unstable. So there exist intervals of unstable frequencies in
$\left[  0,1\right]  $. For periodic waves which are orbitally stable under
co-periodic perturbations (i.e. same period), it is shown in Proposition 11.3
of \cite{lin-zeng-Hamiltonian} that when $k$ is small (i.e. long wavelength),
the possible unstable eigenvalues of $J_{k}L_{k}$ can only be perturbed from
the zero eigenvalue of $JL$ in $L^{2}\left(  \mathbb{T}_{2\pi}\right)  $. The
conditions of linear modulational instability for such long wavelength
perturbations had been studied in lots of papers for various dispersive models
(see the references cited at the beginning). In Section \ref{application}, we
give some examples for which the linear modulational instability condition is satisfied.

Our first main result is the proof of nonlinear modulational instability under
both multi-periodic and localized perturbations, for a smooth nonlinear term
$f\left(  u\right)  $ and $\mathcal{M}$ with a general symbol.

\begin{theorem}
\label{thm-smooth-f} Assume (A1)-(A2a) or (A1)-(A2b) and
(\ref{assumption-TW-whitham}), $f\in C^{\infty}\left(  \mathbf{R}\right)
\ $and $u_{c}$ is linearly modulationally unstable. Then

i) $u_{c}$ is nonlinearly orbitally unstable to (\ref{kdvtv}) for
multi-periodic perturbations in the following sense: there exists
$q\in\mathbb{N},$ $\theta_{0}>0$, such that for any $s\in\mathbb{N}$ and
arbitrarily small $\delta>0$, there exists a solution $U_{\delta}(t,x)$ to
(\ref{kdvtv}) satisfying $\Vert U_{\delta}(0,x)-u_{c}(x)\Vert_{H^{s}%
({\mathbb{T}_{2\pi q}})}<\delta\ $and
\[
\inf_{y\in\mathbb{T}}\Vert U_{\delta}(T^{\delta},x)-u_{c}(x+y)\Vert
_{L^{2}({\mathbb{T}_{2\pi q}})}\geqslant\theta_{0},
\]
where $T^{\delta}\sim\left\vert \ln\delta\right\vert $.

ii) $u_{c}$ is nonlinearly unstable to (\ref{kdvtv}) for localized
perturbations in the following sense: there exists $\theta_{0}>0$, such that
for any $s\in\mathbb{N}$ and arbitrarily small $\delta>0$, there exists a
solution $U_{\delta}(t,x)$ to (\ref{kdvtv}) satisfying $\Vert U_{\delta
}(0,x)-u_{c}(x)\Vert_{H^{s}(\mathbf{R})}<\delta\ $and$\ \Vert U(T^{\delta
},x)-u_{c}(x)\Vert_{L^{2}(\mathbf{R})}\geqslant\theta_{0}$, where $T^{\delta
}\sim|\ln\delta|$.
\end{theorem}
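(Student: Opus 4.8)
The plan is to run a nonlinear instability argument of the type ``spectral gap $\Rightarrow$ nonlinear instability'' but adapted to the genuine difficulty here, namely that $\partial_x f(u)$ loses one derivative relative to the linear semigroup estimate and that the unstable modes live at a Floquet parameter $k=p/q$, so the natural function space is $H^s(\mathbb{T}_{2\pi q})$ (for the localized statement, one then transfers to $\mathbf{R}$). First I would record the linear ingredients: since $u_c$ is linearly modulationally unstable there is a largest unstable growth rate $\lambda_{\max}=\sup\{\operatorname{Re}\lambda:\lambda\in\sigma(J_kL_k),\ k\in[0,1]\}>0$, attained at some rational $k_0$, and by the Hamiltonian structure of (\ref{kdvln}) (as in \cite{lin-zeng-Hamiltonian}) the generated semigroup $e^{tJL}$ on $H^s(\mathbb{T}_{2\pi q})$ satisfies $\|e^{tJL}\|\le C_s e^{(\lambda_{\max}+\epsilon)t}$ for every $s$, with a genuine growing solution realizing the rate $\lambda_{\max}$. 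Let $u^{\mathrm{lin}}(t,x)$ denote such a growing solution of (\ref{kdvln}), normalized so $\|u^{\mathrm{lin}}(0)\|_{L^2}=1$.

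The core of the argument is to build, for each target regularity $s$, a high-order approximate solution
\[
U^{\mathrm{app}}(t,x)=u_c(x)+\sum_{j=1}^{N}\delta^{j}e^{j\lambda t}U_j(x),
\]
where $U_1=u^{\mathrm{lin}}$ and the profiles $U_j$ are determined recursively by solving, at each order, a linear equation of the form $(j\lambda - JL)U_j=(\text{polynomial in lower }U_i\text{'s})$; because $j\lambda$ lies to the right of $\sigma(JL)$ for $j\ge 1$ it avoids the spectrum, so each $U_j$ is well defined and smooth (here smoothness of $f$ is used, which is why this is the ``smooth $f$'' theorem). The point of carrying the expansion to order $N$ depends on $s$: plugging $U^{\mathrm{app}}$ into (\ref{kdvtv}) leaves a residual $\mathcal{R}$ of size $O(\delta^{N+1}e^{(N+1)\lambda t})$ in $H^{s+1}$, and one chooses $N=N(s)$ large enough that this beats the loss of one derivative in the energy estimate below. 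Then set $v=U-U^{\mathrm{app}}$, which solves $\partial_t v = JL v + (\text{quadratic-and-higher in }v) + \mathcal{R}$, and run a Gronwall/bootstrap estimate on $\|v(t)\|_{H^s}$: as long as $\|v\|_{H^s}$ stays below $\delta^{N+1/2}e^{(N+1/2)\lambda t}$ (say), the nonlinear and residual terms are dominated and $v$ remains negligible compared to the leading term $\delta e^{\lambda t}u^{\mathrm{lin}}$. The escape time is $T^\delta$ defined by $\delta e^{\lambda T^\delta}=\theta$ for a fixed small $\theta$, so $T^\delta\sim|\ln\delta|$, and at that time $\|U_\delta(T^\delta)-u_c\|_{L^2}\gtrsim \theta\|u^{\mathrm{lin}}\|_{L^2}$ up to modulation, giving the stated $\theta_0$; orbital (rather than pointwise) instability in part (i) is forced because translation is a symmetry, so one subtracts $\inf_y\|\cdot - u_c(\cdot+y)\|$ and checks that the growing eigenfunction $u^{\mathrm{lin}}$ is not tangent to the translation orbit (it cannot be, since $u_c'$ is a neutral mode, not a growing one). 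For part (ii) I would take the same $U_1=e^{ik_0 x}v_{k_0}$-type construction but now view everything on $\mathbf{R}$: the initial data $U_\delta(0)-u_c$ is $\delta$-small in $H^s(\mathbf{R})$ because $u^{\mathrm{lin}}$ is bounded and one may truncate/localize it (or simply note $e^{ik_0x}v_{k_0}(x)\in H^s_{loc}$ and use that the $L^2(\mathbf{R})$ norm of the solution over a fixed large window grows), and the same energy/bootstrap estimate runs with $\mathbb{T}_{2\pi q}$ replaced by $\mathbf{R}$ since all the operators $\mathcal{M},\partial_x,f'(u_c)$ act boundedly between the relevant Sobolev spaces on the line and the semigroup bound $\|e^{tJL}\|_{H^s(\mathbf{R})}\le C_se^{(\lambda_{\max}+\epsilon)t}$ holds by the same Floquet-Bloch/Hamiltonian argument integrated over $k\in[0,1]$.

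The main obstacle is the derivative loss: the energy estimate for $v$ in $H^s$ sees $\partial_x\big(f'(u_c)v + \tfrac12 f''(u_c)v^2+\cdots\big)$, and the linear part $\partial_x(f'(u_c)v)$ is exactly absorbed into $JLv$, but the commutator-type and nonlinear terms cost a derivative that the smoothing from $e^{tJL}$ does not supply (the symbol $\alpha(\xi)$ only grows like $|\xi|^m$ in case (A2a) and decays in (A2b), so $JL$ is never parabolic-smoothing). The device that saves the argument is precisely the high-order expansion: by making the residual $\mathcal{R}$ as small as $\delta^{N+1}e^{(N+1)\lambda t}$ with $N$ chosen after $s$, the window during which $\|v\|_{H^s}$ must be controlled is short enough (length $\sim|\ln\delta|$ with the favorable large exponent $N+1$) that a direct $H^s$ energy inequality — integrating by parts once and eating the top-order term using that $f'(u_c)$ is a smooth bounded coefficient — closes without needing any gain of regularity from the semigroup; one only needs the exponential bound on $\|e^{tJL}\|_{H^s}$, which is the content of the Hamiltonian semigroup estimate. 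Thus the logical order is: (1) semigroup estimate on $e^{tJL}$ in $H^s$; (2) construction and estimates of $U^{\mathrm{app}}$ to order $N(s)$; (3) bootstrap on $v=U-U^{\mathrm{app}}$ up to the escape time; (4) conclude orbital instability on $\mathbb{T}_{2\pi q}$ and then transfer to $\mathbf{R}$.
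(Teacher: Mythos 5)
Your part (i) is essentially the paper's argument (Grenier-type construction of a high-order approximate solution plus an energy estimate closed by Gronwall up to the escape time $\delta e^{\operatorname{Re}\lambda T^\delta}=\theta$, then the \cite{gss90}-style projection to pass to the orbital distance). Two technical remarks there: the paper solves the Duhamel problems $\partial_t U_j=JLU_j+\partial_x P_j$, $U_j(0)=0$, and gets the growth $e^{j\operatorname{Re}\lambda t}$ from the semigroup bound, rather than inverting $(j\lambda-JL)$; your ansatz with time-independent profiles $e^{j\lambda t}U_j(x)$ does not quite work as written because $U_1$ must be taken real-valued, so the forcing at order $j$ contains mixed exponentials $e^{(a\lambda+b\bar\lambda)t}$, $a+b=j$ (fixable, but the Duhamel route avoids the issue). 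Also, in the paper $N$ is fixed by the constant in a single $H^2$ energy estimate, not by $s$; the smallness in $H^s$ of the initial data is free since the perturbation is $\delta$ times a fixed smooth eigenfunction. These are cosmetic differences.

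Part (ii), however, has a genuine gap: the construction of the localized initial data. The Bloch mode $e^{ik_0x}v_{k_0}(x)$ is bounded and quasi-periodic, hence not in $H^s(\mathbf{R})$, and truncating it (or arguing via ``growth on a fixed large window'') destroys the property of being a solution of the linearized equation; the truncation error is not obviously controllable over times $T^\delta\sim|\ln\delta|$, and the theorem's conclusion is a lower bound in $L^2(\mathbf{R})$, which a windowed statement does not give. The paper's device is to take a genuine wave packet $u_1(x)=2\operatorname{Re}\int_I v_1(k,x)e^{ikx}\,dk$ over a small interval $I$ of unstable Floquet frequencies adjacent to the most unstable $k_0$; by the norm equivalence of Lemma \ref{ptor} this lies in $H^s(\mathbf{R})$, and $U_1(t,x)=2\operatorname{Re}\int_I v_1(k,x)e^{\lambda(k)t}e^{ikx}\,dk$ is an exact solution of (\ref{kdvln}). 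One then needs a two-sided growth estimate $\Vert U_1(t)\Vert_{L^2(\mathbf{R})}\approx e^{\lambda_0 t}(1+t)^{-1/l}$ (Lemma \ref{Lemma-U1-estimates}), whose proof requires knowing that $\operatorname{Re}\lambda(k)-\lambda_0\sim -a_0(k-k_0)^l$ with $l$ even; when $\lambda(k_0)$ is a multiple eigenvalue this is not automatic and is handled by the Puiseux-series argument of Lemma \ref{lemma-Puisex}. The algebraic factor $(1+t)^{-1/l}$ must then be propagated through the construction of the $U_j$ via the inhomogeneous semigroup estimate (Lemma \ref{lemma-semigroup-inhomo-line}) and through the final lower bound at $T^\delta$, with $T^\delta$ redefined by $\delta e^{\lambda_0T^\delta}(1+T^\delta)^{-1/l}=\theta$. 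None of this appears in your proposal, and without a lower bound on the $L^2(\mathbf{R})$ growth of an admissible (i.e.\ $H^s(\mathbf{R})$) linear solution, the instability conclusion on the line cannot be closed. A secondary omission is local well-posedness of (\ref{kdvtv}) in $u_c+H^s(\mathbf{R})$ (the paper's Lemma \ref{lemma-wp}, via Kato's quasilinear theory), which is needed even to speak of the solution $U_\delta$ in the localized setting.
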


For some examples, $f\left(  u\right)  $ is not smooth. Our second result is
complementary to Theorem \ref{thm-smooth-f}, about nonlinear modulational
instability for non-smooth $f$ with some additional assumptions.

\begin{theorem}
\label{thm-bootstrap-instability}Assume
\begin{equation}
f\in C^{2n+2}\left(  \mathbf{R}\right)  ,\ \text{where\ }n\geqslant\frac{1}%
{2}\max\{1+m,1\}\ \text{is an integer,}\label{assumption-f}%
\end{equation}
the symbol $\alpha\left(  \xi\right)  \ $of $\mathcal{M}$ satisfies the
condition
\begin{equation}
c_{1}\left\vert \xi\right\vert ^{m}\leq\alpha\left(  \xi\right)  \leq
c_{2}\left\vert \xi\right\vert ^{m},\ \ m\geq1,c_{1},c_{2}>0,\ \text{for large
}\xi.\label{assumption-symbol-bootstrap}%
\end{equation}
Suppose $u_{c}$ is linearly modulationally unstable. Then $u_{c}$ is
nonlinearly unstable to (\ref{kdvtv}) for both multi-periodic and localized
perturbations in the sense of Theorem \ref{thm-smooth-f}, with the initial
perturbation arbitrarily small in $H^{2n}\left(  {\mathbb{T}_{2\pi q}}\right)
$ or $H^{2n}\left(  \mathbf{R}\right)  $.
\end{theorem}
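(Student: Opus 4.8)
The plan is to run the standard nonlinear-instability scheme (linear growth $\Rightarrow$ nonlinear growth), but the new difficulty compared with Theorem \ref{thm-smooth-f} is that $f$ is only $C^{2n+2}$, so we cannot build arbitrarily high-order approximate solutions and close an energy estimate at the top regularity. Instead, following the bootstrap philosophy indicated in the abstract, I would fix the integer $n\geq\tfrac12\max\{1+m,1\}$ and work in the Sobolev spaces $H^{2n}$ and $L^{2}$ simultaneously. The first step is to record the two semigroup bounds coming from the Hamiltonian structure of $JL$ (established earlier in the paper): on the one hand $\|e^{tJL}\|_{L^{2}\to L^{2}}\leq Ce^{\lambda_{0}t}$ with $\lambda_{0}=\max_{k}\operatorname{Re}\lambda(k)>0$ the maximal unstable growth rate, and on the other hand, because $J L$ commutes with $\partial_{x}$ only up to lower order and $\mathcal{M}$ is a genuine differential operator of order $m\geq1$, a bound of the form $\|e^{tJL}\|_{H^{2n}\to H^{2n}}\leq C e^{\mu t}$ for some (possibly larger) $\mu>0$. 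One then picks a genuine eigenfunction $\phi=e^{ik_{0}x}v_{k_{0}}$ for an unstable $\lambda_{0}$; by Floquet-Bloch this is $2\pi q$-periodic for an appropriate $q\in\mathbb{N}$, which fixes the period in part (i), and a localized (Bloch-wave-packet) superposition handles part (ii).

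The second step is the iteration/bootstrap argument itself. Write the solution as $U = u_{c} + \delta\, e^{\lambda_{0}t}\operatorname{Re}\phi + w$, where $w$ is the remainder. The Duhamel formula gives $w(t)=\int_{0}^{t} e^{(t-\tau)JL}\,\partial_{x}N(\tau)\,d\tau$, where $N$ collects the quadratic-and-higher terms in $f$. Here is where the loss of one derivative in $\partial_{x}N$ must be absorbed: the condition $m\geq1$ is exactly what lets the smoothing of the dispersive semigroup (or, more precisely, the $L^{2}$--$H^{1}$ parabolic-type smoothing available because $JL = -\partial_{x}\mathcal{M}+\text{lower order}$ has symbol with real part growing like $|\xi|\cdot\text{Im}\,\alpha$... — more carefully, the relevant gain comes from treating $\partial_x$ against the regularizing effect in the high-frequency regime) compensate the derivative. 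I would set up a continuity (bootstrap) argument: on the maximal time interval where $\|w(t)\|_{H^{2n}}\leq \delta e^{\lambda_{0}t}\cdot\epsilon_0$ and $\|w(t)\|_{L^{2}}\leq (\delta e^{\lambda_{0}t})^{1+\sigma}$ for a small $\sigma>0$, estimate $N$ using that $H^{2n}$ is an algebra controlling $f\in C^{2n+2}$ (so $\|N\|_{H^{2n-1}}$ and $\|N\|_{L^2}$ are quadratic in the size of $U-u_c$), plug into Duhamel, and show the bounds improve with a strictly better constant — closing the bootstrap up to the escape time $T^{\delta}$ at which $\delta e^{\lambda_{0}T^{\delta}}$ reaches a fixed small threshold, so $T^{\delta}\sim|\ln\delta|$. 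At that time the linear term $\delta e^{\lambda_{0}t}\operatorname{Re}\phi$ dominates $w$ in $L^{2}$, forcing $\|U(T^{\delta})-u_{c}\|_{L^{2}}\geq\theta_{0}$ and, after quotienting by the spatial translation $u_{c}(\cdot+y)$ as in Theorem \ref{thm-smooth-f}, the orbital statement.

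The step I expect to be the main obstacle is exactly the derivative-loss bookkeeping in the Duhamel estimate for $w$ in $H^{2n}$: one must show that although $\partial_{x}N$ costs a derivative, the combination of (a) the mild smoothing of $e^{tJL}$ at high frequency under $m\geq1$ and (b) the fact that $w$ is measured in $L^{2}$ in the "dangerous" quadratic interaction lets us split $N$ into a part estimated by the $H^{2n}$ norm (small, multiplied by the small $L^2$ norm of $w$) and a part estimated by the $L^{2}$ norm (for which no derivative is needed), so that no term ever requires $\|w\|_{H^{2n+1}}$. Making this splitting rigorous — in particular verifying that $n\geq\tfrac12\max\{1+m,1\}$ is the precise threshold for which $H^{2n}$ both is a Banach algebra and sees enough smoothing to absorb the $m$-dependent loss from $\mathcal{M}$ in the approximate-solution construction — is the technical heart of the proof. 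Everything else (choice of $\phi$, the localized-perturbation packet, the final lower bound) parallels the proof of Theorem \ref{thm-smooth-f} and requires only routine modification.
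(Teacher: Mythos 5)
Your proposal hinges on a smoothing mechanism that does not exist for these equations. You write that the loss of the derivative in $\partial_{x}N$ is absorbed by ``the $L^{2}$--$H^{1}$ parabolic-type smoothing'' of $e^{tJL}$, available because the symbol has ``real part growing like $|\xi|\cdot\operatorname{Im}\alpha$.'' But by assumption (A1) the symbol $\alpha$ is real, so the principal symbol of $JL=-\partial_{x}(\mathcal{M}-c+f^{\prime}(u_{c}))$ is purely imaginary: $e^{tJL}$ is an oscillatory (group) evolution with no gain of derivatives whatsoever, and the hypothesis $m\geq1$ gives no dissipation. Consequently your Duhamel bootstrap at the top regularity $H^{2n}$ cannot close --- each application of $e^{(t-\tau)JL}\partial_{x}$ genuinely costs one derivative, and there is no splitting of $N$ between $H^{2n}$ and $L^{2}$ that repairs this, since the dangerous term is the top-order derivative falling on $w$ itself. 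You have also misread the role of the hypotheses: the condition $f\in C^{2n+2}$ with $n\geq\frac{1}{2}\max\{1+m,1\}$ is used in the paper only to get local well-posedness in $H^{2n}$ by Kato's method (Lemma \ref{lemma-wp}); it is not an algebra/smoothing threshold, and the instability argument itself needs only $f\in C^{1}$ with the growth conditions (\ref{assumption-f-bootstrap})--(\ref{assumption-F-bootstrap}).

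The paper's actual route is different in all three of its essential steps, none of which appear in your proposal. First, the conserved energy--momentum functional $H(u)$ is used to bootstrap the assumed $L^{2}$ growth rate up to the energy norm $H^{\frac{m}{2}}$ (here is where $m\geq1$ enters, through the embedding needed to control the cubic-and-higher remainder of $F$ by the $H^{\frac{m}{2}}$ norm). Second, the derivative loss is eliminated by measuring the Duhamel term in $H^{-1}$, where $\Vert e^{(t-s)JL}\partial_{x}(\cdots)\Vert_{H^{-1}}\lesssim e^{(\lambda_{0}+\varepsilon)(t-s)}\Vert f(u+u_{c})-f(u_{c})-f^{\prime}(u_{c})u\Vert_{L^{2}}$; this requires the new semigroup estimates for $e^{tJL}$ on $H^{-1}$ (Lemmas \ref{lemma-semigroup-H-1} and \ref{lemma-localized-H-1}, proved by duality via $e^{tLJ}$ on $H^{1}$ and, in the localized case, a careful spectral decomposition of $L_{\xi}$ near and away from $0$), which your argument never invokes. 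Third, the estimate is closed by interpolating $L^{2}$ between $H^{-1}$ and $H^{\frac{m}{2}}$, which yields a superlinear power $p_{3}>1$ of the small quantity $\delta e^{\lambda_{0}t}/(1+t)^{1/l}$ and hence lets the linear wave packet dominate up to the escape time $T^{\delta}\sim|\ln\delta|$. Your choice of initial data (eigenfunction in the periodic case, Bloch wave packet in the localized case) and the final lower bound do match the paper, but without the energy-conservation step, the $H^{-1}$ semigroup estimate, and the interpolation, the core of the argument is missing and the proposed $H^{2n}$ bootstrap would fail.
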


\begin{remark}
In Theorem \ref{thm-bootstrap-instability}, the regularity assumption
(\ref{assumption-f}) on $f$ is only used to prove that the equation
(\ref{kdvtv}) is locally well-posed in $H^{2n}\left(  {\mathbb{T}_{2\pi q}%
}\right)  $ and $u_{c}+H^{2n}\left(  \mathbf{R}\right)  $ by Kato's approach
(see Lemma \ref{lemma-wp}). Assuming the local well-posedness of (\ref{kdvtv})
in the energy space $H^{\frac{m}{2}}$, we only need the following much weaker
assumptions on $f$ to prove nonlinear instability:

$f\in C^{1}\left(  \mathbf{R}\right)  \ $and there exist $p_{1}>1,\ p_{2}%
>2,\ $such that%
\begin{equation}
\left\vert f\left(  u+v\right)  -f\left(  v\right)  -f^{\prime}\left(
v\right)  u\right\vert \leq C\left(  \left\vert u\right\vert _{\infty
},\left\vert v\right\vert _{\infty}\right)  \left\vert u\right\vert ^{p_{1}%
},\label{assumption-f-bootstrap}%
\end{equation}%
\begin{equation}
\left\vert F\left(  u+v\right)  -F\left(  v\right)  -f\left(  v\right)
u-\frac{1}{2}f^{\prime}\left(  v\right)  u^{2}\right\vert \leq C\left(
\left\vert u\right\vert _{\infty},\left\vert v\right\vert _{\infty}\right)
\left\vert u\right\vert ^{p_{2}},\label{assumption-F-bootstrap}%
\end{equation}
where $F\left(  u\right)  =\int_{0}^{u}f\left(  s\right)  ds$. The conditions
(\ref{assumption-f-bootstrap})-(\ref{assumption-F-bootstrap}) are
automatically satisfied when $f\in C^{2}\left(  \mathbf{R}\right)  $.
\end{remark}

In above Theorems, the nonlinear instability for multi-periodic perturbations
is proved in the orbital distance since (\ref{kdvtv}) is translation
invariant. For localized perturbations, we study the equation (\ref{kdvtv}) in
the space $u_{c}+H^{s}\left(  \mathbf{R}\right)  $ which is not translation
invariant. Therefore, we do no use the orbital distance for nonlinear
instability under localized perturbations.

Below we discuss main ingredients in the proof of Theorems \ref{thm-smooth-f}
and \ref{thm-bootstrap-instability}. For the proof of nonlinear instability,
first we need to establish the semigroup estimates for the linearized equation
(\ref{kdvln}), more specifically, to show that the growth of solutions of
(\ref{kdvln}) is bounded by the maximal growth rate of unstable eigenvalues of
the linearized operator. To get such estimates, we strongly use the
Hamiltonian structure of the linearized equation (\ref{kdvln}). For
multi-periodic perturbations, since $L$ has only finitely many negative modes,
this fits into the general theory developed by Lin and Zeng
(\cite{lin-zeng-Hamiltonian}) and the semigroup estimates follow directly from
the exponential trichotomy Theorem \ref{theorem-dichotomy}. For localized
perturbations, the quadratic form of $L$ has infinitely many negative modes
and we cannot use Theorem \ref{theorem-dichotomy} directly. By observing that
any function $u\in H^{s}{(\mathbf{R)}}$ can be written as
\[
u(x)=\int_{0}^{1}e^{i\xi x}u_{\xi}(x)d\xi,\ \text{where\ }u_{\xi}%
(x)=\Sigma_{n\in\mathbf{Z}}e^{inx}\hat{u}(n+\xi)\in H^{s}(\mathbb{T}_{2\pi}),
\]
and
\begin{equation}
\Vert u(x)\Vert_{H^{s}{(\mathbf{R})}}^{2}\thickapprox\int_{0}^{1}\Vert u_{\xi
}\left(  x\right)  \Vert_{H^{s}(\mathbb{T}_{2\pi})}^{2}\,d\xi
,\ \label{norm-local-equivalence}%
\end{equation}%
\[
e^{tJL}u(x)=\int_{0}^{1}e^{i\xi x}e^{tJ_{\xi}L_{\xi}}u_{\xi}\left(  x\right)
\,d\xi,
\]
the estimate of $e^{tJL}|_{H^{s}{(\mathbf{R})}}$ is reduced to estimate
$e^{tJ_{\xi}L_{\xi}}|_{H^{s}(\mathbb{T}_{2\pi})}$ uniformly for $\xi\in\left[
0,1\right]  $. This is proved in \cite{lin-zeng-Hamiltonian} for the case when
$\mathcal{M}$ is \textquotedblleft differential\textquotedblright\ (i.e.
(\ref{symbol-KDV})) and in Lemma \ref{lemma-semigroup-whitham-localized} when
$\mathcal{M}$ is \textquotedblleft smoothing\textquotedblright\ (i.e.
(\ref{symbol-whitham})).

With the semigroup estimates, to prove nonlinear instability we still need to
overcome the loss of derivative of the nonlinear term in (\ref{kdvtv}). We use
two different approaches to handle two complementary cases. For the case of
smooth nonlinear term and general $\mathcal{M}$ including both
\textquotedblleft differential\textquotedblright\ and \textquotedblleft
smoothing\textquotedblright\ cases, we basically adapt Grenier's approach in
\cite{grenier-2000} developed for proving nonlinear instability of shear flows
of the 2D Euler equation. The idea is to construct higher order approximate
solutions of (\ref{kdvtv}) and then use the energy estimates to overcome the
loss of derivative. When the nonlinear term is smooth, the approximate
solution can be constructed to sufficiently high order to compensate for the
roughness of the energy estimates. For the multi-periodic case, the initial
perturbation is chosen to be along the most unstable eigenfunction. For the
localized case, since there is no genuine eigenfunction of $JL$ in
$L^{2}\left(  \mathbf{R}\right)  $, the initial perturbation is constructed as
a wave packet concentrated near the most unstable frequency.

When the nonlinear term is not smooth, we cannot use the approach of higher
order approximate solutions. Instead, a totally different approach of
bootstrap estimates is used to overcome the loss of derivative when $f$ is
$C^{1}$ with the growth conditions (\ref{assumption-f-bootstrap}%
)-(\ref{assumption-F-bootstrap}) and $\mathcal{M}$ is \textquotedblleft
differential\textquotedblright\ with the condition (\ref{symbol-KDV}). First,
the invariance of the energy functional is used to show that $H^{\frac{m}{2}}%
$norm of the unstable solution has the same growth as the $L^{2}$ norm. Then
we estimate the growth of $H^{-1}$ norm with the help of the semigroup
estimate $e^{tJL}|_{H^{-1}}$. The estimates are closed by interpolating
$H^{-1}$ and $H^{\frac{m}{2}}$ to get back to $L^{2}$. The loss of derivative
in the nonlinear term $\partial_{x}f\left(  u\right)  \ $is overcome by
observing that
\[
\left\Vert \partial_{x}f\left(  u\right)  \right\Vert _{H^{-1}}\thickapprox
\left\Vert f\left(  u\right)  \right\Vert _{L^{2}},
\]
which is controllable in $H^{\frac{m}{2}}$. To get the crucial semigroup
estimate $e^{tJL}|_{H^{-1}}$ used in the above bootstrap process, by duality
it is equivalently to estimate $e^{tLJ}|_{H^{1}}$, which is then related to
$e^{tJL}$ by certain conjugate transforms. The proof is much more involved for
the localized case. By using the norm equivalence
(\ref{norm-local-equivalence}), this is reduced to estimate $e^{tL_{\xi}%
J_{\xi}}|_{H^{1}(\mathbb{T}_{2\pi})}$ uniformly for $\xi\in\left[  0,1\right]
$. This is done by a careful decomposition of the spectral projections of
$L_{\xi}$ near $0$ and away from $0$. We note that the idea of overcoming the
loss of derivative by bootstrapping the growth of higher order norms from a
lower order one was originated in \cite{guo-strauss95} for the Vlasov-Poisson
system. This approach was later extended to treat other problems including 2D
Euler equation (\cite{guo-bardos02} \cite{lin-euler-imrn}) and Vlasov-Maxwell
systems (\cite{lin-vm-nonlinear}). Here, our approach of bootstrapping the
lower order norm $\left(  H^{-1}\right)  \ $from a higher order norm $\left(
H^{\frac{m}{2}}\right)  $ and then closing by interpolation seems to be new.
This idea coupled with the $H^{-1}$ semigroup estimates could be useful in
other problems involving the loss of derivative.

Besides the KDV type equations, modulational instability also appears in
semilinear models such as BBM and Schr\"{o}dinger equations. Since there is no
loss of derivative, the nonlinear instability can be proved by ODE arguments.
The required semigroup estimates can be obtained similarly by using the
Hamiltonian structures. As an example, we consider BBM equation in Section 7.

This paper is organized as follows. In Section 2, we study the regularity of
unstable eigenfunctions. In Section 3, we gather and prove the semigroup
estimates used in the proof of nonlinear instability. In Sections 4 and 5, the
nonlinear instability for multi-periodic and localized cases is proved by
constructing higher order approximate solutions. In Section 6, we prove
nonlinear instability by bootstrap arguments. In Section 7, we prove nonlinear
instability for BBM equation. In the final Section 8, we list some models for
which our theorems are applicable.

\section{Linear Modulational Instability}

In this section, we prove the regularity of the unstable eigenfunctions of
$J_{k}L_{k}$. In the proof below and throughout this paper, we use $g\lesssim
h$ $\left(  g,h\geq0\right)  \ $to denote $g\leq Ch$, $\ $for a generic
constant $C>0$, which may differ from one inequality to another. First, we
consider the case when $\mathcal{M}$ is a "differential" operator as in the
case of the KDV, the Benjamin-Ono and the ILW equations.

\begin{lemma}
\label{lem-eigenfunction-kdv} Assume (\ref{symbol-KDV}). If \thinspace$f\in
C^{\infty}\left(  \mathbf{R}\right)  $ and $v_{k}(x)\in L^{2}(\mathbb{T}%
_{2\pi})$ is an unstable eigenfunction to $J_{k}L_{k}$ with $k\in\left[
0,1\right]  $, then $v_{k}\in H^{s}(\mathbb{T}_{2\pi})$ for every
$s\in\mathbb{N}$.
\end{lemma}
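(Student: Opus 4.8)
The plan is a bootstrap/elliptic-regularity argument driven by the eigenvalue equation itself. We have $J_k L_k v_k = \lambda v_k$ with $\operatorname{Re}\lambda > 0$, i.e.
$$-(\partial_x + ik)\big(\mathcal{M}_k v_k - c v_k + f'(u_c) v_k\big) = \lambda v_k.$$
Since $\operatorname{Re}\lambda > 0$, in particular $\lambda \neq 0$, so $J_k$ is effectively invertible on the relevant space and we can solve for $\mathcal{M}_k v_k$ in terms of lower-order quantities:
$$\mathcal{M}_k v_k = -\lambda (\partial_x + ik)^{-1} v_k + c v_k - f'(u_c) v_k.$$
Here $(\partial_x+ik)^{-1}$ must be interpreted with care on $L^2(\mathbb{T}_{2\pi})$ — the symbol is $(in+ik)^{-1}$, which is bounded (in fact smoothing of order $1$) provided $n+k \neq 0$ for all $n \in \mathbb{Z}$; when $k \in (0,1)$ this holds, and the $k=0,1$ cases correspond to $n=0$, which one handles by noting the eigenvalue equation forces the zero Fourier mode of $(\mathcal{M}_k - c + f'(u_c))v_k$ to be consistent, or simply by projecting it out. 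The key structural input is assumption (A2a): the symbol $\alpha(\xi+k)$ of $\mathcal{M}_k$ satisfies $\alpha(\xi+k) \gtrsim |\xi|^m$ for large $\xi$, so $\mathcal{M}_k$ is invertible modulo finite rank and $\|\mathcal{M}_k w\|_{L^2} \sim \|w\|_{H^m}$ (up to lower-order corrections on the finitely many small frequencies).

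The iteration then runs as follows. Suppose inductively that $v_k \in H^{s_0}(\mathbb{T}_{2\pi})$ for some $s_0 \geq 0$ (the base case $s_0=0$ being the hypothesis). On the right-hand side: $(\partial_x+ik)^{-1}v_k \in H^{s_0+1}$; $cv_k \in H^{s_0}$; and since $f \in C^\infty$ and $u_c$ is smooth (it solves the smooth ODE/integral equation \eqref{eqn-TW-kdv-type}; this smoothness of $u_c$ itself should be noted, and follows from the same elliptic-regularity reasoning applied to \eqref{eqn-TW-kdv-type}), we have $f'(u_c) \in C^\infty$, hence $f'(u_c) v_k \in H^{s_0}$ by the algebra/multiplier property of $H^{s_0}$ on the torus (for $s_0 \geq 0$ this needs the standard product estimate $\|gw\|_{H^{s_0}} \lesssim \|g\|_{C^{s_0}}\|w\|_{H^{s_0}}$, valid since $g$ is smooth). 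Therefore $\mathcal{M}_k v_k \in H^{s_0}(\mathbb{T}_{2\pi})$, and by the mapping property of $\mathcal{M}_k$ (invertibility modulo finite rank, symbol growth $|\xi|^m$) we conclude $v_k \in H^{s_0 + m}(\mathbb{T}_{2\pi})$. Iterating, $v_k \in H^{s_0 + jm}$ for all $j \in \mathbb{N}$, and since $m > 0$ this gives $v_k \in H^s$ for every $s \in \mathbb{N}$.

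The main obstacle is the bookkeeping around the low-frequency/finite-rank part of $\mathcal{M}_k$: the symbol bound $c_1|\xi|^m \leq \alpha(\xi) \leq c_2|\xi|^m$ only holds "for large $\xi$", so $\mathcal{M}_k$ is not literally invertible, and one must split $v_k = P_{\mathrm{low}} v_k + P_{\mathrm{high}} v_k$ into a finite-dimensional piece (which is automatically smooth, being a finite trigonometric sum) and a high-frequency piece on which $\mathcal{M}_k$ genuinely gains $m$ derivatives. A secondary point requiring care is the treatment of $k \in \{0,1\}$ where $(\partial_x+ik)$ has a kernel; there one works on the complement of constants, using that $\lambda \neq 0$ guarantees $v_k$ has no zeroth Fourier mode contribution obstructing the inversion (integrate the eigenvalue equation over the period). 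Neither of these is a serious difficulty — they are routine once the decomposition is set up — so the proof is essentially the elliptic bootstrap sketched above. I would also remark that if instead one only has (A2b) (the "smoothing" case, $\alpha \sim |\xi|^{-m}$), this lemma as stated is false and a different smoothing mechanism is needed — but the lemma is correctly stated under \eqref{symbol-KDV} only.
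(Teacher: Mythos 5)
Your argument is correct, but it runs along a genuinely different line from the paper's. You invert $J_k=\partial_x+ik$ (using $\lambda\neq 0$, with the kernel modes at $k\in\{0,1\}$ projected out after integrating the eigenvalue equation over the period), write $\mathcal{M}_k v_k=-\lambda(\partial_x+ik)^{-1}v_k+cv_k-f'(u_c)v_k$, and run a classical elliptic bootstrap using the ``differential'' character of $\mathcal{M}_k$ from (\ref{symbol-KDV}), gaining $m$ derivatives per step; the needed smoothness of $u_c$ and the low-frequency/finite-rank bookkeeping are correctly flagged and are indeed routine (in fact the paper's standing convention $\min\alpha>0$ makes $\mathcal{M}_k$ genuinely invertible, so the finite-rank splitting is not even needed). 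The paper instead never inverts $J_k$: it exploits the Hamiltonian structure, taking real parts of inner products of the eigenvalue equation (and of $\mathcal{M}_k$ applied to it) with $L_kv_k$ and using skew-adjointness of $J_k$ and $\mathcal{M}_kJ_k$ together with $\operatorname{Re}\lambda>0$ to obtain ``conservation-law'' identities $\langle L_kv_k,v_k\rangle=0$, $\operatorname{Re}\langle\lambda\mathcal{M}_kv_k,L_kv_k\rangle=0$, etc., which successively yield $\Vert v_k\Vert_{H^{s}}\leq C(s)\Vert v_k\Vert_{L^2}$. The trade-off: your route is more elementary and standard, but your constants degrade with $|\lambda|$ and with $\mathrm{dist}(k,\mathbb{Z})$ (through $(\partial_x+ik)^{-1}$), whereas the paper's identities give bounds depending only on $c$ and norms of $f'(u_c)$ --- independent of $\lambda$ and essentially uniform in $k$ --- which is in keeping with the Hamiltonian-structure machinery used throughout the paper; on the other hand your scheme avoids the half-power estimates via $\mathcal{M}_k^{1/2}$ and adapts verbatim to any eigenvalue with $\lambda\neq 0$, not just unstable ones. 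Your closing remark is also consistent with the paper: in the smoothing case (\ref{symbol-whitham}) the paper proves regularity separately (Lemma \ref{lem-eigenfunction-whitham}) under the additional hypothesis (\ref{assumption-TW-whitham}), precisely because the mechanism you use here is unavailable.
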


\begin{proof}
By assumption, there exists $\lambda\left(  k\right)  $ with
$\operatorname{Re}\lambda\left(  k\right)  >0$ such that
\begin{equation}
J_{k}L_{k}v_{k}=\lambda\left(  k\right)  v_{k},\ \ v_{k}(x)\in L^{2}%
(\mathbb{T}_{2\pi}),\label{egv-Jk-Lk}%
\end{equation}
where $J_{k},L_{k}$ are defined in (\ref{defn-Jk-Lk}). It is easy to see that
$J_{k}$ is a skew-adjoint operator and $L_{k}$ is a self-adjoint operator.
Taking the real part of the $L^{2}$ inner product of (\ref{egv-Jk-Lk}) with
$L_{k}v_{k}$, we get the following \textquotedblleft conservation
law\textquotedblright:
\[
\operatorname{Re}\left\langle \lambda v_{k},L_{k}v_{k}\right\rangle
=\operatorname{Re}\left\langle J_{k}L_{k}v_{k},L_{k}v_{k}\right\rangle =0.
\]
Since $L_{k}$ is self-adjoint, $\left\langle v_{k}(x),L_{k}v_{k}%
(x)\right\rangle $ is real. It follows that
\[
(\operatorname{Re}\lambda)\left\langle v_{k},L_{k}v_{k}\right\rangle =0.
\]
Noting that $\operatorname{Re}\lambda>0$, we have
\[
\left\langle v_{k},L_{k}v_{k}\right\rangle =\left\langle v_{k}%
(x),(c-\mathcal{M}_{k}-f^{\prime}(u_{c}))v_{k}(x)\right\rangle =0,
\]
i.e.
\[
c\int_{\mathbb{T}_{2\pi}}v_{k}(x)\overline{v_{k}(x)}\,dx-\int_{\mathbb{T}%
_{2\pi}}v_{k}(x)\overline{\mathcal{M}_{k}v_{k}(x)}\,dx-\int_{\mathbb{T}_{2\pi
}}v_{k}(x)\overline{f^{\prime}(u_{c})v_{k}(x)}\,dx=0.
\]
It follows immediately that%
\begin{align*}
|\langle\mathcal{M}_{k}v_{k},v_{k}\rangle| &  \leq|c\int_{\mathbb{T}_{2\pi}%
}v_{k}(x)\overline{v_{k}(x)}\,dx|+|\int_{\mathbb{T}_{2\pi}}v_{k}%
(x)\overline{f^{\prime}(u_{c})v_{k}(x)}\,dx|\\
&  \leq(c+\Vert f^{\prime}(u_{c})\Vert_{L^{\infty}(\mathbb{T}_{2\pi})})\Vert
v_{k}(x)\Vert_{L^{2}(\mathbb{T}_{2\pi})}^{2},
\end{align*}
Applying $\mathcal{M}_{k}\ $to (\ref{egv-Jk-Lk}), we obtain
\begin{equation}
\mathcal{M}_{k}J_{k}L_{k}v_{k}=\lambda\mathcal{M}_{k}v_{k}.\label{egv2}%
\end{equation}
Taking the real part of inner product of (\ref{egv2}) with $L_{k}v_{k}$, one
has
\[
\operatorname{Re}\langle\mathcal{M}_{k}J_{k}L_{k}v_{k},L_{k}v_{k}%
\rangle=\operatorname{Re}\langle\lambda\mathcal{M}_{k}v_{k},L_{k}v_{k}\rangle.
\]
Note that $\mathcal{M}_{k}$ is self-adjoint and $J_{k}$ is skew-adjoint, also
$\mathcal{M}_{k}$ and $J_{k}$ are commutable, therefore $\mathcal{M}_{k}J_{k}$
is skew-adjoint. It follows that
\[
\operatorname{Re}\langle\mathcal{M}_{k}J_{k}L_{k}v_{k},L_{k}v_{k}\rangle=0,
\]
which implies
\[
\operatorname{Re}\langle\lambda\mathcal{M}_{k}v_{k},L_{k}v_{k}\rangle=0.
\]
Then, we obtain
\[
(\operatorname{Re}\lambda)\langle\mathcal{M}_{k}v_{k},\mathcal{M}_{k}%
v_{k}\rangle=\operatorname{Re}\langle\lambda\mathcal{M}_{k}v_{k},\left(
c-f^{\prime}(u_{c})\right)  v_{k}\rangle,
\]
which implies that
\[
\Vert\mathcal{M}_{k}v_{k}\Vert_{L^{2}(\mathbb{T}_{2\pi})}^{2}\lesssim(c+\Vert
f^{\prime}(u_{c})\Vert_{C^{\left[  \frac{m}{2}\right]  +1}(\mathbb{T}_{2\pi}%
)})\Vert\mathcal{M}_{k}^{\frac{1}{2}}v_{k}\Vert_{L^{2}(\mathbb{T}_{2\pi})}%
^{2},
\]
and
\[
\Vert v_{k}\Vert_{H^{m}(\mathbb{T}_{2\pi})}\lesssim|\langle\mathcal{M}%
_{k}v_{k},v_{k}\rangle|\lesssim\Vert v_{k}\Vert_{L^{2}(\mathbb{T}_{2\pi}%
)}.\newline%
\]
In the above, we use the estimate
\begin{align*}
\left\vert \langle\mathcal{M}_{k}v_{k},f^{\prime}(u_{c})v_{k}\rangle
\right\vert  &  \leq\left\Vert \mathcal{M}_{k}^{\frac{1}{2}}\left(  f^{\prime
}(u_{c})v_{k}\right)  \right\Vert _{L^{2}}\Vert\mathcal{M}_{k}^{\frac{1}{2}%
}v_{k}\Vert_{L^{2}}\\
&  \leq\left\Vert f^{\prime}(u_{c})v_{k}\right\Vert _{H^{\frac{m}{2}%
}(\mathbb{T}_{2\pi})}\Vert\mathcal{M}_{k}^{\frac{1}{2}}v_{k}\Vert_{L^{2}}\\
&  \leq\Vert f^{\prime}(u_{c})\Vert_{C^{\left[  \frac{m}{2}\right]
+1}(\mathbb{T}_{2\pi})}\left\Vert v_{k}\right\Vert _{H^{\frac{m}{2}%
}(\mathbb{T}_{2\pi})}\Vert\mathcal{M}_{k}^{\frac{1}{2}}v_{k}\Vert_{L^{2}}\\
&  \leq\Vert f^{\prime}(u_{c})\Vert_{C^{\left[  \frac{m}{2}\right]
+1}(\mathbb{T}_{2\pi})}\Vert\mathcal{M}_{k}^{\frac{1}{2}}v_{k}\Vert
_{L^{2}(\mathbb{T}_{2\pi})}^{2}.
\end{align*}

Similarly, one can show that
\[
\Vert v_{k}\Vert_{H^{s}(\mathbb{T}_{2\pi})}\leq C(s)\Vert v_{k}\Vert
_{L^{2}(\mathbb{T}_{2\pi})}%
\]
for any $s\in\mathbb{N}$.
\end{proof}

In the next Lemma, we prove the regularity of unstable eigenfunctions when
$\mathcal{M}$ is a smoothing operator satisfying (\ref{symbol-whitham}) as in
the case of Whitham equation. We need the following assumption on the periodic
TWs of (\ref{kdvtp}):
\begin{equation}
c-\Vert f^{\prime}(u_{c})\Vert_{L^{\infty}(\mathbb{T}_{2\pi})}\geqslant
\delta_{0}>0, \label{assumption-TW-whitham}%
\end{equation}
which was assumed in \cite{hur-johnson-whitham} and \cite{solitary-whitham} to
show the regularity of TWs of Whitham equation. This assumption is satisfied
for small amplitude waves of Whitham equation (see Section
\ref{subsection-whitham}). By a similar proof as in \cite{hur-johnson-whitham}
and \cite{solitary-whitham}, we can show that $u_{c}\in C^{\infty}$ under the
assumption (\ref{assumption-TW-whitham}) when $f\in C^{\infty}\left(
\mathbf{R}\right)  .$

\begin{lemma}
\label{lem-eigenfunction-whitham} Assume (\ref{symbol-whitham}) and
(\ref{assumption-TW-whitham}). If \thinspace$f\in C^{\infty}\left(
\mathbf{R}\right)  $ and $v_{k}\in L^{2}(\mathbb{T}_{2\pi})$ is an unstable
eigenfunction of $J_{k}L_{k}$ with $k\in\left[  0,1\right]  $, then $v_{k}\in
H^{s}(\mathbb{T}_{2\pi})$ for every $s\in\mathbb{N}$.
\end{lemma}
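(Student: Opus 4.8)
The plan is to adapt the argument of Lemma \ref{lem-eigenfunction-kdv}, but now the roles of the two operators in $L_k$ are reversed: since $\mathcal{M}$ is smoothing (\ref{symbol-whitham}), it is the multiplication operator $c-f'(u_c)$ rather than $\mathcal{M}_k$ that supplies positivity and derivative control, and the assumption (\ref{assumption-TW-whitham}) is precisely what guarantees $c-f'(u_c)\geq\delta_0>0$ as a bounded, bounded-below multiplication operator. As before, $J_k$ is skew-adjoint, $L_k=\mathcal{M}_k-c+f'(u_c)$ is self-adjoint, and $\mathcal{M}_k$ and $J_k$ commute (both Fourier multipliers in the periodic setting after the Bloch shift). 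The starting point is the same ``conservation law'': taking the real part of $\langle J_kL_kv_k,L_kv_k\rangle=\langle\lambda v_k,L_kv_k\rangle$ and using $\mathrm{Re}\,\lambda>0$ gives $\langle v_k,L_kv_k\rangle=0$, i.e.
\[
\langle (c-f'(u_c))v_k,v_k\rangle=\langle \mathcal{M}_kv_k,v_k\rangle\geq 0,
\]
and since $\mathcal{M}_k\geq 0$ this already yields the two-sided bound $\|v_k\|_{L^2}^2\lesssim\langle(c-f'(u_c))v_k,v_k\rangle=\langle\mathcal{M}_kv_k,v_k\rangle\leq C\|v_k\|_{L^2}^2$, so in particular $\mathcal{M}_k^{1/2}v_k\in L^2$ with norm comparable to $\|v_k\|_{L^2}$. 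This is the base case; note it does not yet gain regularity because $\mathcal{M}_k$ is smoothing, so controlling $\mathcal{M}_k^{1/2}v_k$ in $L^2$ says nothing about derivatives of $v_k$.

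To bootstrap regularity I would apply a differential operator to the eigenvalue equation and pair against $L_kv_k$, exactly mirroring the step (\ref{egv2}) in the previous lemma but with $\langle\,\cdot\,\rangle_{H^s}$ replaced by a suitable positive self-adjoint operator that commutes with $J_k$. Concretely, for a Fourier multiplier $P=P(\partial_x)\geq 0$ commuting with $\partial_x$ (hence with $J_k$), apply $P$ to $J_kL_kv_k=\lambda v_k$ and take the real part of the inner product with $PL_kv_k$; since $PJ_k$ is skew-adjoint one gets $\mathrm{Re}\langle P J_k L_k v_k, PL_kv_k\rangle = 0$... wait — more carefully, one writes $\langle PJ_kL_kv_k,PL_kv_k\rangle = \langle J_k(PL_kv_k),PL_kv_k\rangle$ which has zero real part, and equate with $\langle \lambda Pv_k,PL_kv_k\rangle$. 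Expanding $L_k=\mathcal{M}_k-c+f'(u_c)$ and moving the ``good'' part $(c-f'(u_c))$ to the left yields
\[
\mathrm{Re}\,\lambda\,\langle (c-f'(u_c))Pv_k,Pv_k\rangle
= \mathrm{Re}\,\lambda\,\langle \mathcal{M}_kPv_k,Pv_k\rangle
+ (\text{commutator and cross terms}),
\]
and using $c-f'(u_c)\geq\delta_0$, $\mathcal{M}_k\geq0$ one bounds $\|Pv_k\|_{L^2}^2$ by the lower-order/commutator terms. Choosing $P=\langle\partial_x\rangle^s$ (or iterating with $P=\partial_x$) and estimating the commutator $[\langle\partial_x\rangle^s,f'(u_c)]$, which is of order $s-1$, together with the cross term $\langle\mathcal{M}_k\langle\partial_x\rangle^s v_k,\langle\partial_x\rangle^s(c-f'(u_c))^{-1}\cdots\rangle$ — here one exploits that $\mathcal{M}_k$ is order $-m$, so $\mathcal{M}_k\langle\partial_x\rangle^s$ is order $s-m$ — closes an induction on $s$: assuming $v_k\in H^{s-1}$ one deduces $v_k\in H^{s}$. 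The smoothness of $u_c$ (which holds under (\ref{assumption-TW-whitham}) and $f\in C^\infty$, as recalled just before the lemma) ensures $f'(u_c)\in C^\infty$, so all the commutators are as regular as needed at every stage.

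The main obstacle I anticipate is the bookkeeping of the commutator and cross terms at each inductive step — making sure that when $P=\langle\partial_x\rangle^s$ is inserted, every term on the right-hand side is genuinely controlled by $\|v_k\|_{H^{s-1}}$ (the inductive hypothesis) and a small multiple of $\|v_k\|_{H^s}$ that can be absorbed, using $\delta_0>0$ and the boundedness of $\alpha(\xi)$ from (\ref{symbol-whitham}) to handle the $\mathcal{M}_k$-terms (which, being smoothing, are actually harmless), and the algebra/commutator estimates in $H^s(\mathbb{T}_{2\pi})$ for the multiplier $f'(u_c)$. Since $\mathcal{M}$ only smooths, there is no derivative loss to fight — unlike the situation in the nonlinear problem — so the argument should be a clean positivity-plus-induction scheme; the only care needed is that the ``positive quantity'' being propagated is $\langle(c-f'(u_c))Pv_k,Pv_k\rangle$ rather than an $\mathcal{M}_k$-weighted norm, which is why (\ref{assumption-TW-whitham}) is indispensable. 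One then concludes $v_k\in H^s(\mathbb{T}_{2\pi})$ for every $s\in\mathbb{N}$, as claimed.
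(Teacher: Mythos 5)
Your proposal is essentially correct, but it follows a genuinely different route from the paper. The paper does not run an energy/commutator induction on $v_k$ at all. Instead, it first proves the resolvent bound (\ref{claim-bound-D}): with $D=-(\partial_x+ik)\left(c-f^{\prime}(u_c)\right)$, the operator $(\lambda+D)^{-1}D$ is bounded on every $H^{s}(\mathbb{T}_{2\pi})$. The point is that $D$ is skew-adjoint in the weighted inner product $[u,v]=\langle u,(c-f^{\prime}(u_c))v\rangle$, which is equivalent to $L^{2}$ precisely because of (\ref{assumption-TW-whitham}), so $\Vert(\lambda+D)u\Vert\gtrsim(\operatorname{Re}\lambda)\Vert u\Vert$, $(\lambda+D)^{-1}$ maps $L^{2}$ into $H^{1}$, and the $H^{s}$ bounds follow by induction with commutators applied to a given datum $\phi$, not to $v_k$. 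The eigenvalue equation is then recast as the fixed-point identity (\ref{vmv}), $v_k=B\mathcal{M}_k v_k$ with $B=(\lambda+D)^{-1}D(c-f^{\prime}(u_c))^{-1}$ bounded on each $H^{s}$, and iterating $v_k=(B\mathcal{M}_k)^{n}v_k$ gains $m$ derivatives per step. What this buys is that every step acts on a quantity already known to be smoother ($\mathcal{M}_k v_k$), so no a priori finiteness of any high norm of $v_k$ is ever assumed. Your scheme, by contrast, transplants the quadratic-form argument of Lemma \ref{lem-eigenfunction-kdv} with the roles of $\mathcal{M}_k$ and $c-f^{\prime}(u_c)$ exchanged, and gains one derivative per step through G\aa rding-type positivity of $c-f^{\prime}(u_c)$ plus commutator estimates; both proofs use (\ref{assumption-TW-whitham}) in exactly the same structural place.

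Two points in your plan need more care than you give them. First, the term $\langle\mathcal{M}_k Pv_k,Pv_k\rangle$ ends up on the side that must be bounded from above, so the positivity of $\mathcal{M}_k$ does not help there; a bound by $\sup\alpha\cdot\Vert Pv_k\Vert_{L^{2}}^{2}$ is absorbable only if $\sup\alpha<\delta_{0}$, and the bound by $\Vert v_k\Vert_{H^{s-m/2}}^{2}$ closes your stated induction $H^{s-1}\Rightarrow H^{s}$ only when $m\geq2$ (for the Whitham symbol $m=\tfrac12$). You should either induct in increments of $m/2$, or use the decay $\alpha(\xi)\rightarrow0$ from (\ref{symbol-whitham}) to split frequencies and write $\langle\mathcal{M}_k Pv_k,Pv_k\rangle\leq\epsilon\Vert Pv_k\Vert_{L^{2}}^{2}+C(\epsilon,s)\Vert v_k\Vert_{L^{2}}^{2}$; it is the decay at infinity, not mere boundedness of $\alpha$, that makes this term harmless. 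Second, the pairings $\langle\lambda Pv_k,PL_kv_k\rangle$ are only formal when $v_k$ is merely known to lie in $H^{s-1}$, since then $Pv_k$ need not be in $L^{2}$; to make the computation rigorous one should work with truncated multipliers $P_N$ (with commutator bounds uniform in $N$) and pass to the limit. Both are standard repairs, but they are exactly the technicalities that the paper's fixed-point identity avoids.
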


\begin{proof}
Step 1: We first prove that under the assumption (\ref{assumption-TW-whitham}%
), for any integer $s\geq0$, there exists a constant $C(s)$, such that for any
$\phi\in H^{s}(\mathbb{T}_{2\pi}),$
\begin{equation}
\Vert(\lambda+D)^{-1}D\phi\Vert_{H^{s}(\mathbb{T}_{2\pi})}\leq C(s)\Vert
\phi\Vert_{H^{s}(\mathbb{T}_{2\pi})},\label{claim-bound-D}%
\end{equation}
where $D=-(\partial_{x}+ik)\left(  c-f^{\prime}(u_{c})\right)  $.

Define an inner product $[\cdot,\cdot]$ by
\[
\lbrack u,v]=\langle u,(c-f^{\prime}(u_{c}))v\rangle.
\]
One can check that
\[
\lbrack Du,v]=-[u,Dv],
\]
i.e. $D$ is skew-adjoint in the inner product $[\cdot,\cdot]$. For any $u\in
Dom(D)=H^{1}(\mathbb{T}_{2\pi})$, denote $v=(\lambda+D)u\in L^{2}%
(\mathbb{T}_{2\pi})$, then one has
\[
|[v,u]|\geq|\operatorname{Re}[(\lambda+D)u,u]|=(\operatorname{Re}\lambda)[u,u]
\]
Also, by the Schwartz inequality, one has
\[
|[v,u]|\leq\lbrack v,v]^{1/2}[u,u]^{1/2}.
\]
It follows that
\[
\Vert(\lambda+D)u\Vert_{\bar{L}^{2}(\mathbb{T}_{2\pi})}\geq(\operatorname{Re}%
\lambda)\Vert u\Vert_{\bar{L}^{2}(\mathbb{T}_{2\pi})},
\]
where $\Vert\cdot\Vert_{{\bar{L}^{2}(\mathbb{T}_{2\pi})}}:=[\cdot,\cdot
]^{1/2}$. Note that $c-\Vert f^{\prime}(u_{c})\Vert_{L^{\infty}(\mathbb{T}%
_{2\pi})}\geq\delta_{0}>0$, so $\bar{L}^{2}(\mathbb{T}_{2\pi})\sim
L^{2}(\mathbb{T}_{2\pi})$. Thus, $\lambda+D$ is invertible and $(\lambda
+D)^{-1}$ is bounded from $L^{2}(\mathbb{T}_{2\pi})$ to $L^{2}(\mathbb{T}%
_{2\pi})$. Taking the inner product of the equation $v=(\lambda+D)u$ with
$Du$, we obtain
\[
\lbrack v,Du]=\lambda\lbrack u,Du]+[Du,Du],
\]
which implies that $\Vert Du\Vert_{L^{2}(\mathbb{T}_{2\pi})}\leq C\Vert
v\Vert_{L^{2}(\mathbb{T}_{2\pi})}$ for some constant $C$. Hence, we have that
$(\lambda+D)^{-1}$ is bounded from $L^{2}(\mathbb{T}_{2\pi})$ to
$H^{1}(\mathbb{T}_{2\pi})$, from which it follows immediately that
$(\lambda+D)^{-1}D$ is bounded from $H^{1}(\mathbb{T}_{2\pi})$ to
$H^{1}(\mathbb{T}_{2\pi})$. Also, $(\lambda+D)^{-1}D=I-\lambda(\lambda
+D)^{-1}$ is bounded from $L^{2}$ to $L^{2}$.

We now prove (\ref{claim-bound-D}) by induction. Suppose it is true for $0\leq
s\leq l$. Let $\psi=(\lambda+D)^{-1}D\phi$, then
\[
D\phi=(\lambda+D)\psi.
\]
From
\[
\partial_{x}^{l}\lambda\psi+\partial_{x}^{l}D\psi=\partial_{x}^{l}D\phi,
\]
we get
\[
\partial_{x}^{l}\psi=(\lambda+D)^{-1}[\partial_{x}^{l}D\phi+(D\partial_{x}%
^{l}-\partial_{x}^{l}D)\psi].
\]
It is easy to check the following commutator estimate
\[
\Vert(D\partial_{x}^{l}-\partial_{x}^{l}D)\psi\Vert_{L^{2}}\leq C(l)\Vert
\psi\Vert_{H^{l}}.
\]
Therefore,
\[
\Vert(\lambda+D)^{-1}(D\partial_{x}^{l}-\partial_{x}^{l}D)\psi\Vert_{H^{1}%
}\leq C(s)\Vert\psi\Vert_{H^{l}}.
\]
Also,
\begin{align*}
&  \Vert(\lambda+D)^{-1}\partial_{x}^{l}D\phi\Vert_{H^{1}}\\
\leq &  \Vert(\lambda+D)^{-1}D\partial_{x}^{l}\phi\Vert_{H^{1}}+\Vert
(\lambda+D)^{-1}\left(  \partial_{x}^{l}D-D\partial_{x}^{l}\right)  \phi
\Vert_{H^{1}}\\
\leq &  C(s)\left(  \Vert\partial_{x}^{l}\phi\Vert_{H^{1}}+\Vert\left(
\partial_{x}^{l}D-D\partial_{x}^{l}\right)  \phi\Vert_{L^{2}}\right)  \\
\leq &  C(s)\Vert\phi\Vert_{H^{l+1}}.
\end{align*}
Thus, we obtain
\[
\Vert\partial_{x}^{l}\psi\Vert_{H^{1}}\leq C(s)(\Vert\phi\Vert_{H^{l+1}}%
+\Vert\psi\Vert_{H^{l}})\leq C\left(  s\right)  \Vert\phi\Vert_{H^{l+1}},
\]
by the induction assumption. This finishes the proof of (\ref{claim-bound-D}).

Step 2: From (\ref{egv2}), one has
\begin{equation}
v_{k}(x)=(\lambda+D)^{-1}D\left(  c-f^{\prime}(u_{c})\right)  ^{-1}%
\mathcal{M}_{k}v_{k}(x). \label{vmv}%
\end{equation}
Let $B=(\lambda+D)^{-1}D(c-f^{\prime}(u_{c}))^{-1}$, then we have
$v_{k}(x)=B\mathcal{M}_{k}v_{k}(x)$. Since $f\left(  u_{c}\right)  \in
C^{\infty}$ and $c-f^{\prime}(u_{c})\geqslant\delta_{0}>0$, by
(\ref{claim-bound-D}) $B$ is bounded from $H^{s}(\mathbb{T}_{2\pi})$ to
$H^{s}(\mathbb{T}_{2\pi})$, for any integer $s\geq0$. By using the
interpolation theory, $B$ is bounded from $H^{s}(\mathbb{T}_{2\pi})$ to
$H^{s}(\mathbb{T}_{2\pi})$ for any $s\geq0$. So we have
\[
\Vert B\mathcal{M}_{k}v_{k}(x)\Vert_{H^{m}(\mathbb{T}_{2\pi})}\leq C\left\Vert
\mathcal{M}_{k}v_{k}(x)\right\Vert _{H^{m}(\mathbb{T}_{2\pi})}\leq C\Vert
v_{k}(x)\Vert_{L^{2}(\mathbb{T}_{2\pi})}.
\]
Repeatedly using the identity $v_{k}(x)=B\mathcal{M}_{k}v_{k}(x)$, we arrive
at
\[
v_{k}(x)=B\mathcal{M}_{k}v_{k}(x)=B\mathcal{M}_{k}B\mathcal{M}_{k}%
v_{k}(x)=\cdots=(B\mathcal{M}_{k})^{n}v_{k}(x),
\]
which implies that
\[
\Vert v_{k}(x)\Vert_{H^{nm}(\mathbb{T}_{2\pi})}\leq C(n)\Vert v_{k}%
(x)\Vert_{L^{2}(\mathbb{T}_{2\pi})}.
\]
Since $n\in\mathbb{N}$ is arbitrary, this finishes the proof of the lemma.
\end{proof}

\section{Semigroup estimates}

In this section, we consider semigroup estimates for $e^{tJL}$ (equivalently,
for the linearized equation (\ref{kdvln})), which will be used in later
sections to prove nonlinear instability. First, we consider the estimates in
both multi-periodic space $H^{s}\left(  {\mathbb{T}_{2\pi q}}\right)  \ $and
localized spaces $H^{s}\left(  \mathbf{R}\right)  $, with $s\geq\frac{m}{2}$.
Such estimates are given in Section 11.4 of \cite{lin-zeng-Hamiltonian} and we
only sketch it here. It is obtained by using the theory in
(\cite{lin-zeng-Hamiltonian}) for general linear Hamiltonian PDEs which we
describe below. Consider a linear Hamiltonian system
\begin{equation}
\partial_{t}u=JLu,\ u\in X,\nonumber
\end{equation}
where $X$ is a Hilbert space. Assume that:

\textbf{(H1)} $J:X^{\ast}\rightarrow X$ is a skew-adjoint operator.

\textbf{(H2)} The operator $L:X\rightarrow X^{\ast}$ generates a bounded
bilinear symmetric form $\left\langle L\cdot,\cdot\right\rangle $ on $X$%
.$\ $There exists a decomposition $X=X_{-}\oplus\ker L\oplus X_{+}$ satisfying
that $\left\langle L\cdot,\cdot\right\rangle |_{X_{-}}<0,\ \dim X_{-}%
=n^{-}\left(  L\right)  <\infty$, and there exists $\delta_{1}>0$ such that%

\[
\left\langle Lu,u\right\rangle \geq\delta_{1}\left\Vert u\right\Vert _{X}%
^{2}\ ,\text{ for any }u\in X_{+}.
\]

\textbf{(H3)} The above $X_{\pm}$ satisfy
\[
\ker i_{X_{+}\oplus X_{-}}^{\ast}=\{f\in X^{\ast}\mid\langle f,u\rangle
=0,\,\forall u\in X_{-}\oplus X_{+}\}\subset D(J),
\]
where $i_{X_{+}\oplus X_{-}}^{\ast}:X^{\ast}\rightarrow(X_{+}\oplus
X_{-})^{\ast}$ is the dual operator of the embedding $i_{X_{+}\oplus X_{-}}$.

The assumption \textbf{(H3)} is automatically satisfied when $\dim\ker
L<\infty$, as in the case of this paper.

\begin{theorem}
\cite{lin-zeng-Hamiltonian}\label{theorem-dichotomy} Under assumptions
(\textbf{H1})-(\textbf{H3}), $JL$ generates a $C^{0}$ group $e^{tJL}$ of
bounded linear operators on $X$ and there exists a decomposition%
\[
X=E^{u}\oplus E^{c}\oplus E^{s},\quad\dim E^{u}=\dim E^{s}\leq n^{-}(L)
\]
satisfying: \newline i) $E^{u},E^{s}$ and $E^{c}$ are invariant under
$e^{tJL}$; \newline ii) $E^{c}=\{u\in X\mid\langle Lu,v\rangle=0,\ \forall
v\in E^{s}\oplus E^{u}\}$;\newline iii) let $\lambda_{u}=\max
\{\operatorname{Re}\lambda\mid\lambda\in\sigma(JL|_{E^{u}})\}$, there exist
$M>0$ and an integer $k_{0}\geq0$, such that
\begin{equation}%
\begin{split}
&  \left\vert e^{tJL}|_{E^{s}}\right\vert _{X}\leq M(1+t^{\dim E^{s}%
-1})e^{-\lambda_{u}t},\quad\forall\;t\geq0;\ \\
&  |e^{tJL}|_{E^{u}}|_{X}\leq M(1+|t|^{\dim E^{u}-1})e^{\lambda_{u}t}%
,\quad\forall\;t\leq0,
\end{split}
\label{estimate-stable.unstable}%
\end{equation}%
\begin{equation}
\ |e^{tJL}|_{E^{c}}|_{X}\leq M(1+\left\vert t\right\vert ^{k_{0}}%
),\quad\forall\;t\in\mathbf{R}, \label{estimate-center}%
\end{equation}
and
\[
k_{0}\leq1+2\left(  n^{-}(L)-\dim E^{u}\right)  ;
\]
Moreover, for $k\geq1$, define the space $X^{k}\subset X$ to be
\[
X^{k}=D\left(  (JL)^{k}\right)  =\left\{  u\in X\ |\ \left(  JL\right)
^{n}u\in X,\ n=1,\cdots,k.\right\}
\]
and
\begin{equation}
\left\Vert u\right\Vert _{X^{k}}=\left\Vert u\right\Vert +\left\Vert
JLu\right\Vert +\cdots+\Vert(JL)^{k}u\Vert. \label{norm-X-k}%
\end{equation}
Assume $E^{u,s}\subset X^{k}$, then the exponential trichotomy for $X^{k}$
holds true: $X^{k}$ is decomposed as a direct sum%
\[
X^{k}=E^{u}\oplus E_{k}^{c}\oplus E^{s},\ E_{k}^{c}=E^{c}\cap X^{k}%
\]
and the estimates (\ref{estimate-stable.unstable}) and (\ref{estimate-center})
still hold in the norm $X^{k}$.
\end{theorem}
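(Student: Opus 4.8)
The plan is to follow the finite-dimensional reduction strategy underlying the Hamiltonian index theorem: isolate the (necessarily finite-dimensional) part of $X$ on which $\langle L\cdot,\cdot\rangle$ fails to be positive, carry out the spectral analysis of $JL$ there, and show that on the $L$-orthogonal complement the flow is essentially an isometry in the energy inner product. The starting observation is that the energy $\langle Lu(t),u(t)\rangle$ is formally conserved, since $\tfrac{d}{dt}\langle Lu,u\rangle=2\langle Lu,JLu\rangle=0$ by skew-adjointness of $J$; more precisely, $JL$ is skew with respect to $\langle L\cdot,\cdot\rangle$, i.e. $\langle L(JLx),y\rangle+\langle Lx,JLy\rangle=0$, so the group $e^{tJL}$ (once we know it exists) preserves the bilinear form $\langle L\cdot,\cdot\rangle$.

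First I would show $JL$ generates a $C^{0}$ group. Using (H2)--(H3) I would replace $L$ by an equivalent inner product $[\cdot,\cdot]$ on $X$ that coincides with $\langle L\cdot,\cdot\rangle$ on $X_{+}$ and is positive definite on the finite-dimensional piece $X_{-}\oplus\ker L$; the modification of $L$ is then a bounded, finite-rank operator, and (H3) is precisely what makes the correction land in $D(J)$. With respect to $[\cdot,\cdot]$ one then writes $JL=\mathcal S+\mathcal B$ with $\mathcal S$ skew-adjoint (hence the generator of a unitary group) and $\mathcal B$ bounded, so Phillips' bounded-perturbation theorem yields a $C^{0}$ group with $|e^{tJL}|_{X}\le Ce^{b|t|}$.

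Next I would establish the hyperbolic part. Since $L$ is bounded and $J$ skew, $\sigma(JL)$ is symmetric under $\lambda\mapsto-\bar\lambda$, so the unstable and stable spectra are mirror images. Pairing generalized eigenvectors for $\lambda$ with those for $-\bar\lambda$ via $\langle L\cdot,\cdot\rangle$ shows that the unstable spectrum $\sigma(JL)\cap\{\operatorname{Re}\lambda>0\}$ consists of finitely many eigenvalues of finite total algebraic multiplicity, that $\langle L\cdot,\cdot\rangle$ restricted to $E^{u}\oplus E^{s}$ is nondegenerate, and that it carries exactly $\dim E^{u}$ negative directions; hence $\dim E^{u}=\dim E^{s}\le n^{-}(L)$ (were $\langle L\cdot,\cdot\rangle$ nonnegative on $E^{u}$, invariance plus energy conservation would forbid exponential growth). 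Defining $E^{u},E^{s}$ as these generalized eigenspaces and $E^{c}=\{u\in X:\langle Lu,v\rangle=0\ \forall v\in E^{u}\oplus E^{s}\}$, nondegeneracy gives $X=E^{u}\oplus E^{c}\oplus E^{s}$; invariance of $E^{c}$ follows from invariance of $E^{u}\oplus E^{s}$ together with the preservation of $\langle L\cdot,\cdot\rangle$ by $e^{tJL}$. On the finite-dimensional $E^{u},E^{s}$ the bounds (\ref{estimate-stable.unstable}) are the standard Jordan-form ODE estimates, with block size at most $\dim E^{u}$.

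The main obstacle is the polynomial bound (\ref{estimate-center}) on $E^{c}$ with the explicit exponent $k_{0}\le 1+2(n^{-}(L)-\dim E^{u})$. There $\sigma(JL|_{E^{c}})$ is purely imaginary and $\langle L\cdot,\cdot\rangle|_{E^{c}}$ has precisely $k_{-}:=n^{-}(L)-\dim E^{u}$ negative directions, so $JL|_{E^{c}}$ is self-adjoint with respect to an indefinite form with finitely many negative squares. Invoking the Pontryagin-space structure theory for such operators, I would split $E^{c}=E^{c}_{0}\oplus_{L}E^{c}_{+}$, where $\langle L\cdot,\cdot\rangle$ is positive definite on $E^{c}_{+}$ — hence equivalent to $\|\cdot\|_{X}$, so $e^{tJL}|_{E^{c}_{+}}$ is an energy-isometry and bounded in $X$ — and $E^{c}_{0}$ is finite-dimensional, invariant, and absorbs the generalized kernel of $JL$ and all the indefinite Jordan blocks at nonzero imaginary eigenvalues, the block sizes being controlled so that the growth exponent on $E^{c}_{0}$ is at most $1+2k_{-}$. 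The delicate bookkeeping is showing that each unit of negative index can "pay for" only a bounded Jordan block. Finally, the $X^{k}$ statement is essentially free: $(JL)^{j}$ commutes with $e^{tJL}$, the finite-dimensional $E^{u},E^{s}$ lie in $X^{k}$ automatically, $E^{c}_{k}=E^{c}\cap X^{k}$ is invariant, and the $X^{k}$-norm of $e^{tJL}u$ is bounded by the $X$-estimates applied to $u,JLu,\dots,(JL)^{k}u$.
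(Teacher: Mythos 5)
This theorem is not proved in the paper at all: it is quoted from \cite{lin-zeng-Hamiltonian}, so there is no in-paper argument to compare against, and your proposal has to be judged against the proof in that reference. At the level of strategy you are on the same track: conservation of $\langle L\cdot,\cdot\rangle$ under $e^{tJL}$, generation of the group by writing $JL=J(L+K)-JK$ with $L+K$ uniformly positive and $JK$ a bounded finite-rank correction (this is indeed where (H3) enters), the observation that $\langle L\cdot,\cdot\rangle$ vanishes identically on $E^{u}$ and on $E^{s}$ and pairs them nondegenerately, whence $\dim E^{u}=\dim E^{s}\leq n^{-}(L)$, and indefinite-inner-product (Pontryagin) theory for the rest. (One small slip: on $E^{u}$ the form is identically zero, so the parenthetical ``were $\langle L\cdot,\cdot\rangle$ nonnegative on $E^{u}$\dots'' is not the mechanism; the counting comes from the $E^{u}$--$E^{s}$ pairing.)

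The genuine gaps sit exactly at the two hardest points. First, your pairing argument presupposes that $\sigma(JL)\cap\{\operatorname{Re}\lambda>0\}$ is a finite set of isolated eigenvalues of finite algebraic multiplicity whose generalized eigenspaces span the unstable spectral subspace; for an unbounded, non-self-adjoint generator this is not automatic, and in \cite{lin-zeng-Hamiltonian} it is extracted from the Pontryagin invariant-subspace theorem and their structural decomposition, not assumed. Second, and more seriously, the center bound with the explicit exponent $k_{0}\leq1+2\left(n^{-}(L)-\dim E^{u}\right)$ is the core of the theorem, and the splitting you propose, $E^{c}=E^{c}_{0}\oplus_{L}E^{c}_{+}$ with both summands invariant, $E^{c}_{0}$ finite-dimensional and $E^{c}_{+}$ positive definite, does not exist in general: $\langle L\cdot,\cdot\rangle|_{E^{c}}$ typically has isotropic directions (e.g. $\ker L$ and the generalized kernel of $JL$, which need not be finite-dimensional in the generality of the theorem), and a maximal non-positive invariant subspace need not admit an invariant complement. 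What one actually obtains is an invariant filtration $W\subset W^{\perp_{L}}\subset E^{c}$ and a block upper-triangular structure; the off-diagonal couplings in that triangular form are precisely what produce the polynomial growth and the extra ``$+1$'' in the exponent (already visible in the secular growth generated by the Jordan block through $\ker L$). Converting this into the stated bound is the content of the structural decomposition theorem in \cite{lin-zeng-Hamiltonian}, which your ``delicate bookkeeping'' defers rather than supplies. The remaining items (invariance of $E^{c}$, the finite-dimensional ODE estimates on $E^{u,s}$, and the $X^{k}$ statement by commuting $(JL)^{j}$ through the group) are fine.
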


By using above Theorem, we can prove the following estimates for the
linearized equation (\ref{kdvln}).

\begin{lemma}
\label{lemma-trichotomy}Consider the semigroup $e^{tJL}$ associated with the
solutions of (\ref{kdvln}), where $J,L$ are given in (\ref{definition-J-L}).

i) (KDV type) Assume (\ref{symbol-KDV}), the exponential trichotomy in the
sense of (\ref{estimate-stable.unstable}) and (\ref{estimate-center}) holds
true in the spaces $H^{s}\left(  {\mathbb{T}_{2\pi q}}\right)  $ $\left(
s\geq\frac{m}{2},q\in\mathbb{N}\right)  $

ii) (Whitham type) Assume (\ref{symbol-whitham}) and
(\ref{assumption-TW-whitham}), then the exponential trichotomy of $e^{tJL}$
holds true in the spaces $H^{s}\left(  {\mathbb{T}_{2\pi q}}\right)  $
$\left(  s\geq0,q\in\mathbb{N}\right)  .$
\end{lemma}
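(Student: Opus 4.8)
The plan is, in both cases, to fit the problem into the abstract framework of Theorem \ref{theorem-dichotomy}: pick a base Hilbert space $X$, verify hypotheses (H1)--(H3) for an anti-self-adjoint/self-adjoint pair that generates $e^{tJL}$, read off the trichotomy in $X$, and then move up the Sobolev scale using the $X^{k}=D((JL)^{k})$ part of the same theorem together with interpolation. The subspaces $E^{u},E^{s}$ obtained this way are the same for every $s$ (they are finite dimensional and, as noted below, smooth), and $E^{c}_{s}=E^{c}\cap H^{s}$.

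\textbf{Case (i), KDV type.} Here I would keep $J=-\partial_{x}$, $L=\mathcal{M}-c+f^{\prime}(u_{c})$ and take $X=H^{m/2}({\mathbb{T}_{2\pi q}})$; this is essentially the computation of Section 11.4 of \cite{lin-zeng-Hamiltonian}. Since $\alpha$ is even, positive, regular at $0$ and comparable to $(1+|\xi|^{2})^{m/2}$ by (\ref{symbol-KDV}), the form $\langle L\cdot,\cdot\rangle$ is bounded on $X$ and satisfies $\langle Lu,u\rangle\geq c\Vert u\Vert_{H^{m/2}}^{2}-C\Vert u\Vert_{L^{2}}^{2}$. Viewed as a self-adjoint operator on $L^{2}({\mathbb{T}_{2\pi q}})$ with form domain $H^{m/2}$, $L$ has compact resolvent, hence discrete spectrum tending to $+\infty$ and only finitely many nonpositive eigenvalues; take $X_{-}$, $\ker L$ to be the spans of its negative, resp. zero, eigenfunctions and $X_{+}$ the $L^{2}$-orthogonal complement of $X_{-}\oplus\ker L$ in $X$. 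On $X_{+}$ one has $\langle Lu,u\rangle\geq\mu_{\ast}\Vert u\Vert_{L^{2}}^{2}$ with $\mu_{\ast}>0$ the smallest positive eigenvalue; combining this with $\langle Lu,u\rangle\geq c\Vert u\Vert_{H^{m/2}}^{2}-C\Vert u\Vert_{L^{2}}^{2}$ in a convex combination gives $\langle Lu,u\rangle\geq\delta_{1}\Vert u\Vert_{H^{m/2}}^{2}$ on $X_{+}$, which is (H2). (H1) holds because $-\partial_{x}$ is skew-adjoint as a densely defined operator $X^{\ast}=H^{-m/2}\to X$, and (H3) is automatic since $\dim\ker L<\infty$. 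Theorem \ref{theorem-dichotomy} then yields the trichotomy in $H^{m/2}$.

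\textbf{Case (ii), Whitham type.} Now $\mathcal{M}$ is smoothing, so $\langle L\cdot,\cdot\rangle$ is negative on an infinite-dimensional high-frequency subspace and Theorem \ref{theorem-dichotomy} does not apply to $(J,L)$ as written. The point is to use the conjugate Hamiltonian pair: $e^{tJL}=e^{t\hat{J}\hat{L}}$ with $\hat{J}=-J=\partial_{x}$ and $\hat{L}=-L=(c-f^{\prime}(u_{c}))-\mathcal{M}$, and take $X=L^{2}({\mathbb{T}_{2\pi q}})$. This is where (\ref{assumption-TW-whitham}) is used: by (\ref{symbol-whitham}) the symbol $\alpha$ decays at infinity, so $\mathcal{M}$ is a bounded, compact, nonnegative operator on $L^{2}({\mathbb{T}_{2\pi q}})$, while multiplication by $c-f^{\prime}(u_{c})$ is bounded and bounded below by $\delta_{0}>0$; hence $\hat{L}$ is bounded, self-adjoint, with $\sigma_{\mathrm{ess}}(\hat{L})=\sigma_{\mathrm{ess}}(c-f^{\prime}(u_{c}))\subset[\delta_{0},\infty)$, so it has only finitely many nonpositive eigenvalues and $\langle\hat{L}\cdot,\cdot\rangle$ is bounded and $L^{2}$-coercive on its positive spectral subspace $X_{+}$. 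Thus (H1)--(H3) hold for $(\hat{J},\hat{L},X=L^{2})$, and Theorem \ref{theorem-dichotomy} gives the trichotomy in $L^{2}=H^{0}$.

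\textbf{Climbing to general $s$, and the main obstacle.} In both cases $E^{u},E^{s}$ are spanned by the (generalized) eigenfunctions of $JL$ for eigenvalues with nonzero real part. Writing such an eigenfunction on ${\mathbb{T}_{2\pi q}}$ in its Floquet--Bloch components $v_{j/q}\in L^{2}(\mathbb{T}_{2\pi})$, $j=0,\dots,q-1$, each nonzero component is (up to the sign convention for $J_{k}$) an eigenfunction of $J_{j/q}L_{j/q}$ with nonzero real part, hence lies in $\bigcap_{s}H^{s}(\mathbb{T}_{2\pi})$ by Lemma \ref{lem-eigenfunction-kdv} (case i) or Lemma \ref{lem-eigenfunction-whitham} (case ii); so $E^{u},E^{s}\subset C^{\infty}\subset X^{k}$ for all $k$. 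Moreover $JL=-\partial_{x}(\mathcal{M}-c+f^{\prime}(u_{c}))$ is elliptic of order $m+1$ in case (i) (leading symbol $-i\xi\alpha(\xi)$, of size $\approx|\xi|^{m+1}$) and of order $1$ in case (ii) (the leading part $-\partial_{x}(c-f^{\prime}(u_{c}))$ has elliptic symbol $i\xi(c-f^{\prime}(u_{c}))$ with $|c-f^{\prime}(u_{c})|\geq\delta_{0}$, while $-\partial_{x}\mathcal{M}$ is of lower order $1-m$). Hence $X^{k}=D((JL)^{k})$ is, with equivalent norm, $H^{m/2+k(m+1)}({\mathbb{T}_{2\pi q}})$ in case (i) and $H^{k}({\mathbb{T}_{2\pi q}})$ in case (ii); Theorem \ref{theorem-dichotomy} gives the trichotomy in each of these, and because the bounds in (\ref{estimate-stable.unstable})--(\ref{estimate-center}) are polynomial in $t$, interpolating between consecutive members of these ladders yields the trichotomy in $H^{s}({\mathbb{T}_{2\pi q}})$ for every real $s\geq m/2$, resp. $s\geq0$. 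The only step that is not routine is the reformulation in case (ii): recognizing that one has to pass to $(-J,-L)$ so that the self-adjoint factor has finite Morse index, and that (\ref{assumption-TW-whitham}) is exactly what forces the essential spectrum of $c-f^{\prime}(u_{c})-\mathcal{M}$ into $[\delta_{0},\infty)$; the norm equivalences $X^{k}\simeq H^{\cdots}$ and the interpolation are standard once the ellipticity of $JL$ is in hand.
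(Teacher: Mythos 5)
Your proposal is correct and follows essentially the same route as the paper: check (H2) for $(J,L)$ on $H^{m/2}({\mathbb{T}_{2\pi q}})$ in the KDV-type case and for $(-J,-L)$ on $L^{2}({\mathbb{T}_{2\pi q}})$ in the Whitham-type case (with (\ref{assumption-TW-whitham}) making $-L$ a compact perturbation of the positive operator $c-f^{\prime}(u_{c})$), then invoke Theorem \ref{theorem-dichotomy} together with the eigenfunction regularity of Lemmas \ref{lem-eigenfunction-kdv} and \ref{lem-eigenfunction-whitham} to pass to higher Sobolev norms. Your explicit identification of $X^{k}=D((JL)^{k})$ with Sobolev spaces and the interpolation in $s$ merely fill in details the paper leaves implicit.
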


\begin{proof}
It suffices to check the assumption (H2) in Theorem \ref{theorem-dichotomy},
since (H1) is obvious and (H3) is automatic due to $\dim\ker L<\infty$.

For i), the quadratic form $\left\langle L\cdot,\cdot\right\rangle $ is
bounded in the space $H^{\frac{m}{2}}({\mathbb{T}_{2\pi q}})$. The operator
$L$ is a compact perturbation of $\mathcal{M}$, whose spectrum in
$H^{m}\left(  {\mathbb{T}_{2\pi q}}\right)  $ are positive and discrete.
Therefore, $L$ has at most a finite number of negative eigenvalues, that is,
$n^{-}\left(  L\right)  <\infty$. Thus, the exponential trichotomy of
$e^{tJL}$ is true in $H^{\frac{m}{2}}({\mathbb{T}_{2\pi q}})$. By the proof of
Lemma \ref{lem-eigenfunction-kdv}, any stable or unstable eigenfunction of
$JL$ in $L^{2}({\mathbb{T}_{2\pi q}})$ lies in $H^{s}\left(  {\mathbb{T}_{2\pi
q}}\right)  $ for any $s>0$. Therefore, the exponential trichotomy of
$e^{tJL}$ is also true in $H^{s}\left(  {\mathbb{T}_{2\pi q}}\right)  $ for
any $s\geq\frac{m}{2}$.

For ii), the quadratic form $\left\langle L\cdot,\cdot\right\rangle $ is
bounded in the space $L^{2}({\mathbb{T}_{2\pi q}})$. Under the condition
(\ref{assumption-TW-whitham}), the operator $-L$ is a compact perturbation of
the positive operator $c-f^{\prime}\left(  u_{c}\right)  ,$ thus $n^{-}\left(
-L\right)  <\infty$. Applying Theorem \ref{theorem-dichotomy} to $JL=\left(
-J\right)  \left(  -L\right)  $, we get the exponential trichotomy of
$e^{tJL}$ in $L^{2}({\mathbb{T}_{2\pi q}})$ and in $H^{s}\left(
{\mathbb{T}_{2\pi q}}\right)  $ $\left(  s\geq0\right)  $ by the regularity of
stable and unstable eigenfunctions of $JL$ in $L^{2}({\mathbb{T}_{2\pi q}})$
as in Lemma \ref{lem-eigenfunction-whitham}.
\end{proof}

As an immediate corollary of the above lemma, we get the following upper bound
on the growth of the semigroup $e^{tJL}$.

\begin{corollary}
\label{cor-upperbound-semigroup}Let $\lambda_{0}$ be the growth rate of the
most unstable eigenvalue of $JL$. Then under the conditions of Lemma
\ref{lemma-trichotomy}, for any $\varepsilon>0$, there exists constant
$C_{\varepsilon}$ such that
\[
\left\Vert e^{tJL}\right\Vert _{H^{s}\left(  {\mathbb{T}_{2\pi q}}\right)
}\leq C_{\varepsilon}e^{\left(  \lambda_{0}+\varepsilon\right)  t},\text{ for
any }t>0,
\]
where $q\in\mathbb{N}$ , $s\geq s_{0}$ with $s_{0}=\frac{m}{2}$ for case i)
and $s_{0}=0$ for case ii).
\end{corollary}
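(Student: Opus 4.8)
The plan is to read the bound off directly from the exponential trichotomy of Lemma \ref{lemma-trichotomy}, applied with $X=H^{s}({\mathbb{T}_{2\pi q}})$, $s\geq s_{0}$. First I would record the structural facts that come with the trichotomy: the three summands $E^{u},E^{c},E^{s}$ are closed, $e^{tJL}$-invariant, and the associated projections $P^{u},P^{c},P^{s}$ are bounded on $X$, so that every $u\in X$ splits as $u=u^{u}+u^{c}+u^{s}$ with $\|u^{u}\|_{X}+\|u^{c}\|_{X}+\|u^{s}\|_{X}\leq C\|u\|_{X}$ for a constant $C=C(s,q)$. It then suffices to estimate $e^{tJL}$ on each summand for $t\geq0$ and add up.

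On the center space, (\ref{estimate-center}) gives $|e^{tJL}|_{E^{c}}|_{X}\leq M(1+|t|^{k_{0}})$; on the stable space, (\ref{estimate-stable.unstable}) gives $|e^{tJL}|_{E^{s}}|_{X}\leq M(1+t^{\dim E^{s}-1})e^{-\lambda_{u}t}\leq M(1+t^{\dim E^{s}-1})$ for $t\geq0$ since $\lambda_{u}>0$. Both of these are polynomially bounded in $t$, hence dominated by $C_{\varepsilon}e^{\varepsilon t}$ for any $\varepsilon>0$. On the unstable space, $E^{u}$ is finite dimensional and $JL|_{E^{u}}$ is a bounded operator whose spectrum has maximal real part $\lambda_{u}$; the elementary estimate for matrix exponentials yields $|e^{tJL}|_{E^{u}}|_{X}\leq M(1+t^{\dim E^{u}-1})e^{\lambda_{u}t}$ for $t\geq0$ as well (the forward-time analogue of the $t\leq0$ bound displayed in Theorem \ref{theorem-dichotomy}). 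Finally, because $E^{c}$ carries purely imaginary spectrum and $E^{s}$ carries spectrum with negative real part (part of the construction of the trichotomy; a spectral-inclusion argument rules out an eigenvalue of positive real part in $E^{c}$ or $E^{s}$ against the respective $t\to+\infty$ bounds), every eigenvalue of $JL$ with positive real part lies in $\sigma(JL|_{E^{u}})$, so $\lambda_{u}=\lambda_{0}$.

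Combining the three estimates, with $N=\max\{\dim E^{u}-1,\dim E^{s}-1,k_{0}\}$ we get for $t\geq0$
\[
\|e^{tJL}u\|_{X}\leq M(1+t^{N})e^{\lambda_{0}t}\|u^{u}\|_{X}+M(1+t^{N})\bigl(\|u^{c}\|_{X}+\|u^{s}\|_{X}\bigr),
\]
and since $\lambda_{0}>0$ each factor $(1+t^{N})e^{\lambda_{0}t}$ and $(1+t^{N})\leq(1+t^{N})e^{\lambda_{0}t}$ is absorbed into $C_{\varepsilon}e^{(\lambda_{0}+\varepsilon)t}$. Together with the uniform bound on the projections this gives $\|e^{tJL}\|_{H^{s}({\mathbb{T}_{2\pi q}})}\leq C_{\varepsilon}e^{(\lambda_{0}+\varepsilon)t}$, with $C_{\varepsilon}$ depending on $\varepsilon,s,q$. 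There is no real obstacle here — the corollary is essentially a bookkeeping restatement of the trichotomy — the only point deserving a word of care being that the $t\geq0$ bound on $E^{u}$ is not literally the inequality in Theorem \ref{theorem-dichotomy} but is immediate from finite-dimensionality of $E^{u}$ and the identification $\lambda_{u}=\lambda_{0}$.
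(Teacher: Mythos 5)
Your proposal is correct and follows exactly the route the paper intends: the paper states the corollary as an immediate consequence of the exponential trichotomy in Lemma \ref{lemma-trichotomy}, and your argument simply fills in the routine details (bounded projections onto $E^{u}\oplus E^{c}\oplus E^{s}$, the forward-time bound on the finite-dimensional $E^{u}$, and the identification $\lambda_{u}=\lambda_{0}$). No gaps; the bookkeeping is exactly what the paper leaves implicit.
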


The above semigroup estimate implies the following lemma for the inhomogeneous
equation. For convenience, we use $\mathbb{T}$ for $\mathbb{T}_{2\pi q}$.

\begin{lemma}
\label{semigestim} If $\Vert g\left(  t\right)  \Vert_{H^{s}(\mathbb{T}%
)}\leqslant C_{g}e^{wt}$, for some $s\geq s_{0}$ and $w>\lambda_{0}$, then the
solution to the equation
\[
\partial_{t}u=JLu+g,\text{ }u|_{t=0}=0,
\]
satisfies
\[
\Vert u\Vert_{H^{s}(\mathbb{T})}\lesssim C_{g}e^{wt}.
\]

\end{lemma}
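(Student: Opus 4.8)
The plan is to use Duhamel's formula together with the exponential upper bound from Corollary \ref{cor-upperbound-semigroup}. Since $u|_{t=0}=0$, the solution of the inhomogeneous equation $\partial_t u = JLu + g$ is given by
\[
u(t) = \int_0^t e^{(t-\tau)JL} g(\tau)\, d\tau.
\]
Taking the $H^s(\mathbb{T})$ norm and moving it inside the integral, I would estimate
\[
\Vert u(t)\Vert_{H^s(\mathbb{T})} \leq \int_0^t \bigl\Vert e^{(t-\tau)JL}\bigr\Vert_{H^s(\mathbb{T})}\, \Vert g(\tau)\Vert_{H^s(\mathbb{T})}\, d\tau.
\]

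Now I apply the two hypotheses. Fix $\varepsilon>0$ small enough that $\lambda_0 + \varepsilon < w$; this is possible precisely because $w > \lambda_0$ is a strict inequality. By Corollary \ref{cor-upperbound-semigroup}, $\Vert e^{(t-\tau)JL}\Vert_{H^s(\mathbb{T})} \leq C_\varepsilon e^{(\lambda_0+\varepsilon)(t-\tau)}$ for $t-\tau \geq 0$, and by assumption $\Vert g(\tau)\Vert_{H^s(\mathbb{T})} \leq C_g e^{w\tau}$. Substituting,
\[
\Vert u(t)\Vert_{H^s(\mathbb{T})} \leq C_\varepsilon C_g\, e^{(\lambda_0+\varepsilon)t} \int_0^t e^{(w-\lambda_0-\varepsilon)\tau}\, d\tau = \frac{C_\varepsilon C_g}{w-\lambda_0-\varepsilon}\, e^{(\lambda_0+\varepsilon)t}\bigl(e^{(w-\lambda_0-\varepsilon)t}-1\bigr) \leq \frac{C_\varepsilon C_g}{w-\lambda_0-\varepsilon}\, e^{wt},
\]
which is the desired bound $\Vert u\Vert_{H^s(\mathbb{T})} \lesssim C_g e^{wt}$, with the implicit constant depending on $s$, $w$, $\lambda_0$ (through the choice of $\varepsilon$), but not on $t$ or $C_g$.

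There is essentially no serious obstacle here; the only points requiring a word of care are: (i) that the $C^0$ group $e^{tJL}$ on $H^s(\mathbb{T})$ guaranteed by Lemma \ref{lemma-trichotomy} makes the Duhamel representation legitimate (so $u$ is indeed the mild solution and the integral converges in $H^s$), and (ii) that one must pick $\varepsilon$ depending on the gap $w-\lambda_0$ before invoking the corollary — the strictness $w>\lambda_0$ is exactly what is needed to absorb the polynomially-growing or borderline factors hidden in $C_\varepsilon$. The computation of the elementary integral $\int_0^t e^{(w-\lambda_0-\varepsilon)\tau}d\tau$ is routine and I would not belabor it.
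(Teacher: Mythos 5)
Your argument is correct and is essentially the paper's own proof: Duhamel's formula, the bound $\Vert e^{(t-\tau)JL}\Vert_{H^{s}}\leq C_{\varepsilon}e^{(\lambda_{0}+\varepsilon)(t-\tau)}$ from Corollary \ref{cor-upperbound-semigroup} with $\lambda_{0}+\varepsilon<w$ (the paper simply fixes $\varepsilon=\tfrac{1}{2}(w-\lambda_{0})$), and the elementary integral yielding $\lesssim C_{g}e^{wt}/(w-\lambda_{0})$. No substantive difference.
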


\begin{proof}
Using Corollary \ref{cor-upperbound-semigroup} with $\epsilon=\frac{1}%
{2}\left(  w-\lambda_{0}\right)  $, we have
\[
\left\Vert e^{tJL}\right\Vert _{H^{s}}\lesssim e^{\frac{1}{2}\left(
\lambda_{0}+w\right)  t}.
\]
Then
\begin{align*}
\left\Vert u\left(  t\right)  \right\Vert _{H^{s}(\mathbb{T})}  &  =\left\Vert
\int_{0}^{t}e^{\left(  t-s\right)  JL}g\left(  s\right)  ds\right\Vert
_{H^{s}(\mathbb{T})}\\
&  \lesssim\int_{0}^{t}e^{\frac{1}{2}\left(  \lambda_{0}+w\right)  \left(
t-s\right)  }C_{g}e^{ws}ds\leq C_{g}e^{wt}\frac{2}{w-\lambda_{0}}.
\end{align*}

\end{proof}

To prove nonlinear instability for localized perturbations, we need to study
the semigroup $e^{tJL}$ on the space $H^{s}\left(  \mathbf{R}\right)  $
$\left(  s\geq\frac{m}{2}\right)  $. In general, the operator $L$ has negative
continuous spectra in $H^{s}\left(  \mathbf{R}\right)  $. For example, when
$\mathcal{M=-\partial}_{x}^{2}$, the spectrum of $L=$ $\mathcal{-\partial}%
_{x}^{2}$ $+V\left(  x\right)  $ with a periodic potential $V\left(  x\right)
$ is well studied in the literature and is known to have bands of continuous
spectrum. So Theorem \ref{theorem-dichotomy} does not apply. However, we can
prove upper bound estimate of $e^{tJL}$ on $H^{s}\left(  \mathbf{R}\right)  $,
which suffices for proving nonlinear localized instability. We will need the
following lemma to estimate $e^{tJL}$ on $H^{s}\left(  \mathbf{R}\right)  $.

\begin{lemma}
\label{ptor} Suppose $h(k,x)\in H_{x}^{s}(\mathbb{T})$ for any $k\in I$, where
$I$ is a measurable set contained in an interval with length less than or
equal to $1$, then $\int_{I}h(k,x)e^{ikx}\,dk\in H_{x}^{s}(\mathbf{R})$ if and
only if $\Vert h(k,x)\Vert_{H_{x}^{s}(\mathbb{T})}\in L_{k}^{2}\left(
I\right)  $. More precisely, there exist constants $C_{1}(s),C_{2}(s)>0$, such
that
\[
\Vert\int_{I}h(k,x)e^{ikx}\,dk\Vert_{H_{x}^{s}(\mathbf{R})}^{2}\geq
C_{1}(s)\int_{I}\Vert h(k,x)\Vert_{H_{x}^{s}(\mathbb{T})}^{2}\,dk,
\]
and
\[
\Vert\int_{I}h(k,x)e^{ikx}\,dk\Vert_{H_{x}^{s}(\mathbf{R})}^{2}\leq
C_{2}(s)\int_{I}\Vert h(k,x)\Vert_{H_{x}^{s}(\mathbb{T})}^{2}\,dk.
\]

\end{lemma}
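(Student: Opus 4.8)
The plan is to diagonalize the map $h\mapsto u:=\int_I h(k,\cdot)\,e^{ikx}\,dk$ by the Fourier transform, which converts the claimed two‑sided bound into a Parseval identity that holds essentially term by term. Expand $h$ in its $x$–Fourier series, $h(k,x)=\sum_{n\in\mathbf{Z}}c_n(k)e^{inx}$ with $c_n(k)=\frac{1}{2\pi}\int_{\mathbb{T}}h(k,x)e^{-inx}\,dx$, so that $\|h(k,\cdot)\|_{H^s(\mathbb{T})}^2\asymp\sum_n(1+n^2)^s|c_n(k)|^2$ and hence $\int_I\|h(k,\cdot)\|_{H^s(\mathbb{T})}^2\,dk\asymp\sum_n(1+n^2)^s\int_I|c_n(k)|^2\,dk$. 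Formally $u(x)=\sum_n\int_I c_n(k)e^{i(n+k)x}\,dk$, and the substitution $\xi=n+k$ in the $n$‑th summand gives $\widehat{u}(\xi)=\sqrt{2\pi}\sum_n\mathbf{1}_{n+I}(\xi)\,c_n(\xi-n)$, up to a fixed normalization constant of the Fourier transform on $\mathbf{R}$.

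The structural heart of the argument is that, since $I$ lies in an interval of length at most $1$, the translates $\{n+I\}_{n\in\mathbf{Z}}$ are pairwise disjoint up to a null set. Hence for a.e. $\xi$ at most one term in the sum for $\widehat{u}(\xi)$ is nonzero, so $|\widehat{u}(\xi)|^2=2\pi\sum_n\mathbf{1}_{n+I}(\xi)\,|c_n(\xi-n)|^2$ pointwise, and therefore
\[
\|u\|_{H^s(\mathbf{R})}^2=\int_{\mathbf{R}}(1+\xi^2)^s|\widehat{u}(\xi)|^2\,d\xi=2\pi\sum_n\int_I(1+(n+k)^2)^s|c_n(k)|^2\,dk .
\]
Finally, for $k$ ranging over the bounded set $I$ one has $(1+(n+k)^2)^s\asymp(1+n^2)^s$ uniformly in $n\in\mathbf{Z}$, with implied constants depending only on $s$ (when $I\subset[0,1]$, as in the application; more generally also on the location of the interval). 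Comparing the last display with the expression above for $\int_I\|h(k,\cdot)\|_{H^s(\mathbb{T})}^2\,dk$ yields both inequalities with explicit $C_1(s),C_2(s)>0$; in particular $u\in H^s(\mathbf{R})$ exactly when $\|h(k,\cdot)\|_{H^s(\mathbb{T})}\in L^2_k(I)$, and when this fails the same identity forces $\|u\|_{H^s(\mathbf{R})}=\infty$.

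The only genuinely delicate point is making the Fourier‑side identity $\widehat{u}=\sqrt{2\pi}\sum_n\mathbf{1}_{n+I}\,c_n(\cdot-n)$ rigorous — i.e. justifying the interchange of $\sum_n$ and $\int_I$ — when $h$ is merely $H^s$‑valued and $s$ may be $0$, so that no absolute summability in $n$ is available. I would handle this by first proving the identity, and hence the two‑sided estimate, for $h$ of the special form $\sum_{|n|\le N}c_n(k)e^{inx}$ with simple functions $c_n$ on $I$, where every manipulation is trivially legitimate; the resulting estimate shows that $h\mapsto u$ extends to a bounded bijection with bounded inverse between $\{h:\|h(k,\cdot)\|_{H^s(\mathbb{T})}\in L^2_k(I)\}$ and its image in $H^s(\mathbf{R})$, and the general case then follows by density. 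Equivalently, one can define $U(\xi):=\sum_n\mathbf{1}_{n+I}(\xi)c_n(\xi-n)$ outright, observe that $\|U\|_{L^2((1+\xi^2)^s\,d\xi)}^2=\sum_n\int_I(1+(n+k)^2)^s|c_n(k)|^2\,dk$ by disjointness of supports, and check that the inverse Fourier transform of $U$ is $u$ via $L^2(\mathbf{R})$–convergence of the partial sums $\sum_{|n|\le N}$, which is valid precisely when the right‑hand side of the lemma is finite.
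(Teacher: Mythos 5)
Your proposal is correct and follows essentially the same route as the paper: expand $h$ in its $x$-Fourier series, shift frequencies so the $n$-th term is supported on the translate $n+I$, use that these translates are pairwise disjoint (since $I$ sits in an interval of length at most $1$) to apply Parseval exactly, and conclude via the uniform comparability of the weights $(1+(n+k)^2)^s$ and $(1+n^2)^s$ for $k$ in the bounded set $I$. Your extra care in justifying the interchange of sum and integral is a refinement of, not a departure from, the paper's argument.
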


\begin{proof}
First, we write $h(k,x)$ as a Fourier series
\[
h(k,x)=\sum_{j\in\mathbb{Z}}\widehat{h}(k,j)e^{ijx}.
\]
By direct computations, we have
\begin{align*}
&  \ \ \ \ \ \partial_{x}^{s}\int_{I}h(k,x)e^{ikx}\,dk\\
&  =\int_{I}\sum_{j\in\mathbb{Z}}(i(k+j))^{s}\widehat{h}(k,j)e^{i(k+j)x}%
\,dk\newline=\sum_{j\in\mathbb{Z}}\int_{I_{j}}(ik)^{s}\widehat{h}%
(k-j,j)e^{ikx}\,dk\newline\\
&  =\sum_{j\in\mathbb{Z}}\int_{\mathbf{R}}(ik)^{s}\mathcal{X}_{I_{j}%
}(k)\widehat{h}(k-j,j)e^{ikx}\,dk\newline=\int_{\mathbf{R}}\left(  \sum
_{j\in\mathbb{Z}}(ik)^{s}\mathcal{X}_{I_{j}}(k)\widehat{h}(k-j,j)\right)
e^{ikx}\,dk\newline\\
&  =\left(  \sum_{j\in\mathbb{Z}}(ik)^{s}\mathcal{X}_{I_{j}}(k)\widehat{h}%
(k-j,j)\right)  ^{\vee}(x),
\end{align*}
where
\[
I_{j}=I+j,\ \mathcal{X}_{I_{j}}(k)=%
\begin{cases}
1 & \text{ if $k\in I_{j}$}\\
0 & \text{ if $k\notin I_{j}$}%
\end{cases}
.\newline%
\]
Note that $I\ $is contained in an interval with length no more than $1$,
therefore $I_{j}\ $are disjoint with each other, which implies that
$\mathcal{X}_{I_{j_{1}}}\mathcal{X}_{I_{j_{2}}}=0\ $almost everywhere, if
$j_{1}\neq j_{2}$. Then by Parseval's identity, we have
\begin{align*}
&  \ \ \ \ \ \ \Vert\partial_{x}^{s}\int_{I}h(k,x)e^{ikx}\,dk\Vert_{L_{x}%
^{2}(\mathbf{R})}^{2}\\
&  =\Vert\sum_{j\in\mathbb{Z}}(ik)^{s}\mathcal{X}_{I_{j}}(k)\widehat{h}%
(k-j,j)\Vert_{L_{k}^{2}(\mathbf{R})}^{2}\newline=\int_{\mathbf{R}}\sum
_{j\in\mathbb{Z}}|\mathcal{X}_{I_{j}}(k)|^{2}\left\vert k\right\vert
^{2s}|\widehat{h}(k-j,j)|^{2}\,dk\newline\\
&  =\sum_{j\in\mathbb{Z}}\int_{I_{j}}\left\vert k\right\vert ^{2s}%
|\widehat{h}(k-j,j)|^{2}\,dk\newline=\int_{I}\sum_{j\in\mathbb{Z}}\left\vert
k+j\right\vert ^{2s}|\widehat{h}(k,j)|^{2}\,dk\\
&  \thickapprox\int_{I}\sum_{j\in\mathbb{Z}}\left\vert j\right\vert
^{2s}|\widehat{h}(k,j)|^{2}\,dk.
\end{align*}
Then the desired results follow directly.
\end{proof}

Now, we are ready to prove the upper bound estimate of $e^{tJL}$ on
$H^{s}\left(  \mathbf{R}\right)  $. The following semigroup estimates were
proved in \cite{lin-zeng-Hamiltonian} for the \textquotedblleft
differential\textquotedblright\ case (\ref{symbol-KDV}).

\begin{lemma}
\cite{lin-zeng-Hamiltonian}\label{lemma-semigroup-local-KDV}Assume
(\ref{symbol-KDV}) and
\begin{equation}
\limsup_{\left\vert \xi\right\vert \rightarrow\infty}\frac{\alpha^{\prime}%
(\xi)}{\left\vert \xi\right\vert ^{m}}<\infty. \label{assumption-decay-symbol}%
\end{equation}
Let $\lambda_{0}\geq0$ be such that
\begin{equation}
\operatorname{Re}\lambda\leq\lambda_{0},\quad\forall\xi\in\lbrack
0,1],\;\lambda\in\sigma(J_{\xi}L_{\xi}). \label{defn-largest-lambda-0}%
\end{equation}
Then for every $s\geq\frac{m}{2}$, $\varepsilon>0\ $there exist
$C(s,\varepsilon)>0$ such that
\[
\Vert e^{tJL}u(x)\Vert_{H^{s}{(\mathbf{R})}}\leqslant C(s,\varepsilon
)e^{\left(  \lambda_{0}+\varepsilon\right)  t}\Vert u(x)\Vert_{H^{s}%
{(\mathbf{R})}},\text{ }\forall t>0,
\]
for any $u\in H^{s}{(\mathbf{R})}$.
\end{lemma}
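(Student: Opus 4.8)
The plan is to reduce the estimate on $\mathbf{R}$ to a family of estimates on the torus via the Floquet--Bloch decomposition, and then to control that family uniformly in the Bloch parameter by applying the exponential trichotomy of Theorem~\ref{theorem-dichotomy} fiberwise. First I would write $u(x)=\int_{0}^{1}e^{i\xi x}u_{\xi}(x)\,d\xi$ as in \eqref{norm-local-equivalence}, so that $e^{tJL}u(x)=\int_{0}^{1}e^{i\xi x}(e^{tJ_{\xi}L_{\xi}}u_{\xi})(x)\,d\xi$, and then invoke Lemma~\ref{ptor} with $I=[0,1]$, once with $h(\xi,x)=(e^{tJ_{\xi}L_{\xi}}u_{\xi})(x)$ and once with $h(\xi,x)=u_{\xi}(x)$, to get
\[
\Vert e^{tJL}u\Vert_{H^{s}(\mathbf{R})}^{2}\thickapprox\int_{0}^{1}\Vert e^{tJ_{\xi}L_{\xi}}u_{\xi}\Vert_{H^{s}(\mathbb{T}_{2\pi})}^{2}\,d\xi,\qquad \Vert u\Vert_{H^{s}(\mathbf{R})}^{2}\thickapprox\int_{0}^{1}\Vert u_{\xi}\Vert_{H^{s}(\mathbb{T}_{2\pi})}^{2}\,d\xi.
\]
Thus it suffices to show that for every $s\geq m/2$ and $\varepsilon>0$ one has $\Vert e^{tJ_{\xi}L_{\xi}}\Vert_{H^{s}(\mathbb{T}_{2\pi})}\leq C(s,\varepsilon)e^{(\lambda_{0}+\varepsilon)t}$ for all $t>0$, with $C(s,\varepsilon)$ \emph{independent of $\xi\in[0,1]$}.

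Next I would establish this first in the energy space $X=H^{m/2}(\mathbb{T}_{2\pi})$, by checking the hypotheses of Theorem~\ref{theorem-dichotomy} for $J_{\xi}L_{\xi}$ with uniform constants. Hypothesis (H1) holds since $J_{\xi}=\partial_{x}+i\xi$ is skew-adjoint, and (H3) is automatic because $\dim\ker L_{\xi}<\infty$. For (H2), the point is that under \eqref{symbol-KDV} the symbol $\alpha(\xi+\cdot)$ is, for $\xi\in[0,1]$, comparable to $\langle\cdot\rangle^{m}$ uniformly in $\xi$: hence the form $\langle L_{\xi}\cdot,\cdot\rangle$ is uniformly bounded on $H^{m/2}$, the positive subspace $X_{+}$ has a uniform coercivity constant $\delta_{1}$, and $n^{-}(L_{\xi})$ is uniformly bounded (it is controlled by the finitely many integers $n$ with $\alpha(\xi+n)<c+\Vert f^{\prime}(u_{c})\Vert_{L^{\infty}}$, and such $n$ stay in a fixed bounded set as $\xi$ ranges over $[0,1]$). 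Theorem~\ref{theorem-dichotomy} then gives a trichotomy $X=E_{\xi}^{u}\oplus E_{\xi}^{c}\oplus E_{\xi}^{s}$ with the estimates \eqref{estimate-stable.unstable}--\eqref{estimate-center}; since $\lambda_{u}(\xi)\leq\lambda_{0}$ by \eqref{defn-largest-lambda-0} and $k_{0}\leq 1+2n^{-}(L_{\xi})$ is uniformly bounded, the polynomial prefactors are absorbed into $e^{\varepsilon t}$, yielding $\Vert e^{tJ_{\xi}L_{\xi}}\Vert_{H^{m/2}(\mathbb{T}_{2\pi})}\leq C(\varepsilon)e^{(\lambda_{0}+\varepsilon)t}$. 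The one subtle point is that the constant $M$ of Theorem~\ref{theorem-dichotomy} must be taken uniform across the (finitely many) $\xi$ where $\dim E_{\xi}^{u}$ jumps; I would handle this by the structural stability of exponential trichotomies under perturbations small in $\mathcal{L}(H^{m/2},H^{-m/2})$, using \eqref{assumption-decay-symbol} to show $\xi\mapsto\mathcal{M}_{\xi}$ is continuous in that topology (indeed $\alpha(\xi+k)-\alpha(\xi_{0}+k)=O(|\xi-\xi_{0}|\langle k\rangle^{m})$), so that every $\xi_{0}\in[0,1]$ has a neighbourhood on which the estimate holds with constant $2M_{\xi_{0}}$, and then covering $[0,1]$ by compactness.

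To pass from $H^{m/2}$ to $H^{s}$ for all $s\geq m/2$, I would use the higher-order part of Theorem~\ref{theorem-dichotomy}. Because in the ``differential'' case one application of $J_{\xi}L_{\xi}$ costs $m+1$ derivatives and $\mathcal{M}_{\xi}$ is uniformly elliptic, the space $X_{\xi}^{k}=D((J_{\xi}L_{\xi})^{k})$ with the graph norm \eqref{norm-X-k} is, uniformly in $\xi$, equivalent to $H^{m/2+k(m+1)}(\mathbb{T}_{2\pi})$; moreover $E_{\xi}^{u},E_{\xi}^{s}\subset X_{\xi}^{k}$ because the stable and unstable eigenfunctions of $J_{\xi}L_{\xi}$ are $C^{\infty}$ with bounds uniform in $\xi$ --- this is precisely the computation in the proof of Lemma~\ref{lem-eigenfunction-kdv} (whose parameter $k$ is our $\xi$), whose constants depend only on $c$, $\Vert f^{\prime}(u_{c})\Vert_{C^{[m/2]+1}}$, and the uniform ellipticity of $\mathcal{M}_{\xi}$. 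Theorem~\ref{theorem-dichotomy} then yields $\Vert e^{tJ_{\xi}L_{\xi}}\Vert_{H^{m/2+k(m+1)}(\mathbb{T}_{2\pi})}\leq C_{k}(\varepsilon)e^{(\lambda_{0}+\varepsilon)t}$ uniformly in $\xi$, and complex interpolation between the consecutive exponents $m/2+k(m+1)$ and $m/2+(k+1)(m+1)$ covers all $s\geq m/2$ (interpolation of a bounded operator preserves the exponential rate). Combined with the first paragraph this gives the claimed bound on $H^{s}(\mathbf{R})$.

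I expect the main obstacle to be exactly the uniformity in the Bloch parameter in the second step: producing a trichotomy constant $M$ and a polynomial degree $k_{0}$ that do not degenerate near the parameter values where unstable or stable eigenvalues collide or cross the imaginary axis. The tools to overcome it are the a priori spectral bound \eqref{defn-largest-lambda-0} (so any transient mode has real part $\leq\lambda_{0}$), the uniform bound on $n^{-}(L_{\xi})$ (which caps $k_{0}$), and the perturbative continuity of $\xi\mapsto J_{\xi}L_{\xi}$ supplied by \eqref{assumption-decay-symbol}, glued together by a compactness argument on $[0,1]$.
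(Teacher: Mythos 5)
Your first step (the Floquet--Bloch reduction via Lemma \ref{ptor}, reducing everything to a bound on $e^{tJ_{\xi}L_{\xi}}$ on $H^{s}(\mathbb{T}_{2\pi})$ uniform in $\xi\in[0,1]$) and your last step (upgrading from the energy space to $H^{s}$ by applying $J_{\xi}L_{\xi}$ repeatedly, using the eigenfunction regularity of Lemma \ref{lem-eigenfunction-kdv}, and interpolating) coincide with the argument this lemma is quoted from (Lemma 11.2 of \cite{lin-zeng-Hamiltonian}, reproduced in this paper for the smoothing case as Lemma \ref{lemma-semigroup-whitham-localized}). The gap is in the middle step, which is the heart of the matter: you get the fiberwise estimate from Theorem \ref{theorem-dichotomy} and then try to make the constant locally uniform by invoking ``structural stability of exponential trichotomies under perturbations small in $\mathcal{L}(H^{m/2},H^{-m/2})$''. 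No such roughness theorem is available here, and the setting is exactly the one where such statements break down: (i) the perturbation $J_{\xi}L_{\xi}-J_{\xi_{0}}L_{\xi_{0}}$ is not bounded on $X=H^{m/2}(\mathbb{T}_{2\pi})$ (it loses $m+1$ derivatives, and smallness in a weaker norm does not transfer to closeness of the groups on $X$, since $e^{tJ_{\xi}L_{\xi}}$ has no smoothing); (ii) the trichotomy has no exponential gap to exploit --- the center part grows only polynomially, and at a parameter $\xi_{0}$ where purely imaginary eigenvalues of indefinite Krein signature sit on the axis, nearby $\xi$ produce new unstable eigenvalues with arbitrarily small real part, so $\dim E^{u}_{\xi}$ jumps and the splitting of $J_{\xi_{0}}L_{\xi_{0}}$ simply does not persist. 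At precisely those $\xi_{0}$ your local persistence argument yields no neighborhood with a finite constant, so the compactness covering of $[0,1]$ has nothing to glue.

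What the actual proof uses instead, and what is missing from your outline, is the Hamiltonian structure as a uniform Lyapunov functional rather than any perturbation theory of invariant splittings. By the instability index formula (Proposition 11.2 of \cite{lin-zeng-Hamiltonian}), only finitely many eigenvalues of $J_{\xi_{0}}L_{\xi_{0}}$ carry generalized eigenspaces on which $\langle L_{\xi_{0}}\cdot,\cdot\rangle$ fails to be definite; one encloses this finite group in a fixed contour, proves continuity of the resolvent $(\lambda-J_{\xi}L_{\xi})^{-1}$ in $\xi$ by a Fredholm argument (this is where \eqref{assumption-decay-symbol} enters), and splits by the resulting Riesz projection $P(\xi)$, which is continuous and hence uniformly bounded near $\xi_{0}$. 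On the finite-dimensional range of $P(\xi)$ the bound $C(1+t^{n})e^{\lambda_{0}t}$ follows from \eqref{defn-largest-lambda-0} alone; on the complementary invariant subspace the form $\langle L_{\xi}v,v\rangle$ is uniformly definite for $|\xi-\xi_{0}|$ small and is conserved along $e^{tJ_{\xi}L_{\xi}}$, which gives a uniform bound there with no reference to stable/center/unstable subspaces at all, and in particular is insensitive to eigenvalue collisions and crossings. This conservation-law mechanism is the essential uniform-in-$\xi$ ingredient of the proof, and without it (or an equally robust substitute) your step two does not close.
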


\begin{remark}
The assumption (\ref{assumption-decay-symbol}) can be replaced by a weaker
assumption
\[
\lim_{\rho\rightarrow0}\sup_{\xi\in\mathbf{Z}}\frac{|\alpha(\xi+\rho
)-\alpha(\xi)|}{1+|\xi|^{m}}\rightarrow0.
\]

\end{remark}

For the Whitham type equation, we have the following similar result.

\begin{lemma}
\label{lemma-semigroup-whitham-localized}Assume (\ref{symbol-whitham}) and
(\ref{assumption-TW-whitham}), then for every $s\geq0$, $\varepsilon>0\ $there
exists $C(s,\varepsilon)>0$ such that
\begin{equation}
\Vert e^{tJL}u(x)\Vert_{H^{s}{(\mathbf{R})}}\leqslant C(s,\varepsilon
)e^{\left(  \lambda_{0}+\varepsilon\right)  t}\Vert u(x)\Vert_{H^{s}%
{(\mathbf{R})}},\text{ }\forall t>0, \label{estimate-local-whitham}%
\end{equation}
for any $u\in H^{s}{(\mathbf{R})}$. Here, $\lambda_{0}$ is the largest growth
rate as defined in (\ref{defn-largest-lambda-0}).
\end{lemma}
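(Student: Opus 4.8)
The plan is to run the same scheme as for the ``differential'' case in Lemma~\ref{lemma-semigroup-local-KDV}: pass to the Bloch fibers so that the localized estimate reduces to a \emph{uniform in $\xi$} estimate on $\mathbb{T}_{2\pi}$, and then apply the Hamiltonian trichotomy Theorem~\ref{theorem-dichotomy} to the factorization $J_{\xi}L_{\xi}=(-J_{\xi})(-L_{\xi})$, exactly as in the proof of Lemma~\ref{lemma-trichotomy}(ii). For $u\in H^{s}(\mathbf{R})$ write $u(x)=\int_{0}^{1}e^{i\xi x}u_{\xi}(x)\,d\xi$ with $u_{\xi}(x)=\sum_{n\in\mathbf{Z}}e^{inx}\widehat{u}(n+\xi)\in H^{s}(\mathbb{T}_{2\pi})$. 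Since $f^{\prime}(u_{c})$ is $2\pi$-periodic, $JL$ commutes with translation by $2\pi$ and preserves this decomposition, so that, as recorded in the introduction, $e^{tJL}u(x)=\int_{0}^{1}e^{i\xi x}\bigl(e^{tJ_{\xi}L_{\xi}}u_{\xi}\bigr)(x)\,d\xi$. Applying Lemma~\ref{ptor} (with $I=[0,1]$) to both sides gives
\[
\Vert e^{tJL}u\Vert_{H^{s}(\mathbf{R})}^{2}\thickapprox\int_{0}^{1}\Vert e^{tJ_{\xi}L_{\xi}}u_{\xi}\Vert_{H^{s}(\mathbb{T}_{2\pi})}^{2}\,d\xi,\qquad\Vert u\Vert_{H^{s}(\mathbf{R})}^{2}\thickapprox\int_{0}^{1}\Vert u_{\xi}\Vert_{H^{s}(\mathbb{T}_{2\pi})}^{2}\,d\xi,
\]
so (\ref{estimate-local-whitham}) is equivalent to the bound $\Vert e^{tJ_{\xi}L_{\xi}}\Vert_{H^{s}(\mathbb{T}_{2\pi})}\leq C(s,\varepsilon)e^{(\lambda_{0}+\varepsilon)t}$ holding with a constant \emph{independent of} $\xi\in[0,1]$.

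Fix $\xi$ and write $-L_{\xi}=(c-f^{\prime}(u_{c}))-\mathcal{M}_{\xi}$. By (\ref{assumption-TW-whitham}) multiplication by $c-f^{\prime}(u_{c})$ is bounded below by $\delta_{0}>0$, and by (\ref{symbol-whitham}) $\mathcal{M}_{\xi}$ is a smoothing, hence compact, operator on $L^{2}(\mathbb{T}_{2\pi})$; therefore $n^{-}(-L_{\xi})<\infty$ and the hypotheses (H1)--(H3) hold for $J_{\xi}L_{\xi}=(-J_{\xi})(-L_{\xi})$ on $L^{2}(\mathbb{T}_{2\pi})$, with (H3) automatic since $\dim\ker L_{\xi}<\infty$. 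Theorem~\ref{theorem-dichotomy} then yields $L^{2}(\mathbb{T}_{2\pi})=E_{\xi}^{u}\oplus E_{\xi}^{c}\oplus E_{\xi}^{s}$ with $\dim E_{\xi}^{u,s}\leq n^{-}(-L_{\xi})$, growth exponent $\lambda_{u}(\xi)\leq\lambda_{0}$ by (\ref{defn-largest-lambda-0}), and center growth degree $k_{0}(\xi)\leq1+2n^{-}(-L_{\xi})$. Since $\lambda_{0}\geq0$, all the polynomial factors in (\ref{estimate-stable.unstable})--(\ref{estimate-center}) are absorbed into $e^{\varepsilon t}$, so for $t\geq0$,
\[
\Vert e^{tJ_{\xi}L_{\xi}}\Vert_{L^{2}(\mathbb{T}_{2\pi})}\leq C_{\varepsilon}\,M(\xi)\,e^{(\lambda_{0}+\varepsilon)t}.
\]

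The crux is to bound $M(\xi)$ uniformly. First, $N:=\sup_{\xi\in[0,1]}n^{-}(-L_{\xi})<\infty$: by (A1) the symbol $\alpha$ is bounded near $0$ and by (\ref{symbol-whitham}) satisfies $\alpha(\eta)\leq c_{2}|\eta|^{-m}$ for large $|\eta|$, so $\Vert\mathcal{M}_{\xi}\Vert_{L^{2}\to L^{2}}$ is bounded uniformly in $\xi\in[0,1]$ and the number of Fourier modes $n$ with $\alpha(n+\xi)\geq\delta_{0}/2$ is uniformly bounded; on the orthogonal complement of those modes $\langle-L_{\xi}\cdot,\cdot\rangle\geq(\delta_{0}/2)\Vert\cdot\Vert^{2}$, whence $n^{-}(-L_{\xi})\leq N$ and $k_{0}(\xi)\leq2N+1$ for every $\xi$. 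It remains to bound the trichotomy constant itself. The family $\xi\mapsto J_{\xi}L_{\xi}$ is analytic on the compact interval $[0,1]$; only finitely many eigenvalues of the self-adjoint operator $-L_{\xi}$ lie below $\delta_{0}/2$, each is analytic in $\xi$ and not identically zero (otherwise $0$ would belong to the essential spectrum of $-L_{\xi}$, which lies in $[\delta_{0},\infty)$), hence vanishes only at finitely many points. Away from these points, and from the finitely many $\xi$ at which unstable/stable eigenvalues of $J_{\xi}L_{\xi}$ collide, the spectral projections $P_{\xi}^{u,c,s}$ and the coercivity constant on $E_{\xi}^{+}$ vary continuously, so $M(\xi)$ is locally bounded; near each exceptional point one uses that the trichotomy estimate depends only on the rate $\lambda_{u}(\xi)\leq\lambda_{0}$ and on the uniformly controlled data $\bigl(N,\delta_{0},\Vert\mathcal{M}_{\xi}\Vert\bigr)$, not on the fine splitting of the near-zero eigenvalues. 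By compactness $M:=\sup_{\xi}M(\xi)<\infty$, which establishes (\ref{estimate-local-whitham}) for $s=0$.

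Finally I pass from $L^{2}$ to $H^{s}$ using the last part of Theorem~\ref{theorem-dichotomy}. Since $c-f^{\prime}(u_{c})\in C^{\infty}$ (by the regularity of $u_{c}$ under (\ref{assumption-TW-whitham})) and $\mathcal{M}_{\xi}$ has nonpositive order, the leading part of $(J_{\xi}L_{\xi})^{k}$ is $(\partial_{x}+i\xi)^{k}(c-f^{\prime}(u_{c}))^{k}$ up to lower-order terms, so the graph-norm space $X^{k}=D((J_{\xi}L_{\xi})^{k})$ coincides with $H^{k}(\mathbb{T}_{2\pi})$ with norms equivalent uniformly in $\xi$, and $E_{\xi}^{u,s}\subset H^{k}(\mathbb{T}_{2\pi})$ with uniformly bounded bases by the regularity argument of Lemma~\ref{lem-eigenfunction-whitham}. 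Hence the bound of the previous two paragraphs holds on $H^{k}(\mathbb{T}_{2\pi})$ for every integer $k\geq0$ uniformly in $\xi$, and on $H^{s}(\mathbb{T}_{2\pi})$ for all real $s\geq0$ by interpolation; feeding this back into the equivalence of the first paragraph yields (\ref{estimate-local-whitham}). I expect the one genuinely delicate point to be the uniform bound on $M(\xi)$: preventing blow-up of the trichotomy constant as $\xi$ approaches a value where an eigenvalue of $-L_{\xi}$ crosses $0$ (degenerating the coercivity constant on the positive subspace) or where eigenvalues of $J_{\xi}L_{\xi}$ collide. This should be handled either by explicit bookkeeping of the constants in Theorem~\ref{theorem-dichotomy} purely in terms of $N$, $\delta_{0}$ and $\Vert\mathcal{M}_{\xi}\Vert$, or by isolating the finitely many exceptional fibers and treating their neighborhoods via a perturbative limiting argument.
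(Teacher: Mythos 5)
Your reduction via the Bloch decomposition and Lemma \ref{ptor} to a uniform-in-$\xi$ bound for $e^{tJ_{\xi}L_{\xi}}$ on $\mathbb{T}_{2\pi}$, and the observation that $n^{-}(-L_{\xi})$ is bounded uniformly in $\xi$, match the paper. But the step you yourself flag as delicate --- the uniform bound on the trichotomy constant $M(\xi)$ --- is a genuine gap, not a detail. The constant produced by Theorem \ref{theorem-dichotomy} is \emph{not} a function of $\bigl(N,\delta_{0},\Vert\mathcal{M}_{\xi}\Vert\bigr)$ alone: it involves the coercivity constant $\delta_{1}$ of $\langle -L_{\xi}\cdot,\cdot\rangle$ on the positive subspace and the norms of the projections onto $E_{\xi}^{u},E_{\xi}^{c},E_{\xi}^{s}$, and exactly at the fibers where an eigenvalue of $-L_{\xi}$ crosses $0$ or eigenvalues of $J_{\xi}L_{\xi}$ collide these quantities can degenerate. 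Saying that near such $\xi$ "the estimate depends only on the rate and the uniformly controlled data, not on the fine splitting" is an unproved assertion, and the proposed alternatives ("explicit bookkeeping" or "a perturbative limiting argument") are precisely the missing content rather than a proof. In addition, the claim that the unstable/stable eigenspaces lie in $H^{k}$ "with uniformly bounded bases" is likewise asserted, not shown.

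The paper closes exactly this gap by a local-in-$\xi$ perturbation argument instead of invoking the trichotomy fiberwise. Fixing $\xi_{0}$, it isolates the finite part of $\sigma(J_{\xi_{0}}L_{\xi_{0}})$ on which $-L_{\xi_{0}}$ fails to be positive definite (finite by the instability index formula), and proves that the resolvent $(\lambda-J_{\xi}L_{\xi})^{-1}$ is continuous in $\xi$ --- this is where (\ref{assumption-TW-whitham}) enters nontrivially, through a Fredholm argument based on the boundedness of $\left(a_{0}+D\right)^{-1}:L^{2}\rightarrow H^{1}$ with $D=(\partial_{x}+ik)\left(c-f^{\prime}(u_{c})\right)$. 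This continuity gives Riesz projections $P(\xi)$ depending continuously on $\xi$; on the finite-dimensional range $Z_{\xi}=P(\xi)X$ the growth is at most $C(1+t^{n})e^{\lambda_{0}t}$ with $C$ uniform near $\xi_{0}$, while on $Y_{\xi}=(I-P(\xi))X$ the quadratic form $\langle -L_{\xi}\cdot,\cdot\rangle$ is uniformly equivalent to $\Vert\cdot\Vert_{L^{2}}^{2}$ for $|\xi-\xi_{0}|$ small and is invariant under $e^{tJ_{\xi}L_{\xi}}$, so the semigroup is uniformly bounded there; compactness of $[0,1]$ then globalizes, and $H^{s}$ follows by applying $J_{\xi}L_{\xi}$ repeatedly and interpolating. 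To repair your proposal you would need to supply an argument of this type (continuity of the resolvent plus the conserved energy form on the spectral complement), since the abstract constant-tracking route you sketch does not go through as stated.
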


\begin{proof}
The proof is similar to that of Lemma \ref{lemma-semigroup-local-KDV} (or
Lemma 11.2 in \cite{lin-zeng-Hamiltonian}). We sketch it here. First, for any
$u\in H^{s}{(\mathbf{R)}}$,
\[
u(x)=\int_{0}^{1}e^{i\xi x}u_{\xi}(x)d\xi,\;\text{ where }u_{\xi}%
(x)=\Sigma_{n\in\mathbf{Z}}e^{inx}\hat{u}(n+\xi)\in H^{s}(\mathbb{T}_{2\pi}),
\]
and $\hat{u}$ is the Fourier transform of $u$. By Lemma \ref{ptor}, there
exists $C>0$ such that
\begin{equation}
\frac{1}{C}\Vert u\Vert_{H^{s}(\mathbf{R})}^{2}\leq\int_{0}^{1}\Vert u_{\xi
}\left(  x\right)  \Vert_{H_{x}^{s}(\mathbb{T}_{2\pi})}^{2}\,d\xi\leq C\Vert
u\Vert_{H^{s}(\mathbf{R})}^{2}. \label{norm-equivalence}%
\end{equation}
Note that
\[
e^{tJL}u(x)=\int_{0}^{1}e^{i\xi x}e^{tJ_{\xi}L_{\xi}}u_{\xi}\left(  x\right)
\,d\xi,
\]
and thus
\begin{equation}
\Vert e^{tJL}u(x)\Vert_{H^{s}{(\mathbf{R})}}^{2}\thickapprox\int_{0}^{1}\Vert
e^{tJ_{\xi}L_{\xi}}u_{\xi}\left(  x\right)  \Vert_{H_{x}^{s}(\mathbb{T}_{2\pi
})}^{2}\,d\xi. \label{semigroup-frequency}%
\end{equation}

So to prove (\ref{estimate-local-whitham}), it suffices to show that: for any
$\varepsilon,s\geq0$, there exists $C(s,\varepsilon)>0$ such that
\begin{equation}
\Vert e^{tJ_{\xi}L_{\xi}}v(x)\Vert_{H_{x}^{s}(\mathbb{T}_{2\pi})}\leq
C(s,\varepsilon)e^{\left(  \lambda_{0}+\varepsilon\right)  t}\Vert
v(x)\Vert_{H_{x}^{s}(\mathbb{T}_{2\pi})},\ \forall\xi\in\lbrack
0,1].\label{semigroup-modu-whitham}%
\end{equation}
It suffices to prove the lemma for $s=0$ since the estimates for general
$s\geq0$ can be obtained by applying $J_{\xi}L_{\xi}$ repeatedly to the
estimates for $s=0$ (and interpolation for the case when $s$ is not an
integer). Due to the compactness of $[0,1]$, it suffices to prove that for any
$\xi_{0}\in\lbrack0,1]$, there exist $C,\epsilon>0$ such that
(\ref{semigroup-modu-whitham}) holds for $\xi\in(\xi_{0}-\epsilon,\xi
_{0}+\epsilon)$. We first note that each $\lambda\in\sigma(J_{\xi_{0}}%
L_{\xi_{0}})$ is an isolated eigenvalue with finite algebraic multiplicity and
$L_{\xi_{0}}$ is non-degenerate on $E_{\lambda}$ when $\lambda\neq0$ and on
$E_{0}/(E_{0}\cap\ker L_{\xi_{0}})$, where $E_{\lambda}$ is the generalized
eigenspace of the eigenvalue $\lambda$ of $J_{\xi_{0}}L_{\xi_{0}}$. By
(\ref{assumption-TW-whitham}), $n^{-}\left(  -L_{\xi}\right)  <\infty$. Let
\[
\Lambda=\{\lambda\in\sigma(J_{\xi_{0}}L_{\xi_{0}})\mid\exists\ \delta>0\text{
s.t. }\langle-L_{\xi_{0}}v,v\rangle\geq\delta\Vert v\Vert^{2}\text{ on
}E_{\lambda}\}.
\]
By the instability index formula (Proposition 11.2 in
\cite{lin-zeng-Hamiltonian}), $\sigma(J_{\xi_{0}}L_{\xi_{0}})\backslash
\Lambda$ is finite and thus
\[
n=\Sigma_{\lambda\in\sigma(J_{\xi_{0}}L_{\xi_{0}})\backslash\Lambda}\dim
E_{\lambda}<\infty.
\]
Moreover, there exists $\varepsilon_{0}>0$ such that $\Omega\cap
\Lambda=\emptyset$, where
\[
\Omega=\cup_{\lambda\in\sigma(J_{\xi_{0}}L_{\xi_{0}})\backslash\Lambda}%
\{z\mid\left\vert z-\lambda\right\vert <\varepsilon_{0}\}\subset\mathbf{C}.
\]
Assuming that:
\begin{equation}
\text{the resolvent }(\lambda-J_{\xi}L_{\xi})^{-1}\ \text{is continuous in
}\xi\in\lbrack0,1],\label{claim-resolvent-whitham}%
\end{equation}
we now prove (\ref{semigroup-modu-whitham}) for $\xi$ in a small interval near
$\xi_{0}$. Indeed, by (\ref{claim-resolvent-whitham}), there exists
$\epsilon>0$ such that $\partial\Omega\cap\sigma(J_{\xi}L_{\xi})=\emptyset$
for any $\xi\in\lbrack\xi_{0}-\epsilon,\xi_{0}+\epsilon]$. For such $\xi$,
let
\[
P(\xi)=\frac{1}{2\pi i}\oint_{\partial\Omega}(\lambda-J_{\xi}L_{\xi}%
)^{-1}d\lambda,\quad Z_{\xi}=P(\xi)X,\quad Y_{\xi}=\big(I-P(\xi)\big)X,
\]
which are continuous in $\xi$ and invariant under $e^{tJ_{\xi}L_{\xi}}$.
Therefore $\dim Z_{\xi}=n$. By the definition of $\Omega$, we know that
$-L_{\xi_{0}}|_{Y_{\xi_{0}}}$ is positive definite. Then the continuity of
$L_{\xi}$ in $\xi$ implies that there exists $\delta_{0}>0$ such that
\[
\delta_{0}^{-2}\Vert v\Vert^{2}\geq\langle-L_{\xi}v,v\rangle\geq\delta_{0}%
^{2}\Vert v\Vert^{2},\quad\forall v\in Y_{\xi},\;|\xi-\xi_{0}|\leq\epsilon.
\]
So for any $\xi\in\lbrack\xi_{0}-\epsilon,\xi_{0}+\epsilon]$, there exists a
generic constant $C>0$ independent of $\xi$, such that for any $v\in
L^{2}\left(  \mathbb{T}_{2\pi}\right)  $,
\begin{align*}
&  \Vert e^{tJ_{\xi}L_{\xi}}v\Vert\leq\Vert e^{tJ_{\xi}L_{\xi}}P(\xi
)v\Vert+\Vert e^{tJ_{\xi}L_{\xi}}\big(I-P(\xi)\big)v\Vert\\
\leq &  C\Big((1+t^{n})e^{\lambda_{0}t}\Vert P(\xi)v\Vert+\langle-L_{\xi
}e^{tJ_{\xi}L_{\xi}}\big(I-P(\xi)\big)v,e^{tJ_{\xi}L_{\xi}}\big(I-P(\xi
)\big)v\rangle^{\frac{1}{2}}\Big)\\
\leq &  C\Big((1+t^{n})e^{\lambda_{0}t}\Vert P(\xi)v\Vert+\langle-L_{\xi
}\big(I-P(\xi)\big)v,\big(I-P(\xi)\big)v\rangle^{\frac{1}{2}}\Big)\\
\leq &  C(1+t^{n})e^{\lambda_{0}t}\Vert v\Vert_{L^{2}}\leq C\left(
\varepsilon\right)  e^{\left(  \lambda_{0}+\varepsilon\right)  t}\Vert
v\Vert_{L^{2}}.
\end{align*}
Along with the compactness of $[0,1]$, this implies estimates
(\ref{semigroup-modu-whitham}) and (\ref{estimate-local-whitham}).

It remains to prove (\ref{claim-resolvent-whitham}) about the continuity of
the resolvent. Fix $k\in\lbrack0,1]$. For $k^{\prime}$ near $k$, we have
\[
J_{k^{\prime}}L_{k^{\prime}}-J_{k}L_{k}=(\partial_{x}+ik)(\mathcal{M}%
_{k^{\prime}}-\mathcal{M}_{k})+i(k^{\prime}-k)\left(  \mathcal{M}_{k^{\prime}%
}-c+f^{\prime}(u_{c})\right)  .
\]
Let $D=(\partial_{x}+ik)\left(  c-f^{\prime}(u_{c})\right)  $, then by
(\ref{assumption-TW-whitham}) and the proof of Lemma
\ref{lem-eigenfunction-whitham}, for any $a_{0}>0$, $\left(  a_{0}+D\right)
^{-1}:L^{2}\rightarrow H^{1}$ is bounded. So
\begin{equation}
\left\vert \left(  a_{0}+D\right)  ^{-1}\left(  J_{k^{\prime}}L_{k^{\prime}%
}-J_{k}L_{k}\right)  \right\vert _{L^{2}\rightarrow L^{2}}\rightarrow
0\ \text{as }k\rightarrow k^{\prime}. \label{converge-zero-whitham}%
\end{equation}
Moreover,
\[
I+\left(  a_{0}+D\right)  ^{-1}(\lambda-J_{k}L_{k})=\left(  a_{0}+D\right)
^{-1}\left(  \lambda+a_{0}-(\partial_{x}+ik)\mathcal{M}_{k}\right)
\]
is compact in $L^{2}$. Therefore $A=\left(  a_{0}+D\right)  ^{-1}%
(\lambda-J_{k}L_{k})$ is a Fredholm operator of index $0$. Suppose
$\lambda\notin\sigma(J_{k}L_{k})$, then $A$ is injective and thus $A^{-1}$ is
bounded on $L^{2}$. Along with (\ref{converge-zero-whitham}), we obtain
\[
|(\lambda-J_{k}L_{k})^{-1}(J_{k^{\prime}}L_{k^{\prime}}-J_{k}L_{k}%
)|=|A^{-1}\left(  a_{0}+D\right)  ^{-1}(J_{k^{\prime}}L_{k^{\prime}}%
-J_{k}L_{k})|\rightarrow0
\]
as $k^{\prime}\rightarrow k$. From
\[
\lambda-J_{k^{\prime}}L_{k^{\prime}}=(\lambda-J_{k}L_{k})\left(
I-(\lambda-J_{k}L_{k})^{-1}(J_{k^{\prime}}L_{k^{\prime}}-J_{k}L_{k})\right)
,
\]
we obtain the continuity of the resolvent $(\lambda-J_{k}L_{k})^{-1}$ in
$k\in\lbrack0,1]$. This finishes the proof of the lemma.
\end{proof}

The following is an analogue of Lemma \ref{semigestim}.

\begin{lemma}
\label{lemma-semigroup-inhomo-line}If
\[
\Vert g\left(  t\right)  \Vert_{H^{s}(\mathbf{R})}\leqslant C_{g}\frac{e^{wt}%
}{1+t^{b}},\ \ t\geq0
\]
for some $b>0$, $s\geq s_{0}$ and $w>\lambda_{0}$, then the solution to the
equation
\[
\partial_{t}u=JLu+g,\text{ }u|_{t=0}=0,
\]
satisfies
\[
\Vert u\Vert_{H^{s}(\mathbf{R})}\leqslant CC_{g}\frac{e^{wt}}{1+t^{b}}%
,\ t\geq0.
\]

\end{lemma}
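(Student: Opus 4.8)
The plan is to apply Duhamel's formula together with the exponential semigroup bound already established in the localized setting, and then reduce the whole matter to a scalar Laplace-type integral inequality. Since $JL$ generates a $C^{0}$ group on $H^{s}(\mathbf{R})$, the solution is the mild solution
\[
u(t)=\int_{0}^{t}e^{(t-\tau)JL}g(\tau)\,d\tau .
\]
Choose $\varepsilon=\tfrac12(w-\lambda_{0})>0$ and set $\mu=\lambda_{0}+\varepsilon=\tfrac12(\lambda_{0}+w)$, so that $\mu<w$. By Lemma \ref{lemma-semigroup-local-KDV} in the ``differential'' case, or Lemma \ref{lemma-semigroup-whitham-localized} in the ``smoothing'' case, there is a constant $C$ with $\Vert e^{tJL}\Vert_{H^{s}(\mathbf{R})}\leq Ce^{\mu t}$ for all $t\geq0$. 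Hence
\[
\Vert u(t)\Vert_{H^{s}(\mathbf{R})}\leq CC_{g}\,e^{\mu t}\int_{0}^{t}\frac{e^{(w-\mu)\tau}}{1+\tau^{b}}\,d\tau ,
\]
and the lemma will follow once we show that, with $\beta:=w-\mu=\tfrac12(w-\lambda_{0})>0$,
\[
\int_{0}^{t}\frac{e^{\beta\tau}}{1+\tau^{b}}\,d\tau\leq C_{\beta}\,\frac{e^{\beta t}}{1+t^{b}},\qquad t\geq0,
\]
since substituting this back yields $\Vert u(t)\Vert_{H^{s}(\mathbf{R})}\leq CC_{g}\,e^{\mu t}\cdot e^{\beta t}/(1+t^{b})=CC_{g}\,e^{wt}/(1+t^{b})$.

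For the scalar inequality I would split the integral at $\tau=t/2$. On $[t/2,t]$ the weight is comparable to its endpoint value: because $b>0$ we have $1+\tau^{b}\geq\min(1,2^{-b})(1+t^{b})$ there, so $\int_{t/2}^{t}e^{\beta\tau}/(1+\tau^{b})\,d\tau\leq C(1+t^{b})^{-1}\int_{t/2}^{t}e^{\beta\tau}\,d\tau\leq (C/\beta)\,e^{\beta t}/(1+t^{b})$. On $[0,t/2]$ we bound crudely $1+\tau^{b}\geq1$ and integrate, $\int_{0}^{t/2}e^{\beta\tau}\,d\tau\leq\beta^{-1}e^{\beta t/2}$; since $(1+t^{b})e^{-\beta t/2}$ is bounded on $[0,\infty)$, this last quantity is also $\leq C\,e^{\beta t}/(1+t^{b})$. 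Adding the two contributions gives the claimed bound.

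I do not anticipate a real obstacle: the argument is the localized analogue of Lemma \ref{semigestim}, with the extra polynomial factor $1+t^{b}$ surviving the Duhamel convolution precisely because the dominant contribution to the integral comes from $\tau$ near $t$. The only point deserving a line of care is uniformity of the constant in $t$: for $t$ in a fixed compact interval $[0,T_{0}]$ both sides of the scalar inequality are continuous, bounded, and bounded below by positive constants, so it holds there with a suitable constant, while the split-at-$t/2$ estimate handles $t\geq T_{0}$ (and in fact is valid for all $t>0$ directly).
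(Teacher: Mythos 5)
Your proposal is correct and follows essentially the same route as the paper: Duhamel's formula, the localized semigroup bound with $\varepsilon=\tfrac12(w-\lambda_{0})$, and the reduction to the scalar convolution inequality $\int_{0}^{t}e^{-\beta(t-\tau)}(1+\tau^{b})^{-1}\,d\tau\lesssim(1+t^{b})^{-1}$. The only difference is that you justify this last inequality explicitly by splitting the integral at $\tau=t/2$, a step the paper simply asserts, so your write-up is if anything slightly more complete.
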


\begin{proof}
Choose $\epsilon=\frac{1}{2}\left(  w-\lambda_{0}\right)  $ in Lemma
\ref{lemma-semigroup-whitham-localized}, then $\left\Vert e^{tJL}\right\Vert
_{H^{s}}\lesssim e^{\frac{1}{2}\left(  \lambda_{0}+w\right)  t}$. So we have
\begin{align*}
\left\Vert u\left(  t\right)  \right\Vert _{H^{s}}  &  \leq\int_{0}%
^{t}\left\Vert e^{\left(  t-s\right)  JL}\right\Vert _{H^{s}}\left\Vert
g\left(  s\right)  \right\Vert _{H^{s}}ds\\
&  \lesssim\int_{0}^{t}e^{\frac{1}{2}\left(  \lambda_{0}+w\right)  \left(
t-s\right)  }\frac{e^{ws}}{1+s^{b}}ds\\
&  =e^{wt}\int_{0}^{t}e^{\frac{1}{2}\left(  \lambda_{0}-w\right)  \left(
t-s\right)  }\frac{1}{1+s^{b}}ds\lesssim\frac{e^{wt}}{1+t^{b}},
\end{align*}
since $w>\lambda_{0}$.
\end{proof}

Lastly, we prove the semigroup estimates in the space $H^{-1}$, which will be
used in the proof of nonlinear instability by bootstrap arguments. First, we
consider the estimates for periodic perturbations.

\begin{lemma}
\label{lemma-semigroup-H-1}Consider the semigroup $e^{tJL}$ associated with
the solutions of (\ref{kdvln}), where $J,L$ are given in (\ref{definition-J-L}%
). Assume (\ref{symbol-KDV}) or (\ref{symbol-whitham}) and
(\ref{assumption-TW-whitham}), then for any $\varepsilon>0\ $there exist
$C(\varepsilon)>0$ such that
\[
\Vert e^{tJL}u(x)\Vert_{H^{-1}{(\mathbb{T}_{2\pi q})}}\leqslant C(\varepsilon
)e^{\left(  \lambda_{0}+\varepsilon\right)  t}\Vert u(x)\Vert_{H^{-1}%
{(\mathbb{T}_{2\pi q})}},\text{ }\forall t>0,
\]
for any $u\in H^{-1}{(\mathbb{T}_{2\pi q})}$.
\end{lemma}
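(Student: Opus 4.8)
The plan is to use duality together with a conjugation by $L$, as sketched in the introduction. Since $J=-\partial_{x}$ on ${\mathbb{T}_{2\pi q}}$ satisfies $J^{\ast}=-J$ and $L$ is self-adjoint on $L^{2}({\mathbb{T}_{2\pi q}})$, the formal $L^{2}$-adjoint of $JL$ is $-LJ$. So I would first prove that $-LJ$ generates a $C^{0}$ group $e^{-tLJ}$ on $H^{1}({\mathbb{T}_{2\pi q}})$ with
\[
\Vert e^{-tLJ}\Vert_{\mathcal{L}(H^{1})}\le C(\varepsilon)e^{(\lambda_{0}+\varepsilon)|t|},\qquad t\in\mathbf{R},
\]
and then obtain $e^{tJL}$ on $H^{-1}$ as the adjoint group $\big(e^{-tLJ}\big)^{\ast}:H^{-1}\to H^{-1}$ (reflexivity), which extends the group $e^{tJL}$ already available on the dense subspace $H^{s_{0}}\subset H^{-1}$; this yields $\Vert e^{tJL}\Vert_{\mathcal{L}(H^{-1})}=\Vert e^{-tLJ}\Vert_{\mathcal{L}(H^{1})}$ and hence the lemma.

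To bound $e^{-tLJ}=e^{tL\partial_{x}}$ I would use the conjugation $L\partial_{x}=L(\partial_{x}L)L^{-1}$, i.e.\ formally $e^{-tLJ}=Le^{-tJL}L^{-1}$; its rigorous substitute is the intertwining identity $e^{-tLJ}L=Le^{-tJL}$, valid because $w(t):=Le^{-tJL}u_{0}$ solves $\partial_{t}w=-LJw$. Under \eqref{symbol-KDV}, $L$ is a compact perturbation of the invertible operator $\mathcal{M}$; under \eqref{symbol-whitham}--\eqref{assumption-TW-whitham}, $-L$ is a compact perturbation of the positive multiplication operator $c-f^{\prime}(u_{c})$. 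In both cases $L$ is self-adjoint and Fredholm of index $0$ on $L^{2}({\mathbb{T}_{2\pi q}})$, with $\dim\ker L<\infty$ and $\ker L\subset C^{\infty}$ by the bootstrap arguments of Lemmas \ref{lem-eigenfunction-kdv} and \ref{lem-eigenfunction-whitham}. Decompose $H^{1}=\ker L\oplus W$, $W:=\{\phi\in H^{1}:\phi\perp_{L^{2}}\ker L\}$. For $\phi\in W$ I solve $L\psi=\phi$ with $\psi\perp\ker L$; an elliptic bootstrap puts $\psi$ in the right higher space — $\psi\in H^{1+m}$ in the differential case, $\psi\in H^{1}$ in the Whitham case — with $\Vert\psi\Vert_{\ast}\lesssim\Vert\phi\Vert_{H^{1}}$ (bounded inverse theorem on $(\ker L)^{\perp}$). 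Then, using that $L$ maps $H^{1+m}$ (resp.\ $H^{1}$) boundedly into $H^{1}$ and that by Lemma \ref{lemma-trichotomy} and Corollary \ref{cor-upperbound-semigroup} the group has growth at most $e^{(\lambda_{0}+\varepsilon)|t|}$ in both time directions on the admissible range $s\ge s_{0}$ (with $1+m\ge\tfrac m2$, $1\ge0$ admissible),
\[
\Vert e^{-tLJ}\phi\Vert_{H^{1}}=\Vert Le^{-tJL}\psi\Vert_{H^{1}}\lesssim\Vert e^{-tJL}\psi\Vert_{\ast}\le C(\varepsilon)e^{(\lambda_{0}+\varepsilon)|t|}\Vert\psi\Vert_{\ast}\lesssim C(\varepsilon)e^{(\lambda_{0}+\varepsilon)|t|}\Vert\phi\Vert_{H^{1}}.
\]

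For $\phi\in\ker L$ the vector $e^{-tLJ}\phi$ is no longer stationary, so I would invoke Duhamel:
\[
e^{-tLJ}\phi=\phi+\int_{0}^{t}e^{-sLJ}\big(L\partial_{x}\phi\big)\,ds .
\]
The key observation is $L\partial_{x}\phi\in\mathrm{Range}\,L$, since $\langle L\partial_{x}\phi,\chi\rangle=\langle\partial_{x}\phi,L\chi\rangle=0$ for all $\chi\in\ker L$; hence $L\partial_{x}\phi=L\psi_{\phi}$ with $\psi_{\phi}\in C^{\infty}\cap(\ker L)^{\perp}$ and $\Vert\psi_{\phi}\Vert_{\ast}\lesssim\Vert\phi\Vert_{H^{1}}$ (finite dimensions). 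The integrand equals $Le^{-sJL}\psi_{\phi}$, which is bounded by $C(\varepsilon)e^{(\lambda_{0}+\varepsilon)|s|}\Vert\psi_{\phi}\Vert_{\ast}$ exactly as above, and integrating gives $\Vert e^{-tLJ}\phi\Vert_{H^{1}}\lesssim C(\varepsilon)e^{(\lambda_{0}+\varepsilon)|t|}\Vert\phi\Vert_{H^{1}}$. Adding the two estimates over $H^{1}=\ker L\oplus W$ completes the proof.

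The step I expect to be the main obstacle is the rigorous handling of $\ker L$ together with the justification of the conjugation $e^{-tLJ}=Le^{-tJL}L^{-1}$: one has to check that $L$ maps $D(-JL)$ onto $D(-LJ)$, that $L$ restricted to $(\ker L)^{\perp}$ is boundedly invertible between the correct pair of Sobolev spaces (this is precisely where $\min\alpha>0$, \eqref{symbol-KDV} or \eqref{symbol-whitham}, and \eqref{assumption-TW-whitham} enter), and that the resulting $e^{-tLJ}$ is a genuine $C^{0}$ group on $H^{1}$ so that the opening duality identity is legitimate. The Whitham case is the easier one because $L$ is already bounded on $L^{2}$ and on $H^{1}$; in the differential case one must additionally track the (finitely many) elliptic-regularity iterations when $m$ is small.
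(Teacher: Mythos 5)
Your proposal is correct and follows essentially the same route as the paper's proof: reduce by duality to estimating the adjoint group generated by $\mp LJ$ on $H^{1}({\mathbb{T}_{2\pi q}})$, split along $\ker L\oplus R(L)$, transfer the known $H^{1+m}$ (resp.\ $H^{1}$) bounds for $e^{tJL}$ from Lemma \ref{lemma-trichotomy} and Corollary \ref{cor-upperbound-semigroup} to the range component via conjugation by $L$ (your intertwining $e^{-tLJ}L=Le^{-tJL}$ is the paper's identity $e^{tLJ}|_{R(L)}=L_{1}P^{1}e^{tJL}L_{1}^{-1}$), and handle the kernel directions by Duhamel using $LJ(\ker L)\subset R(L)$. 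The only differences are cosmetic: you decompose the initial data and solve $L\psi=\phi$ with an elliptic bootstrap, while the paper decomposes the evolving solution with the projections $P^{0},P^{1}$ and notes $\partial_{t}P^{0}v=0$; and you work with the exact adjoint $e^{-tLJ}$, which just requires the (available) two-sided trichotomy bounds.
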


\begin{proof}
Since $\left(  JL\right)  ^{\ast}=-LJ$, by duality it suffices to show that
\begin{equation}
\Vert e^{tLJ}\Vert_{H^{1}{(\mathbb{T}_{2\pi q})}}\leq C(\varepsilon)e^{\left(
\lambda_{0}+\varepsilon\right)  t},\text{ \ \ }\forall t>0.
\label{estimate-dual-H-1}%
\end{equation}
Denote $P^{0}$ and $P^{1}=1-P^{0}$ to be the projection operators to $\ker L$
and $\left(  \ker L\right)  ^{\perp}=R\left(  L\right)  $ respectively. For
any $v\in H^{1}{(\mathbb{T}_{2\pi q})}$, let $v=P^{0}v+P^{1}v=v_{1}+v_{2}.$
Then the equation $\partial_{t}v=LJv$ can be written as
\begin{equation}
\partial_{t}v_{1}=0,\ \ \ \partial_{t}v_{2}=LJv_{1}+LJv_{2}.
\label{eqn-dual-periodic}%
\end{equation}
Since $L_{1}=L|_{R\left(  L\right)  }:R\left(  L\right)  \rightarrow R\left(
L\right)  $ has a bounded inverse and
\[
LJ|_{R\left(  L\right)  }=L_{1}P^{1}JL_{1}L_{1}^{-1},\ \ e^{tLJ|_{R\left(
L\right)  }}=L_{1}P^{1}e^{tJL}|_{R\left(  L\right)  }L_{1}^{-1},
\]
by Lemma \ref{cor-upperbound-semigroup} we have
\[
\left\Vert e^{tLJ|_{R\left(  L\right)  }}\right\Vert _{H^{1}}\lesssim
\left\Vert e^{tJL}|_{R\left(  L\right)  }\right\Vert _{H^{1+m}}\leq
C(\varepsilon)e^{\left(  \lambda_{0}+\varepsilon\right)  t},\
\]
for the case of (\ref{symbol-KDV}), and
\[
\left\Vert e^{tLJ|_{R\left(  L\right)  }}\right\Vert _{H^{1}}\lesssim
\left\Vert e^{tJL}|_{R\left(  L\right)  }\right\Vert _{H^{1}}\leq
C(\varepsilon)e^{\left(  \lambda_{0}+\varepsilon\right)  t}%
\]
for the case of (\ref{symbol-whitham}) and (\ref{assumption-TW-whitham}). By
(\ref{eqn-dual-periodic}), we have $\left\Vert v_{1}\left(  t\right)
\right\Vert _{H^{1}}=\left\Vert v_{1}\left(  0\right)  \right\Vert _{H^{1}}$
and
\begin{align*}
\left\Vert v_{2}\left(  t\right)  \right\Vert _{H^{1}}  &  \leq\left\Vert
e^{tLJ|_{R\left(  L\right)  }}v_{2}\left(  0\right)  \right\Vert _{H^{1}}%
+\int_{0}^{t}\left\Vert e^{\left(  t-s\right)  LJ|_{R\left(  L\right)  }%
}LJv_{1}\left(  0\right)  \right\Vert _{H^{1}}ds\\
&  \lesssim C(\varepsilon)e^{\left(  \lambda_{0}+\varepsilon\right)  t}\left(
\left\Vert v_{2}\left(  0\right)  \right\Vert _{H^{1}}+\left\Vert v_{1}\left(
0\right)  \right\Vert _{H^{1}}\right) \\
&  \lesssim C(\varepsilon)e^{\left(  \lambda_{0}+\varepsilon\right)
t}\left\Vert v\left(  0\right)  \right\Vert _{H^{1}},
\end{align*}
which implies (\ref{estimate-dual-H-1}) and the lemma.
\end{proof}

In the next lemma, we consider localized perturbations.

\begin{lemma}
\label{lemma-localized-H-1}Consider the semigroup $e^{tJL}$ associated with
the solutions of (\ref{kdvln}), where $J,L$ are given in (\ref{definition-J-L}%
). Assume (\ref{symbol-KDV}) or (\ref{symbol-whitham}) and
(\ref{assumption-TW-whitham}), then for any $\varepsilon>0\ $there exist
$C(\varepsilon)>0$ such that
\[
\Vert e^{tJL}u(x)\Vert_{H^{-1}{(\mathbf{R})}}\leqslant C(\varepsilon
)e^{\left(  \lambda_{0}+\varepsilon\right)  t}\Vert u(x)\Vert_{H^{-1}%
{(}\mathbf{R}{)}},\text{ }\forall t>0,
\]
for any $u\in H^{-1}{(}\mathbf{R}{)}$.
\end{lemma}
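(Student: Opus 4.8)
The plan is to follow the strategy of Lemma \ref{lemma-semigroup-whitham-localized}: pass to Bloch fibers on the line, reduce to a semigroup bound on $\mathbb{T}_{2\pi}$ that is \emph{uniform} in the Bloch parameter, and obtain that fiber bound by the duality/decomposition argument already used for the periodic $H^{-1}$ estimate in Lemma \ref{lemma-semigroup-H-1}.

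\textbf{Step 1 (Bloch reduction).} For $u\in H^{-1}(\mathbf{R})$ write $u(x)=\int_0^1 e^{i\xi x}u_\xi(x)\,d\xi$ with $u_\xi(x)=\Sigma_{n\in\mathbf{Z}}e^{inx}\hat u(n+\xi)\in H^{-1}(\mathbb{T}_{2\pi})$. Since $\langle n+\xi\rangle\thickapprox\langle n\rangle$ uniformly for $\xi\in[0,1]$, the computation in Lemma \ref{ptor} with the homogeneous weights $|k+j|^{2s}$ replaced by the inhomogeneous ones $\langle k+j\rangle^{2s}$ yields the norm equivalence (\ref{norm-local-equivalence}) at $s=-1$; combined with $e^{tJL}u(x)=\int_0^1 e^{i\xi x}e^{tJ_\xi L_\xi}u_\xi(x)\,d\xi$ this reduces the lemma to the uniform fiber estimate
\[
\sup_{\xi\in[0,1]}\bigl\|e^{tJ_\xi L_\xi}\bigr\|_{\mathcal{L}(H^{-1}(\mathbb{T}_{2\pi}))}\leq C(\varepsilon)e^{(\lambda_0+\varepsilon)t},\qquad t>0 .
\]

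\textbf{Step 2 (the fiber estimate).} Since $(J_\xi L_\xi)^\ast=-L_\xi J_\xi$ on $L^2(\mathbb{T}_{2\pi})$, the estimate of Step 1 is equivalent to $\sup_\xi\|e^{-tL_\xi J_\xi}\|_{\mathcal{L}(H^1(\mathbb{T}_{2\pi}))}\leq C(\varepsilon)e^{(\lambda_0+\varepsilon)t}$ for $t>0$, and since $\sigma(J_\xi L_\xi)$ is symmetric under $\lambda\mapsto-\lambda$ it suffices to bound $\|e^{tL_\xi J_\xi}\|_{\mathcal{L}(H^1(\mathbb{T}_{2\pi}))}$ for $t>0$. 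For each \emph{fixed} $\xi$ this is exactly the periodic estimate (\ref{estimate-dual-H-1}): one runs the proof of Lemma \ref{lemma-semigroup-H-1} verbatim with $(J,L)$ replaced by $(J_\xi,L_\xi)$, splitting $H^1(\mathbb{T}_{2\pi})=\ker L_\xi\oplus R(L_\xi)$, freezing the first component, applying Duhamel to the second, and using the invertibility of $L_\xi|_{R(L_\xi)}$ together with the uniform fiber bounds $\|e^{tJ_\xi L_\xi}\|_{H^{1+m}}\leq C(\varepsilon)e^{(\lambda_0+\varepsilon)|t|}$ in case (\ref{symbol-KDV}) (resp.\ $\|e^{tJ_\xi L_\xi}\|_{H^{1}}$ in case (\ref{symbol-whitham})--(\ref{assumption-TW-whitham})) that are established inside the proofs of Lemmas \ref{lemma-semigroup-local-KDV} and \ref{lemma-semigroup-whitham-localized}. (Equivalently, and more directly in the Whitham case, one may use the intertwining $J_\xi e^{tL_\xi J_\xi}=e^{tJ_\xi L_\xi}J_\xi$: if $v(t)=e^{tL_\xi J_\xi}v_0$ then $w(t):=J_\xi v(t)=e^{tJ_\xi L_\xi}(J_\xi v_0)$, and for $\xi$ bounded away from $\{0,1\}$ the operator $J_\xi^{-1}$ gains one derivative, so $\|v(t)\|_{H^1}\lesssim\|w(t)\|_{L^2}\lesssim e^{(\lambda_0+\varepsilon)|t|}\|v_0\|_{H^1}$.) The trouble is that the constants produced this way are \emph{not} uniform in $\xi$: $\|(L_\xi|_{R(L_\xi)})^{-1}\|$ blows up wherever an eigenvalue of $L_\xi$ crosses $0$ (so that $\dim\ker L_\xi$ jumps), and $\|J_\xi^{-1}\|$ blows up as $\xi\to 0$ or $\xi\to1$ on the constant Fourier mode.

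\textbf{Step 3 (uniformity --- the main obstacle).} To get $\xi$-uniform constants I will, following the scheme in the proof of Lemma \ref{lemma-semigroup-whitham-localized} and the remark in the introduction about a ``careful decomposition of the spectral projections of $L_\xi$ near $0$ and away from $0$'', peel off a finite-dimensional part near each $\xi_0$. Fix $\xi_0\in[0,1]$ and choose a contour $\partial\Omega$ (as in Lemma \ref{lemma-semigroup-whitham-localized}) encircling $0$ together with all eigenvalues of $J_{\xi_0}L_{\xi_0}$ at which $L_{\xi_0}$ fails to be positive definite; set $P(\xi)=\frac{1}{2\pi i}\oint_{\partial\Omega}(\lambda-J_\xi L_\xi)^{-1}\,d\lambda$, $Z_\xi=P(\xi)X$, $Y_\xi=(I-P(\xi))X$ for $\xi$ near $\xi_0$. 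By the continuity of $(\lambda-J_\xi L_\xi)^{-1}$ in $\xi$ --- proved in Lemma \ref{lemma-semigroup-whitham-localized} for case (\ref{symbol-whitham}) and in \cite{lin-zeng-Hamiltonian} for case (\ref{symbol-KDV}) --- the $P(\xi)$ are finite rank with $\dim Z_\xi$ bounded, depend continuously on $\xi$, and commute with $e^{tJ_\xi L_\xi}$; moreover $Z_\xi$ absorbs the small eigenmodes of $L_\xi$ and, when $\xi_0\in\{0,1\}$, the near-constant mode, so that on $Y_\xi$ the operator $L_\xi$ is positive definite and $J_\xi$ is boundedly invertible $L^2\to H^1$, both uniformly in $\xi$ near $\xi_0$. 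On $Y_\xi$, Step 2 then gives $\|e^{tJ_\xi L_\xi}|_{Y_\xi}\|_{\mathcal{L}(H^{-1})}\leq C(\varepsilon)e^{(\lambda_0+\varepsilon)|t|}$ with $\xi$-independent constants; on $Z_\xi$ all Sobolev norms are uniformly equivalent and, since $\operatorname{Re}\sigma(J_\xi L_\xi|_{Z_\xi})\leq\lambda_0$, the Cauchy integral $e^{tJ_\xi L_\xi}|_{Z_\xi}=\frac{1}{2\pi i}\oint_\Gamma e^{t\lambda}(\lambda-J_\xi L_\xi|_{Z_\xi})^{-1}\,d\lambda$ over a fixed bounded contour $\Gamma\subset\{\operatorname{Re}\lambda\leq\lambda_0+\varepsilon\}$ (kept off $\sigma(J_\xi L_\xi|_{Z_\xi})$ uniformly in $\xi$ by continuity and compactness of $[0,1]$) gives $\|e^{tJ_\xi L_\xi}|_{Z_\xi}\|\leq C(\varepsilon)e^{(\lambda_0+\varepsilon)t}$ for $t>0$, uniformly. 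Decomposing $u=P(\xi)u+(I-P(\xi))u$ and using that $\|P(\xi)\|_{\mathcal{L}(H^{-1})}$ and $\|I-P(\xi)\|_{\mathcal{L}(H^{-1})}$ are uniformly bounded, one obtains the estimate of Step 1 uniformly for $\xi$ near $\xi_0$, and a covering argument over $\xi_0\in[0,1]$ finishes Step 1 and hence the lemma. The hard part is exactly this last step: one needs the $\ker L_\xi$-degeneracy to be already subsumed in the uniform lower-order bounds $\|e^{tJ_\xi L_\xi}\|_{H^s}$ $(s\geq s_0)$ that are imported, and the residual finite-dimensional obstructions near each $\xi_0$ to vary continuously and be removed by the spectral projection; verifying the boundedness and continuity of these projections (and the resolvent continuity underlying them, especially in the differential case) is where the real work lies.
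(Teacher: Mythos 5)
Your Steps 1--2 coincide with the paper's strategy: Bloch decomposition plus duality reduces the lemma to a $\xi$-uniform bound on $e^{tL_{\xi}J_{\xi}}$ in $H^{1}(\mathbb{T}_{2\pi})$, and when $L_{\xi_{0}}$ is invertible the conjugation $e^{tL_{\xi}J_{\xi}}=L_{\xi}e^{tJ_{\xi}L_{\xi}}L_{\xi}^{-1}$ together with the uniform $H^{1+m}$ (case (\ref{symbol-KDV})) or $H^{1}$ (case (\ref{symbol-whitham})--(\ref{assumption-TW-whitham})) bounds on $e^{tJ_{\xi}L_{\xi}}$ is exactly the paper's Case 1; you also correctly identify the obstruction, namely the loss of uniformity of $L_{\xi}^{-1}$ near those $\xi_{0}$ where $\ker L_{\xi_{0}}\neq\{0\}$. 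The gap is in Step 3, which is precisely the part you leave as an assertion. Projecting off the Riesz projection $P(\xi)$ of $J_{\xi}L_{\xi}$ (the one from Lemma \ref{lemma-semigroup-whitham-localized}) does not remove the obstruction: $Y_{\xi}=(I-P(\xi))X$ is invariant under $e^{tJ_{\xi}L_{\xi}}$ but it is \emph{not} invariant under $L_{\xi}$, so the (correct) statement that the quadratic form of $L_{\xi}$ is uniformly definite on $Y_{\xi}$ does not produce the bounded inverse of $L_{\xi}$ needed to ``run Step 2 on $Y_{\xi}$''; and the claim that ``$Z_{\xi}$ absorbs the small eigenmodes of $L_{\xi}$'' has no basis, since spectral subspaces of $J_{\xi}L_{\xi}$ bear no direct relation to spectral subspaces of $L_{\xi}$. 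The auxiliary claim that $J_{\xi}^{-1}:L^{2}\to H^{1}$ is uniformly bounded on $Y_{\xi}$ near $\xi_{0}\in\{0,1\}$ is likewise unproved (it requires identifying the constant mode inside the range of the adjoint projection), and in the differential case the intertwining variant would additionally need a uniform $L^{2}$ bound on $e^{tJ_{\xi}L_{\xi}}$, which is not among the imported estimates (only $s\geq\frac m2$ is available there).

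What the paper actually does at this point is different and constitutes the real content of the lemma. Near a bad $\xi_{0}$ it decomposes by the Riesz projections $P_{\xi}^{0},P_{\xi}^{1}$ of $L_{\xi}$ itself, associated with the spectrum of $L_{\xi}$ inside and outside a small circle around $0$; these subspaces are $L_{\xi}$-invariant, so $L_{\xi}|_{E_{1}^{\xi}}$ is invertible uniformly in $\xi$, at the price that they are not invariant under the flow $e^{tL_{\xi}J_{\xi}}$. One then writes the coupled system (\ref{eqn-u-0})--(\ref{eqn-u-1}), treats the finite-dimensional block (whose generator has norm $O(a(\epsilon))$) by Gronwall, and for the infinite-dimensional block uses the identity $P_{\xi}^{1}L_{\xi}J_{\xi}|_{E_{1}^{\xi}}=L_{\xi}|_{E_{1}^{\xi}}\,P_{\xi}^{1}J_{\xi}L_{\xi}|_{E_{1}^{\xi}}\,(L_{\xi}|_{E_{1}^{\xi}})^{-1}$ to reduce (\ref{estimate-u-1-semigroup}) to the uniform bound (\ref{estimate-E-1-semigroup}) on the \emph{projected} Hamiltonian flow $e^{tP_{\xi}^{1}J_{\xi}L_{\xi}|_{E_{1}^{\xi}}}$. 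Uniformity of that bound is then obtained by conjugating with the graph transform $\tilde{S}_{\xi}:E_{1}\to E_{1}^{\xi}$ so as to work on the fixed space $E_{1}$, verifying that the conjugated operator $\tilde{J}_{\xi}\tilde{L}_{\xi}$ is again of Hamiltonian form with $n^{-}(\tilde{L}_{\xi})<\infty$, and proving continuity of its resolvent in $\xi$ so that the argument of Lemma \ref{lemma-semigroup-whitham-localized} applies. None of these steps (the $L_{\xi}$-based decomposition, the projected-flow identity, the graph transform and the Hamiltonian structure of $\tilde{J}_{\xi}\tilde{L}_{\xi}$, the resolvent continuity, the Gronwall coupling) appear in your proposal, and your Step 3 as stated would not substitute for them; as you yourself note, this is ``where the real work lies,'' and it is missing.
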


\begin{proof}
By duality, it suffices to show that
\[
\Vert e^{tLJ}\Vert_{H^{1}{(\mathbf{R})}}\leqslant C(\varepsilon)e^{\left(
\lambda_{0}+\varepsilon\right)  t}.
\]
As in the proof of Lemmas \ref{lemma-semigroup-local-KDV} and
\ref{lemma-semigroup-whitham-localized}, it is enough to show that for any
$\varepsilon>0$, there exists $C\left(  \varepsilon\right)  >0$ such that
\begin{equation}
\Vert e^{tL_{\xi}J_{\xi}}u(x)\Vert_{H^{1}(\mathbb{T}_{2\pi})}\leq
C(\varepsilon)e^{\left(  \lambda_{0}+\varepsilon\right)  t}\Vert
u(x)\Vert_{H^{1}(\mathbb{T}_{2\pi})},\ \label{estimate-local-dual}%
\end{equation}
is true for any $\xi\in\lbrack0,1]$ and $u\in H^{1}(\mathbb{T}_{2\pi})$. By
compactness of $\left[  0,1\right]  $, again it suffices to prove that for any
$\xi_{0}\in\lbrack0,1]$, there exist $C,\epsilon>0$ such that
(\ref{estimate-local-dual}) holds for $\xi\in(\xi_{0}-\epsilon,\xi
_{0}+\epsilon)$. We consider two cases below.

Case 1 ($L_{\xi_{0}}$ is invertible): In this case, there exists $\epsilon>0$
such that $L_{\xi}$ is invertible for $\xi\in(\xi_{0}-\epsilon,\xi
_{0}+\epsilon)$. So we have
\[
\left\Vert e^{tL_{\xi}J_{\xi}}\right\Vert _{H^{1}(\mathbb{T}_{2\pi}%
)}=\left\Vert L_{\xi}e^{tJ_{\xi}L_{\xi}}L_{\xi}^{-1}\right\Vert _{H^{1}%
(\mathbb{T}_{2\pi})}\lesssim\left\Vert e^{tJ_{\xi}L_{\xi}}\right\Vert
_{H^{1+m}(\mathbb{T}_{2\pi})}\leq C(\varepsilon)e^{\left(  \lambda
_{0}+\varepsilon\right)  t}%
\]
for the case of (\ref{symbol-KDV}), and
\[
\left\Vert e^{tL_{\xi}J_{\xi}}\right\Vert _{H^{1}(\mathbb{T}_{2\pi}%
)}=\left\Vert L_{\xi}e^{tJ_{\xi}L_{\xi}}L_{\xi}^{-1}\right\Vert _{H^{1}%
(\mathbb{T}_{2\pi})}\lesssim\left\Vert e^{tJ_{\xi}L_{\xi}}\right\Vert
_{H^{1}(\mathbb{T}_{2\pi})}\leq C(\varepsilon)e^{\left(  \lambda
_{0}+\varepsilon\right)  t}%
\]
for the case of (\ref{symbol-whitham}) and (\ref{assumption-TW-whitham}). In
the above, we use the estimate (\ref{semigroup-modu-whitham}) which is true
for both cases of (\ref{symbol-KDV}) and (\ref{symbol-whitham}%
)-(\ref{assumption-TW-whitham}).

Case 2 ($L_{\xi_{0}}$ is not invertible): In this case, $\ker L_{\xi_{0}}%
\neq\left\{  0\right\}  $. It is possible that $L_{\xi}$ is invertible for
$\xi$ near $\xi_{0}$. For example, when $\mathcal{M=-\partial}_{x}^{2}$, it
was shown in Remark 11.1 of \cite{lin-zeng-Hamiltonian} that $L_{\xi}$ has
zero eigenvalue if and only if $\xi=0,1$. However, for $\xi$ near $\xi_{0}$,
there is no uniform (in $\xi$) estimate for $L_{\xi}^{-1}$ and we cannot argue
as in Case 1. We will separate the eigenspaces of $L_{\xi}$ ($\xi\ $%
near\ $\xi_{0}$) for eigenvalues near $0$ and away $0$. Since $0$ is an
isolated eigenvalue of $L_{\xi_{0}}$, so
\[
d_{0}=\min\left\{  \left\vert \lambda\right\vert ,\ \lambda\in\sigma\left(
L_{\xi_{0}}\right)  /\left\{  0\right\}  \right\}  >0\text{.}%
\]
Let $\epsilon>0$ be small enough such that when $\xi\in(\xi_{0}-\epsilon
,\xi_{0}+\epsilon)$,
\[
\Gamma=\left\{  z\ |\ \left\vert z\right\vert =\frac{d_{0}}{2}\right\}
\cap\sigma(L_{\xi})=\emptyset.
\]
Denote $P_{\xi}^{0}=\oint_{\Gamma}\left(  z-L_{\xi}\right)  ^{-1}dz$ to be the
Riesz projection associated with the eigenvalues of $L_{\xi}$ inside $\Gamma,$
and $P_{\xi}^{1}=1-P_{\xi}^{0}$. In particular, $P_{\xi_{0}}^{0},\ P_{\xi_{0}%
}^{1}$ are the projection operators to $\ker L_{\xi_{0}}$ and $R\left(
L_{\xi_{0}}\right)  $ respectively. By choosing $\epsilon$ small, we can
assume that: $\dim R\left(  P_{\xi}^{0}\right)  =\dim\ker L_{\xi_{0}},$%
\[
\min\left\{  \left\vert \lambda\right\vert ,\ \lambda\in\sigma\left(  L_{\xi
}|_{R\left(  P_{\xi}^{1}\right)  }\right)  \right\}  \geq\frac{3}{4}%
d_{0}.\text{ }%
\]
and
\[
\max\left\{  \left\vert \lambda\right\vert ,\ \lambda\in\sigma\left(  L_{\xi
}|_{R\left(  P_{\xi}^{0}\right)  }\right)  \right\}  \leq a\left(
\epsilon\right)  ,
\]
with $a\left(  \epsilon\right)  \rightarrow0$ when $\epsilon\rightarrow0$.
Denote
\[
E_{0}=\ker L_{\xi_{0}},\ E_{1}=\left(  \ker L_{\xi_{0}}\right)  ^{\perp
}=R\left(  L_{\xi_{0}}\right)  ,
\]
and
\[
E_{0}^{\xi}=R\left(  P_{\xi}^{0}\right)  ,\ E_{1}^{\xi}=R\left(  P_{\xi}%
^{1}\right)  .
\]
It is easy to show that $E_{1}^{\xi}$ can be written as a graph of a $O\left(
\epsilon\right)  $-bounded operator $S_{\xi}:E_{1}\rightarrow E_{0}$. That is,
let $\tilde{S}_{\xi}=I+S_{\xi}$, then $E_{1}^{\xi}=\tilde{S}_{\xi}\left(
E_{1}\right)  $. For any $u\in H^{1}(\mathbb{T}_{2\pi})$, let
\[
u=P_{\xi}^{0}u+P_{\xi}^{1}u=u^{0}+u^{1},
\]
then the equation $\partial_{t}u=L_{\xi}J_{\xi}u$ becomes
\begin{equation}
\partial_{t}u^{0}=P_{\xi}^{0}L_{\xi}J_{\xi}u^{0}+P_{\xi}^{0}L_{\xi}J_{\xi
}u^{1}, \label{eqn-u-0}%
\end{equation}%
\begin{equation}
\partial_{t}u^{1}=P_{\xi}^{1}L_{\xi}J_{\xi}u^{0}+P_{\xi}^{1}L_{\xi}J_{\xi
}u^{1}. \label{eqn-u-1}%
\end{equation}
We will show that: For any $\varepsilon>0\ $there exist $C(\varepsilon),$
$\epsilon>0\ $such that
\begin{equation}
\Vert e^{tP_{\xi}^{1}L_{\xi}J_{\xi}|_{E_{1}^{\xi}}}u\Vert_{H^{1}%
(\mathbb{T}_{2\pi})}\leqslant C(\varepsilon)e^{\left(  \lambda_{0}%
+\frac{\varepsilon}{2}\right)  t}\Vert u\Vert_{H^{1}(\mathbb{T}_{2\pi}%
)},\text{ }\forall t>0, \label{estimate-u-1-semigroup}%
\end{equation}
holds for $\xi\in(\xi_{0}-\epsilon,\xi_{0}+\epsilon)$. Assuming
(\ref{estimate-u-1-semigroup}), we now show (\ref{estimate-local-dual}) for
$\xi$ $\in(\xi_{0}-\epsilon,\xi_{0}+\epsilon)$. First, by (\ref{eqn-u-1}) we
have%
\begin{align}
&  \ \ \ \ \ \left\Vert u^{1}\left(  t\right)  \right\Vert _{H^{1}%
}\label{estimate-u-1}\\
&  \leq\left\Vert e^{tP_{\xi}^{1}L_{\xi}J_{\xi}|_{E_{1}^{\xi}}}u^{1}\left(
0\right)  \right\Vert _{H^{1}}+\left\Vert \int_{0}^{t}e^{\left(  t-s\right)
P_{\xi}^{1}L_{\xi}J_{\xi}|_{E_{1}^{\xi}}}P_{\xi}^{1}L_{\xi}J_{\xi}u^{0}\left(
s\right)  ds\right\Vert _{H^{1}}\nonumber\\
&  \leq C(\varepsilon)\left(  e^{\left(  \lambda_{0}+\frac{\varepsilon}%
{2}\right)  t}\Vert u^{1}\left(  0\right)  \Vert_{H^{1}}+\int_{0}%
^{t}e^{\left(  \lambda_{0}+\frac{\varepsilon}{2}\right)  \left(  t-s\right)
}\Vert u^{0}\left(  s\right)  \Vert_{H^{1}}ds\right)  .\nonumber
\end{align}
Since the operator $P_{\xi}^{0}L_{\xi}J_{\xi}$ is finite ranked and
$\left\Vert P_{\xi}^{0}L_{\xi}J_{\xi}\right\Vert _{H^{1}}\leq Ca\left(
\epsilon\right)  $ for some constant $C$, so from (\ref{eqn-u-0}) we have
\begin{equation}
\ \left\Vert u^{0}\left(  t\right)  \right\Vert _{H^{1}}\leq e^{Ca\left(
\epsilon\right)  t}\left\Vert u^{0}\left(  0\right)  \right\Vert _{H^{1}%
}+Ca\left(  \epsilon\right)  \int_{0}^{t}e^{Ca\left(  \epsilon\right)  \left(
t-s\right)  }\Vert u^{1}\left(  s\right)  \Vert_{H^{1}}ds.
\label{estimate-u-0}%
\end{equation}
We choose $\epsilon$ small enough such that $Ca\left(  \epsilon\right)
<\frac{\varepsilon}{2}$. Plugging above into (\ref{estimate-u-1}), we get
\begin{align*}
\left\Vert u^{1}\left(  t\right)  \right\Vert _{H^{1}}  &  \leq C^{\prime
}C(\varepsilon)e^{\left(  \lambda_{0}+\frac{\varepsilon}{2}\right)  t}\left(
\Vert u^{1}\left(  0\right)  \Vert_{H^{1}}+\left\Vert u^{0}\left(  0\right)
\right\Vert _{H^{1}}\right) \\
&  \ \ \ \ \ \ \ +Ca\left(  \epsilon\right)  C(\varepsilon)\int_{0}%
^{t}e^{\left(  \lambda_{0}+\frac{\varepsilon}{2}\right)  \left(  t-s\right)
}\int_{0}^{s}e^{\frac{\varepsilon}{2}\left(  s-\tau\right)  }\Vert
u^{0}\left(  \tau\right)  \Vert_{H^{1}}d\tau ds\\
&  \leq C^{\prime\prime}C(\varepsilon)e^{\left(  \lambda_{0}+\frac
{\varepsilon}{2}\right)  t}\Vert u\left(  0\right)  \Vert_{H^{1}}\\
\ \ \ \  &  \ \ \ \ \ \ \ +Ca\left(  \epsilon\right)  C(\varepsilon)e^{\left(
\lambda_{0}+\frac{\varepsilon}{2}\right)  t}\int_{0}^{t}e^{-\frac{\varepsilon
}{2}\tau}\Vert u^{0}\left(  \tau\right)  \Vert_{H^{1}}\int_{\tau}%
^{t}e^{-\lambda_{0}s}dsd\tau\\
&  \leq C^{\prime\prime}C(\varepsilon)e^{\left(  \lambda_{0}+\frac
{\varepsilon}{2}\right)  t}\Vert u\left(  0\right)  \Vert_{H^{1}}+Ca\left(
\epsilon\right)  C(\varepsilon)e^{\left(  \lambda_{0}+\frac{\varepsilon}%
{2}\right)  t}\int_{0}^{t}e^{-\left(  \lambda_{0}+\frac{\varepsilon}%
{2}\right)  \tau}\Vert u^{0}\left(  \tau\right)  \Vert_{H^{1}}d\tau,
\end{align*}
where $C^{\prime},C^{\prime\prime}$ are some constants independent of
$\epsilon$. Define
\[
y\left(  t\right)  =e^{-\left(  \lambda_{0}+\frac{\varepsilon}{2}\right)
t}\Vert u^{1}\left(  t\right)  \Vert_{H^{1}},
\]
then above inequality becomes
\[
y\left(  t\right)  \leq C^{\prime\prime}C(\varepsilon)\Vert u\left(  0\right)
\Vert_{H^{1}}+Ca\left(  \epsilon\right)  C(\varepsilon)\int_{0}^{t}y\left(
\tau\right)  d\tau.
\]
Choose $\epsilon$ further small such that $Ca\left(  \epsilon\right)
C(\varepsilon)<\frac{\varepsilon}{2}$. Then by Gronwall's inequality, we have
\[
y\left(  t\right)  \lesssim C(\varepsilon)e^{\frac{\varepsilon}{2}t}\Vert
u\left(  0\right)  \Vert_{H^{1}},
\]
that is,
\[
\left\Vert u^{1}\left(  t\right)  \right\Vert _{H^{1}}\lesssim C(\varepsilon
)e^{\left(  \lambda_{0}+\varepsilon\right)  t}\Vert u\left(  0\right)
\Vert_{H^{1}}.
\]
Plugging above estimate into (\ref{estimate-u-0}), we also get
\[
\left\Vert u^{0}\left(  t\right)  \right\Vert _{H^{1}}\lesssim C(\varepsilon
)e^{\left(  \lambda_{0}+\varepsilon\right)  t}\Vert u\left(  0\right)
\Vert_{H^{1}}.
\]
Combining above, we have
\[
\left\Vert u\left(  t\right)  \right\Vert _{H^{1}}\lesssim C(\varepsilon
)e^{\left(  \lambda_{0}+\varepsilon\right)  t}\Vert u\left(  0\right)
\Vert_{H^{1}},
\]
and thus (\ref{estimate-local-dual}) is proved. It remains to prove
(\ref{estimate-u-1-semigroup}). Since%
\[
P_{\xi}^{1}L_{\xi}J_{\xi}|_{E_{1}^{\xi}}=L_{\xi}|_{E_{1}^{\xi}}P_{\xi}%
^{1}J_{\xi}L_{\xi}|_{E_{1}^{\xi}}\left(  L_{\xi}|_{E_{1}^{\xi}}\right)  ^{-1}%
\]
and
\[
\left\Vert \left(  L_{\xi}|_{E_{1}^{\xi}}\right)  ^{-1}\right\Vert
_{H^{1}\rightarrow H^{1+m}}\lesssim\frac{1}{d_{0}},
\]
to prove (\ref{estimate-u-1-semigroup}) it suffices to show that there exist
$C(\varepsilon),$ $\epsilon>0\ $such that
\begin{equation}
\left\Vert e^{tP_{\xi}^{1}J_{\xi}L_{\xi}|_{E_{1}^{\xi}}}\right\Vert _{H^{1+m}%
}\leqslant C(\varepsilon)e^{\left(  \lambda_{0}+\frac{\varepsilon}{2}\right)
t},\text{ }\forall t>0, \label{estimate-E-1-semigroup}%
\end{equation}
for $\xi\in(\xi_{0}-\epsilon,\xi_{0}+\epsilon)$. Again, it is enough to
estimate $e^{tP_{\xi}^{1}J_{\xi}L_{\xi}|_{E_{1}^{\xi}}}$ on the energy space
$H^{\frac{m}{2}}$ and then apply $P_{\xi}^{1}J_{\xi}L_{\xi}|_{E_{1}^{\xi}}$
repeatedly (and by interpolation) to get the estimates for $s>\frac{m}{2}$. We
will study the semigroup generated by $P_{\xi}^{1}J_{\xi}L_{\xi}|_{E_{1}^{\xi
}}$ on $H^{\frac{m}{2}}\ $via the perturbation of the semigroup generated by
$P_{\xi_{0}}^{1}J_{\xi_{0}}L_{\xi_{0}}|_{E_{1}}$. First, we use the transform
$\tilde{S}_{\xi}:E_{1}\rightarrow E_{1}^{\xi}$ to study the conjugated
operators on the same space $E_{1}$. Notice that $\left(  \tilde{S}_{\xi
}\right)  ^{-1}:E_{1}^{\xi}\rightarrow E_{1}$ is exactly the projection
operator $P_{\xi_{0}}^{1}$. Therefore the $\tilde{S}_{\xi}-$conjugated
operator can be written in a Hamiltonian form
\[
\tilde{S}_{\xi}^{-1}P_{\xi}^{1}J_{\xi}L_{\xi}|_{E_{1}^{\xi}}\tilde{S}_{\xi
}=P_{\xi_{0}}^{1}P_{\xi}^{1}J_{\xi}\left(  P_{\xi}^{1}\right)  ^{\ast}\left(
P_{\xi_{0}}^{1}\right)  ^{\ast}\left(  \tilde{S}_{\xi}\right)  ^{\ast}\left(
P_{\xi}^{1}\right)  ^{\ast}L_{\xi}P_{\xi}^{1}\tilde{S}_{\xi}=\tilde{J}_{\xi
}\tilde{L}_{\xi},
\]
where
\[
\tilde{J}_{\xi}=P_{\xi_{0}}^{1}P_{\xi}^{1}J_{\xi}\left(  P_{\xi}^{1}\right)
^{\ast}\left(  P_{\xi_{0}}^{1}\right)  ^{\ast}:\left(  E_{1}\right)  ^{\ast
}\rightarrow E_{1}%
\]
and
\[
\tilde{L}_{\xi}=\left(  \tilde{S}_{\xi}\right)  ^{\ast}\left(  P_{\xi}%
^{1}\right)  ^{\ast}L_{\xi}P_{\xi}^{1}\tilde{S}_{\xi}:E_{1}\rightarrow\left(
E_{1}\right)  ^{\ast}%
\]
are anti-selfadjoint and self-adjoint respectively. We also write
\[
P_{\xi_{0}}^{1}J_{\xi_{0}}L_{\xi_{0}}|_{E_{1}}=P_{\xi_{0}}^{1}J_{\xi_{0}%
}\left(  P_{\xi_{0}}^{1}\right)  ^{\ast}\left(  P_{\xi_{0}}^{1}\right)
^{\ast}L_{\xi_{0}}P_{\xi_{0}}^{1}=\tilde{J}_{\xi_{0}}\tilde{L}_{\xi_{0}},
\]
where%
\[
\tilde{J}_{\xi_{0}}=P_{\xi_{0}}^{1}J_{\xi_{0}}\left(  P_{\xi_{0}}^{1}\right)
^{\ast},\ \tilde{L}_{\xi_{0}}=\left(  P_{\xi_{0}}^{1}\right)  ^{\ast}%
L_{\xi_{0}}P_{\xi_{0}}^{1}.
\]
We note that the spectrum of $\tilde{J}_{\xi}\tilde{L}_{\xi}$ is discrete,
$n^{-}\left(  \tilde{L}_{\xi}\right)  \leq n^{-}\left(  L_{\xi}\right)
<\infty$. Moreover, the maximal growth rate of the eigenvalues of $P_{\xi_{0}%
}^{1}J_{\xi_{0}}L_{\xi_{0}}$%
%TCIMACRO{\TEXTsymbol{\vert}}%
%BeginExpansion
$\vert$%
%EndExpansion
$_{E^{1}}$ is still $\lambda_{0}$. Therefore by the similar proof as in Lemma
\ref{lemma-semigroup-whitham-localized} or Lemma 11.2 in
\cite{lin-zeng-Hamiltonian}, to prove the estimate
(\ref{estimate-E-1-semigroup}) in $H^{\frac{m}{2}}$, it suffices to show that
the resolvent $(\lambda-\tilde{J}_{\xi}\tilde{L}_{\xi})^{-1}\ $is continuous
for $\xi$ near $\xi_{0}$. We have
\[
\tilde{J}_{\xi}\tilde{L}_{\xi}-\tilde{J}_{\xi_{0}}\tilde{L}_{\xi_{0}}=\left(
\tilde{J}_{\xi}-\tilde{J}_{\xi_{0}}\right)  \tilde{L}_{\xi_{0}}+\tilde{J}%
_{\xi}\left(  \tilde{L}_{\xi}-\tilde{L}_{\xi_{0}}\right)  .
\]
In the above,
\begin{align*}
\tilde{J}_{\xi}-\tilde{J}_{\xi_{0}}  &  =P_{\xi_{0}}^{1}\left(  1-P_{\xi}%
^{0}\right)  J_{\xi}\left(  P_{\xi_{0}}^{1}\left(  1-P_{\xi}^{0}\right)
\right)  ^{\ast}-P_{\xi_{0}}^{1}J_{\xi_{0}}\left(  P_{\xi_{0}}^{1}\right)
^{\ast}\\
&  =-P_{\xi_{0}}^{1}P_{\xi}^{0}P_{\xi}^{0}J_{\xi}\left(  P_{\xi_{0}}^{1}%
P_{\xi}^{1}\right)  ^{\ast}-\left(  P_{\xi_{0}}^{1}P_{\xi}^{0}P_{\xi}%
^{0}J_{\xi}\left(  P_{\xi_{0}}^{1}P_{\xi}^{1}\right)  ^{\ast}\right)  ^{\ast
}\\
&  \ \ \ \ \ \ \ -P_{\xi_{0}}^{1}\left(  J_{\xi}-J_{\xi_{0}}\right)  \left(
P_{\xi_{0}}^{1}\right)  ^{\ast}\\
&  =O\left(  \left\vert \xi-\xi_{0}\right\vert \right)  ,
\end{align*}
since $J_{\xi}-J_{\xi_{0}}=O\left(  \left\vert \xi-\xi_{0}\right\vert \right)
,$
\[
P_{\xi_{0}}^{1}P_{\xi}^{0}=O\left(  \left\vert \xi-\xi_{0}\right\vert \right)
,\ \ P_{\xi}^{0}J_{\xi}=O\left(  1\right)  ,\ P_{\xi_{0}}^{1}P_{\xi}%
^{1}=O\left(  1\right)  .
\]
Also,
\begin{align*}
\tilde{L}_{\xi}-\tilde{L}_{\xi_{0}}  &  =\left(  \tilde{S}_{\xi}\right)
^{\ast}\left(  P_{\xi}^{1}\right)  ^{\ast}L_{\xi}P_{\xi}^{1}\tilde{S}_{\xi
}-\left(  P_{\xi_{0}}^{1}\right)  ^{\ast}L_{\xi_{0}}P_{\xi_{0}}^{1}\\
&  =\left(  P_{\xi_{0}}^{1}\right)  ^{\ast}\left(  L_{\xi}-L_{\xi_{0}}\right)
P_{\xi_{0}}^{1}+\left(  P_{\xi}^{1}\tilde{S}_{\xi}-P_{\xi_{0}}^{1}\right)
^{\ast}L_{\xi}P_{\xi}^{1}\tilde{S}_{\xi}\\
\ \ \ \ \  &  \ \ \ \ \ \ +\left(  P_{\xi_{0}}^{1}\right)  ^{\ast}L_{\xi
}\left(  P_{\xi}^{1}\tilde{S}_{\xi}-P_{\xi_{0}}^{1}\right)  ,
\end{align*}
where
\[
P_{\xi}^{1}\tilde{S}_{\xi}-P_{\xi_{0}}^{1}=P_{\xi}^{1}-P_{\xi_{0}}^{1}+P_{\xi
}^{1}S_{\xi}=O\left(  \left\vert \xi-\xi_{0}\right\vert \right)  .
\]
Thus by similar arguments as in the proof of Lemma
\ref{lemma-semigroup-whitham-localized} or Lemma 11.2 in
\cite{lin-zeng-Hamiltonian}, we can show the continuity of the resolvent
$(\lambda-\tilde{J}_{\xi}\tilde{L}_{\xi})^{-1}\ $for $\xi$ near $\xi_{0}$.
This finishes the proof of the Lemma.
\end{proof}

\section{Nonlinear Modulational Instability (multi-periodic)}

In this section, we prove that linearly modulationally unstable traveling
waves are nonlinearly orbitally unstable under multi-periodic perturbations.
First, by the definition (\ref{defn-MI}) of linear modulational instability
and the remark thereafter, there exists an interval $I_{0}\subset\left[
0,1\right]  $ such that for any $k\in I_{0}$, there exists an unstable
solution $e^{\lambda\left(  k\right)  t}e^{ikx}v_{k}\left(  x\right)  $ with
$\operatorname{Re}\lambda\left(  k\right)  >0$ and $2\pi-$periodic
$v_{k}\left(  x\right)  $ to the linearized equation (\ref{kdvln}). So we can
pick an rational number $k_{0}=\frac{p}{q}\in I_{0}$ with $p,q\in\mathbb{N}$.
Then $e^{ik_{0}x}v_{k_{0}}\left(  x\right)  $ is a $2\pi q$-periodic unstable
eigenfunction to the operator $JL$ in $L^{2}{(\mathbb{T}_{2\pi q})}$. It leads
us to consider the nonlinear instability of $u_{c}$ in ${L^{2}(\mathbb{T}%
_{2\pi q})}$.

The proof of Theorem \ref{thm-smooth-f} i) uses the strategy in
\cite{grenier-2000}, by constructing higher order approximation solutions and
then using the energy estimates to overcome the loss of derivative.

The following energy estimate will be used in the proof later. We use
$\mathbb{T}$ for ${\mathbb{T}_{2\pi q}}$ below.

\begin{lemma}
\label{lemma-energy-estimate}Consider the solution of the following equation%
\begin{equation}
\partial_{t}v-c\partial_{x}v+\partial_{x}\mathcal{M}v+\partial_{x}%
(f(u_{c}+U+v)-f(u_{c}+U))=R,\label{eqn-energy-estimate}%
\end{equation}%
\[
v(0,\cdot)=0,
\]
where $U\left(  t,\cdot\right)  \in H^{4}(\mathbb{T})$ and $R\left(
t,\cdot\right)  \in H^{2}(\mathbb{T})$ are given and $f\in C^{\infty
}(\mathbf{R})$.$\ $Assume that
\[
\sup_{0\leq t\leq T}\left\Vert U\right\Vert \left(  t\right)  _{H^{4}%
(\mathbb{T})}+\left\Vert v\right\Vert _{H^{2}(\mathbb{T})}\left(  t\right)
\leq\beta\text{,}%
\]
then there exists a constant $C\left(  \beta\right)  $ such that for $0\leq
t\leq T,$
\begin{equation}
\partial_{t}\left\Vert v\right\Vert _{H^{2}(\mathbb{T})}\leq C\left(
\beta\right)  \left\Vert v\right\Vert _{H^{2}(\mathbb{T})}+\left\Vert
R\right\Vert _{H^{2}(\mathbb{T})}.\label{estimate-H2-energy}%
\end{equation}

\end{lemma}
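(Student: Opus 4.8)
The plan is to run a direct $H^2$ energy estimate on \eqref{eqn-energy-estimate}, using the skew-adjointness of the dispersive part to kill the linear terms and an integration by parts to absorb the single derivative lost by the nonlinearity $\partial_x f$. The computation is cleanest for smooth $v$ --- which is the relevant case in the application, where $v$ will be the difference of the genuine solution (regular by local well-posedness) and the approximate solution --- and then reads as an a~priori estimate; since $\mathbb{T}$ is compact, $\|v\|_{H^2}^2$ is comparable to $\|v\|_{L^2}^2+\|\partial_x^2 v\|_{L^2}^2$, so it suffices to control the time derivatives of these two pieces. As $u_c,U,R,f$ are real, $v$ is real-valued and all inner products below are real.

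First I would reduce the nonlinear difference to a multiplication operator: with $w=u_c+U+v$ and $\bar w=u_c+U$,
\[
f(w)-f(\bar w)=a\,v,\qquad a(t,x):=\int_0^1 f'\!\big(\bar w+\theta v\big)\,d\theta .
\]
Because $f\in C^\infty$, $u_c\in C^\infty$, and $\|U\|_{H^4(\mathbb{T})}+\|v\|_{H^2(\mathbb{T})}\le\beta$, the chain rule together with $H^1(\mathbb{T})\hookrightarrow L^\infty(\mathbb{T})$ shows that $a$ and $\partial_x a$ are bounded in $L^\infty(\mathbb{T})$ and $\partial_x^2 a$ in $L^2(\mathbb{T})$ by a constant $C(\beta)$, while $\partial_x^3 a$ equals an $L^\infty$-bounded coefficient times $\partial_x^3 v$ plus an $L^2(\mathbb{T})$-remainder, again bounded by $C(\beta)$ (here the room in the hypothesis $U\in H^4$ is used). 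Next, the operators $\partial_x$ and $\partial_x\mathcal{M}$ are constant-coefficient Fourier multipliers with purely imaginary symbols $in$ and $in\alpha(n)$ (as $\alpha$ is real and even by (A1)); hence they commute with $\partial_x^2$ and are skew-adjoint on $L^2(\mathbb{T})$, so that upon pairing $\partial_t v=c\partial_x v-\partial_x\mathcal{M}v-\partial_x(av)+R$ with $v$, and $\partial_x^2\partial_t v$ with $\partial_x^2 v$, the contributions of $c\partial_x$ and $\partial_x\mathcal{M}$ vanish identically. What remains is the source term, which contributes $\langle R,v\rangle+\langle\partial_x^2 R,\partial_x^2 v\rangle\le\|R\|_{H^2}\|v\|_{H^2}$, and the nonlinear term.

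The nonlinear term at the $L^2$ level is $-\langle\partial_x(av),v\rangle=\langle av,\partial_x v\rangle=-\tfrac12\int(\partial_x a)v^2\le C(\beta)\|v\|_{L^2}^2$. At the $\dot H^2$ level one must bound $-\langle\partial_x^3(av),\partial_x^2 v\rangle$; expanding $\partial_x^3(av)$ by Leibniz and extracting, in addition, the $\partial_x^3 v$ hidden inside $\partial_x^3 a$, one collects a leading term $b\,\partial_x^3 v$ with $b$ and $\partial_x b$ bounded in $L^\infty(\mathbb{T})$ by $C(\beta)$, plus terms that are products of $v,\partial_x v,\partial_x^2 v$ with coefficients bounded in $L^\infty$ or $L^2$ by $C(\beta)$. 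The decisive point --- and the only genuine obstacle, namely the loss of one derivative --- is the classical cancellation
\[
\langle b\,\partial_x^3 v,\partial_x^2 v\rangle=\tfrac12\int b\,\partial_x\!\big((\partial_x^2 v)^2\big)\,dx=-\tfrac12\int(\partial_x b)(\partial_x^2 v)^2\,dx\le C(\beta)\|v\|_{H^2}^2 ,
\]
while the remaining terms are bounded by $C(\beta)\|v\|_{H^2}^2$ using H\"older's inequality and $\|v\|_{L^\infty}+\|\partial_x v\|_{L^\infty}\lesssim\|v\|_{H^2}$. Adding the $L^2$ and $\dot H^2$ identities gives $\tfrac{d}{dt}\big(\|v\|_{L^2}^2+\|\partial_x^2 v\|_{L^2}^2\big)\le C(\beta)\|v\|_{H^2}^2+2\|R\|_{H^2}\|v\|_{H^2}$, and dividing by $2\|v\|_{H^2}$ --- trivially where $v=0$, or after replacing $\|v\|_{H^2}$ by $(\|v\|_{H^2}^2+\varepsilon^2)^{1/2}$ and letting $\varepsilon\downarrow0$ --- yields \eqref{estimate-H2-energy}. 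Apart from the derivative-loss cancellation just highlighted, everything is routine Moser-type bookkeeping, for which the assumptions $f\in C^\infty$, $U\in H^4$, $R\in H^2$ leave ample room.
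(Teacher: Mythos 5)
Your proposal is correct and follows essentially the same route as the paper's proof: the same reduction $f(u_c+U+v)-f(u_c+U)=\left(\int_0^1 f'(u_c+U+\tau v)\,d\tau\right)v$, the same $L^2$ and $\dot H^2$ pairings in which the skew-adjointness of $c\partial_x$ and $\partial_x\mathcal{M}$ eliminates the dispersive terms, and the same integration-by-parts cancellation to absorb the terms containing $\partial_x^3 v$ (both the one from $a\,\partial_x^3 v$ and the one hidden in $\partial_x^3 a$), with the remaining commutator terms bounded by Moser/Sobolev estimates exactly as in the paper.
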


\begin{proof}
We write
\[
f(u_{c}+U+v)-f(u_{c}+U)=\int_{0}^{1}f^{\prime}\left(  u_{c}+U+\tau v\right)
d\tau v.
\]
First, taking the inner product of (\ref{eqn-energy-estimate}) with $v$ and
integrating by parts, we have
\begin{align*}
\frac{1}{2}\partial_{t}\left\Vert v\right\Vert _{L^{2}(\mathbb{T})}^{2} &
=-\left(  \left(  \int_{0}^{1}f^{\prime}\left(  u_{c}+U+\tau v\right)  d\tau
v\right)  _{x},v\right)  +\left(  R,v\right)  \\
&  =-\frac{1}{2}\int_{\mathbb{T}}\left(  \int_{0}^{1}f^{\prime}\left(
u_{c}+U+\tau v\right)  d\tau\right)  _{x}v^{2}dx+\left(  R,v\right)  \\
&  \leq C\left(  \beta\right)  \left\Vert f\left(  s\right)  \right\Vert
_{C^{2}\left(  \left\vert s\right\vert \leq\Vert u_{c}\Vert_{\infty}%
+C\beta\right)  }\left\Vert v\right\Vert _{L^{2}(\mathbb{T})}^{2}+\left\Vert
R\right\Vert _{L^{2}(\mathbb{T})}\left\Vert v\right\Vert _{L^{2}(\mathbb{T})},
\end{align*}
where in the above we use the fact that $\partial_{x}\mathcal{M}$ is
anti-selfadjoint and
\[
\left\Vert v\right\Vert _{\infty}+\left\Vert \partial_{x}v\right\Vert
_{\infty}\leq C\left\Vert v\right\Vert _{H^{2}(\mathbb{T})}.
\]
Thus
\begin{equation}
\partial_{t}\left\Vert v\right\Vert _{L^{2}(\mathbb{T})}\leq C\left(
\beta\right)  \left\Vert v\right\Vert _{L^{2}(\mathbb{T})}+\left\Vert
R\right\Vert _{L^{2}(\mathbb{T})}.\label{inequality-L2}%
\end{equation}
Next, applying $\partial_{x}^{2}$ to (\ref{eqn-energy-estimate}) and then
taking the inner product with $\partial_{x}^{2}v$, we get
\begin{equation}
\frac{1}{2}\partial_{t}\Vert\partial_{x}^{2}v\Vert_{L^{2}(\mathbb{T})}%
^{2}=-\left(  \left(  \int_{0}^{1}f^{\prime}\left(  u_{c}+U+\tau v\right)
d\tau v\right)  _{xxx},v_{xx}\right)  +\left(  R_{xx},v_{xx}\right)
.\label{eqn-integral-second-derivative}%
\end{equation}
By direct computation and integration by parts, we can show that for $0<t\leq
T$, there exists a constant $C(\beta)$, such that
\[
\left\vert \left(  \left(  \int_{0}^{1}f^{\prime}\left(  u_{c}+U+\tau
v\right)  d\tau v\right)  _{xxx},v_{xx}\right)  \right\vert \leq C(\beta)\Vert
v\Vert_{H^{2}(\mathbb{T})}^{2}.
\]
We only sketch the estimates of the terms involving $\partial_{x}^{3}v$. One
such term is

\ \ \ \ \
\begin{align*}
&  \ \ \ \ \ \left\vert \left(  \int_{0}^{1}f^{\prime}\left(  u_{c}+U+\tau
v\right)  d\tau\ v_{xxx},v_{xx}\right)  \right\vert \\
&  =\left\vert \int_{\mathbb{T}}f^{\prime}\left(  u_{c}+U+\tau v\right)
d\tau\frac{1}{2}\partial_{x}\left(  v_{xx}\right)  ^{2}dx\right\vert
\newline\\
&  =\frac{1}{2}\left\vert \int_{\mathbb{T}}\left(  \int_{0}^{1}f^{\prime
}\left(  u_{c}+U+\tau v\right)  d\tau\right)  _{x}\left(  v_{xx}\right)
^{2}dx\right\vert \\
&  \leq C\left(  \beta\right)  \left\Vert v_{xx}\right\Vert _{L^{2}%
(\mathbb{T})}^{2},
\end{align*}
$\newline$\newline and another term
\[
\left(  \int_{0}^{1}f^{(4)}\left(  u_{c}+U+\tau v\right)  \tau^{3}
d\tau\ vv_{xxx},v_{xx}\right)
\]
can be handled similarly. Thus by (\ref{eqn-integral-second-derivative}), we
have
\[
\partial_{t}\left\Vert v_{xx}\right\Vert _{L^{2}(\mathbb{T})}\leq C\left(
\beta\right)  \left\Vert v_{xx}\right\Vert _{L^{2}(\mathbb{T})}+\left\Vert
R_{xx}\right\Vert _{L^{2}(\mathbb{T})},
\]
and combined with (\ref{inequality-L2}) this proves (\ref{estimate-H2-energy}).
\end{proof}

Now we are ready to prove nonlinear modulational instability for
multi-periodic perturbations.

\begin{proof}
[Proof of Theorem \ref{thm-smooth-f} i)]Let $v_{g}\left(  x\right)  \ $be the
eigenfunction associated with the most unstable eigenvalue $\lambda\ $of
$JL\ $in $L^{2}\left(  \mathbb{T}\right)  $. By Lemmas
\ref{lem-eigenfunction-kdv} and \ref{lem-eigenfunction-whitham}, $v_{g}\in
H^{s}\left(  \mathbb{T}\right)  \ $for any $s\geq0$. We construct an
approximate solution $U^{app}\ $to (\ref{kdvtv}) of the form
\begin{equation}
U^{app}(t,x)=u_{c}(x)+\sum_{j=1}^{N}\delta^{j}U_{j}(t,x),\label{uapp}%
\end{equation}
where
\begin{equation}
U_{1}(t,x)=v_{g}(x)e^{\lambda t}+\bar{v}_{g}(x)e^{\bar{\lambda}t}%
,\label{u1peri}%
\end{equation}
is the most rapidly growing real-valued $2\pi q$-periodic solution of the
linearized equation (\ref{kdvln}). The integer $N$ is chosen such that
$\left(  N+1\right)  \operatorname{Re}\lambda>C\left(  1\right)  $, where the
constant $C\left(  1\right)  $ is the one in the energy estimate
(\ref{estimate-H2-energy}) with $\beta=1$.

Now we construct the terms $U_{2},\cdots,U_{N}$. By the Taylor expansion formula,%

\begin{align}
f(U^{app})-f(u_{c})= & \sum_{k=1}^{N}\frac{f^{(k)}(u_{c})}{k!}\left(
\sum_{j=1}^{N}\delta^{j}U_{j}\right)  ^{k}\label{identity-difference-f}\\
&  +\int_{0}^{1}\frac{f^{(N+1)}\left(  u_{c}+\tau\sum_{j=1}^{N}\delta^{j}%
U_{j}\right)  }{N!}\left(  1-\tau\right)  ^{N}d\tau\left(  \sum_{j=1}%
^{N}\delta^{j}U_{j}\right)  ^{N+1}.\nonumber
\end{align}
Since $u_{c}$ is a stationary solution to (\ref{kdvtv}) and $U_{1}$ satisfies
the linearized equation
\[
\partial_{t}U_{1}-c\partial_{x}U_{1}+\partial_{x}(\mathcal{M}U_{1}+f^{\prime
}(u_{c})U_{1})=0,
\]
by using (\ref{identity-difference-f}) we have
\begin{align*}
&  \partial_{t}U^{app}-c\partial_{x}U^{app}+\partial_{x}(\mathcal{M}%
U^{app}+f(U^{app}))\\
=  & \sum_{j=2}^{N}\delta^{j}\left(  \partial_{t}U_{j}-c\partial_{x}%
U_{j}+\partial_{x}(\mathcal{M}U_{j}+f^{\prime}(u_{c})U_{j})+\partial_{x}%
P_{j}(u_{c};U_{1},U_{2},\cdots,U_{j-1})\right) \\
&  +\sum_{j=N+1}^{N^{N}}\delta^{j}\partial_{x}Q_{j}(u_{c};U_{1},U_{2}%
,\cdots,U_{N})+\partial_{x}\left( g\left(  u_{c};U_{1},U_{2},\cdots
,U_{N}\right)  \left(  \sum_{j=1}^{N}\delta^{j}U_{j}\right)  ^{N+1}\right) ,
\end{align*}
where
\[
g\left(  u_{c};U_{1},U_{2},\cdots,U_{N}\right)  =\int_{0}^{1}\frac
{f^{(N+1)}\left(  u_{c}+\tau\sum_{j=1}^{N}\delta^{j}U_{j}\right)  }{N!}\left(
1-\tau\right)  ^{N}d\tau,
\]
and $P_{j},\ Q_{j}$ are polynomials of $U_{1},\cdots,U_{N}\ $with degree
$j\ $such that
\begin{align*}
&  \sum_{k=2}^{N}\frac{f^{(k)}(u_{c})}{k!}\left(  \sum_{j=1}^{N}\delta
^{j}U_{j}\right)  ^{k}\\
=  & \sum_{j=2}^{N}\delta^{j}P_{j}(u_{c};U_{1},U_{2},\cdots,U_{j-1}%
)+\sum_{j=N+1}^{N^{N}}\delta^{j}Q_{j}(u_{c};U_{1},U_{2},\cdots,U_{N}).
\end{align*}
For $j=2,\cdots,N$, we define $U_{j}$ be the solution of
\begin{equation}%
\begin{cases}
\partial_{t}U_{j}=JLU_{j}+\partial_{x}P_{j}(u_{c};U_{1},U_{2},\cdots
,U_{j-1}),\\
U_{j}(0,\cdot)=0,
\end{cases}
\label{equj}%
\end{equation}

Now we estimate $U_{j}$ for $j\geq2$. First, by Lemmas
\ref{lem-eigenfunction-kdv} and \ref{lem-eigenfunction-whitham}, one has
\begin{equation}
\Vert U_{1}\left(  t\right)  \Vert_{H^{l}(\mathbb{T})}\leq Ce^{(Re\lambda)t},
\label{U1}%
\end{equation}
where $l=s+N$. By (\ref{equj}), $U_{2}$ satisfies the equation
\begin{equation}
\partial_{t}U_{2}=JLU_{2}+\partial_{x}P_{2}(U_{1}),\ U_{2}\left(  0\right)
=0, \label{equ2}%
\end{equation}
where $P_{2}(U_{1})=\frac{1}{2}f^{\prime\prime}(u_{c})U_{1}^{2}$. By
(\ref{U1}), we have
\[
\Vert\partial_{x}P_{2}(U_{1})\Vert_{H^{l-1}(\mathbb{T})}\leqslant C\left(
l\right)  e^{2\operatorname{Re}\lambda t}.
\]
Then, it follows from Lemma \ref{semigestim} that
\[
\Vert U_{2}(t,x)\Vert_{H^{l-1}(\mathbb{T})}\leqslant C\left(  l\right)
e^{2\operatorname{Re}\lambda t}.
\]
By induction, for each $2<j\leqslant N$, we have
\[
\Vert\partial_{x}P_{j}(U_{1},\cdots,u_{j-1})\Vert_{H^{l+1-j}(\mathbb{T}%
)}\leqslant C\left(  j,l\right)  e^{j\operatorname{Re}\lambda t},
\]
and then by Lemma \ref{semigestim}
\[
\Vert U_{j}(t,x)\Vert_{H^{l+1-j}(\mathbb{T})}\leqslant C\left(  j,l\right)
e^{j\operatorname{Re}\lambda t}.
\]
Therefore, there exists a constant $C(N,s)$, such that
\begin{equation}
\Vert U_{j}(t,x)\Vert_{H^{l+1-j}(\mathbb{T})}\leqslant
C(N,s)e^{j\operatorname{Re}\lambda t},\text{ for }j=1,2,\cdots,N.
\label{estimate-U-j}%
\end{equation}

By the construction of $U^{app}$, we have
\begin{equation}
\partial_{t}U^{app}-c\partial_{x}U^{app}+\partial_{x}(\mathcal{M}%
U^{app}+f(U^{app}))=R_{app}, \label{EQUAPP}%
\end{equation}
where
\begin{equation}
R_{app}=\sum_{j=N+1}^{N^{N}}\delta^{j}\partial_{x}Q_{j}(u_{c};U_{1}%
,U_{2},\cdots,U_{N})+\partial_{x}\left( g\left(  u_{c};U_{1},U_{2}%
,\cdots,U_{N}\right)  \left(  \sum_{j=1}^{N}\delta^{j}U_{j}\right)
^{N+1}\right) . \label{Rapp}%
\end{equation}

Let $0<\theta<1$ to be determined and define $T^{\delta}$ by $\delta
e^{\operatorname{Re}\lambda T^{\delta}}=\theta$. Then $T^{\delta}=O\left(
\left\vert \ln\delta\right\vert \right)  $. Choose $s\geq4$ and recall that
$l-N=s$. Then by (\ref{estimate-U-j}), for any $N+1\leq j\leq N^{N}$, we have
\[
\Vert\partial_{x}Q_{j}(u_{c};U_{1},U_{2},\cdots,U_{N})\Vert_{H^{s}%
(\mathbb{T})}\leqslant C(N,s)e^{j\operatorname{Re}\lambda t}%
\]
and thus by (\ref{Rapp})
\begin{equation}
\left\Vert R_{app}\right\Vert _{H^{s}}\leq C\left(  N,s\right)  e^{\left(
N+1\right)  \operatorname{Re}\lambda t},\ \ \text{for }0\leq t\leq T^{\delta}.
\label{estimate-R-app}%
\end{equation}
Let $U_{\delta}(t,x)$ be the solution to (\ref{kdvtv}) with initial value
$u_{c}(x)+\delta U_{1}(0,x)$, and let $v=U_{\delta}-U^{app}$. Then by using
(\ref{EQUAPP}), one finds that $v$ satisfies the equation
\begin{equation}%
\begin{cases}
\partial_{t}v-c\partial_{x}v+\partial_{x}\mathcal{M}v+\partial_{x}%
(f(U^{app}+v)-f(U^{app}))=-R_{app}\\
v(0,\cdot)=0.
\end{cases}
\label{eqn-v-error}%
\end{equation}

Define $T_{1}$ to be the maximal time such that
\[
\left\Vert v\left(  t\right)  \right\Vert _{H^{2}}\leq\frac{1}{2},\ 0\leq
t\leq T_{1}.
\]
We claim that $T_{1}>T^{\delta}$ when $\theta$ is chosen to be small enough.
Suppose otherwise, $T_{1}\leq T^{\delta}$. Then for $0\leq t\leq T_{1}$, we
have
\begin{align*}
\left\Vert U^{app}-u_{c}\right\Vert _{H^{4}} &  \leq\sum_{j=1}^{N}\delta
^{j}\left\Vert U_{j}\right\Vert _{H^{s}}\leq C\left(  N,s\right)  \sum
_{j=1}^{N}\left(  \delta e^{\operatorname{Re}\lambda t}\right)  ^{j}\\
&  \leq\frac{C\theta}{1-\theta}\leq\frac{1}{2},
\end{align*}
when$\ \theta$ is small. Thus we have%
\[
\sup_{0\leq t\leq T_{1}}\left\Vert U^{app}-u_{c}\right\Vert _{H^{4}%
(\mathbb{T})}\left(  t\right)  +\left\Vert v\right\Vert _{H^{2}(\mathbb{T}%
)}\left(  t\right)  \leq1.
\]
By using Lemma \ref{lemma-energy-estimate} for the equation (\ref{eqn-v-error}%
), we have%

\begin{equation}
\partial_{t}\left\Vert v\right\Vert _{H^{2}}\leq C\left(  1\right)  \left\Vert
v\right\Vert _{H^{2}}+\left\Vert R_{app}\right\Vert _{H^{2}},\text{ for }0\leq
t\leq T_{1}. \label{Gronwall-v-H2}%
\end{equation}
Recall that $\left(  N+1\right)  \operatorname{Re}\lambda>C\left(  1\right)
$. So by using (\ref{estimate-R-app}) and the Gronwall's inequality, we obtain
from (\ref{Gronwall-v-H2}) that for $0\leq t\leq T_{1},$
\begin{equation}
\left\Vert v\right\Vert _{H^{2}}\left(  t\right)  \leq C\left(  N,s\right)
e^{\left(  N+1\right)  \operatorname{Re}\lambda t}\text{. }
\label{estimate-v-H2}%
\end{equation}
Thus
\[
\left\Vert v\right\Vert _{H^{2}}\left(  T_{1}\right)  \leq C\theta^{N+1}%
<\frac{1}{2},
\]
when $\theta$ is small. This is in contradiction to the definition of $T_{1}$
and the claim is proved. Moreover, for $0\leq t\leq T^{\delta}<T_{1}$, when
$\theta$ is small enough the estimate (\ref{estimate-v-H2}) is true by above
arguments. So there exist $C_{1},C_{2}>0$ such that%
\begin{align*}
&  \ \ \ \ \ \left\Vert U_{\delta}\left(  T^{\delta},x\right)  -u_{c}\left(
x\right)  \right\Vert _{L^{2}}\\
&  \geq\left\Vert U^{app}\left(  T^{\delta},x\right)  -u_{c}\left(  x\right)
\right\Vert _{L^{2}}-\left\Vert v\left(  T^{\delta},x\right)  \right\Vert
_{H^{2}}\\
&  \geq C_{1}\delta e^{\operatorname{Re}\lambda T^{\delta}}-C_{2}\left(
\delta e^{\operatorname{Re}\lambda T^{\delta}}\right)  ^{2}=C_{1}\theta
-C_{2}\theta^{2}\\
&  \geq\frac{1}{2}C_{1}\theta\text{,}%
\end{align*}
when $\theta\ $is small enough.

It remains to show that above nonlinear instability is also true in the
orbital distance. This can be done by using the argument in (\cite{gss90}). By
the previous estimates, there exists a constant $\widetilde{C}$, such that
\[
\Vert U_{\delta}(t,x)-u_{c}(x)\Vert_{H^{2}(\mathbb{T})}\leqslant
\widetilde{C}\theta,\quad\text{for }0<t\leqslant T^{\delta},
\]
where $\widetilde{C}\ $may depend on $\theta$, but is independent of $\delta$.
Denote
\[
V_{1}\left(  t,x\right)  =e^{-\operatorname{Re}\lambda t}U_{1}\left(
x,t\right)  =2\left(  \operatorname{Re}v_{g}\cos\left(  \operatorname{Im}%
\lambda t\right)  -\operatorname{Im}v_{g}\sin\left(  \operatorname{Im}\lambda
t\right)  \right)  ,
\]
then it is easy to see that for any $s\geq0$, there exist two constants
$c_{1}\left(  s\right)  ,c_{2}\left(  s\right)  >0\ $such that
\[
0<c_{1}\left(  s\right)  \leq\left\Vert V_{1}\right\Vert _{H^{s}}\leq
c_{2}\left(  s\right)  .
\]
Let $V_{1}^{\bot}(t,x)\ $be the projection of $V_{1}(t,x)\ $into $Z^{\bot}%
\ $in the $L^{2}\ $inner product, where
\[
Z^{\bot}=\{v\in L^{2}(\mathbb{T}):\langle v,\partial_{x}u_{c}\rangle=0\}.
\]
\newline Let $h(t)\ $be such that
\[
\Vert U_{\delta}(t,x)-u_{c}(x+h(t))\Vert_{L^{2}(\mathbb{T})}=\inf
_{y\in\mathbb{T}}\ \Vert U_{\delta}(t,x)-u_{c}(x+y)\Vert_{L^{2}(\mathbb{T})}.
\]
Then for $0<t\leqslant T^{\delta}$, we have%
\begin{align*}
&  \ \ \ \ \ \Vert u_{c}(x)-u_{c}(x+h(t))\Vert_{L^{2}(\mathbb{T})}\\
&  \leqslant\Vert U_{\delta}(t,x)-u_{c}(x)\Vert_{L^{2}(\mathbb{T})}+\Vert
U_{\delta}(t,x)-u_{c}(x+h(t))\Vert_{L^{2}(\mathbb{T})}\\
&  \leq2\Vert U_{\delta}(t,x)-u_{c}(x)\Vert_{L^{2}(\mathbb{T})}\leq2\tilde
{C}\theta,
\end{align*}
which implies $|h(t)|=O(\theta)$. So we can write%

\[
u_{c}(x+h)=u_{c}(x)+h\partial_{x}u_{c}(x)+O(\theta^{2}).
\]
This implies that%
\begin{align*}
&  \ \ \ \ \ \ |\langle U_{\delta}(x)-u_{c}(x+h(T^{\delta})),V_{1}^{\bot
}(T^{\delta},x)\rangle|\\
&  \geqslant|\langle U_{\delta}(x)-u_{c}(x),V_{1}^{\bot}(T^{\delta}%
,x)\rangle|-O(\theta^{2})\geq c_{0}\theta,
\end{align*}
for some $c_{0}>0$, when $\theta\ $is small enough. On the other hand, we have%
\begin{align*}
&  \;\ \ \ |\langle U_{\delta}(T^{\delta},x)-u_{c}(x+h(T^{\delta}%
)),V_{1}^{\bot}(T^{\delta},x)\rangle|\\
&  \leqslant\inf_{y\in\mathbb{T}}\Vert U(T^{\delta},x)-u_{c}(x+y)\Vert
_{L^{2}(\mathbb{T})}\Vert V_{1}^{\bot}(T^{\delta},x)\Vert_{L^{2}(\mathbb{T})},
\end{align*}
which implies that
\[
\inf_{y\in\mathbb{T}}\Vert U(T^{\delta},x)-u_{c}(x+y)\Vert_{L^{2}(\mathbb{T}%
)}\geq C^{\prime}\theta,
\]
for some $C^{\prime}>0$. This finishes the proof of Theorem \ref{thm-smooth-f} i).
\end{proof}

\section{Localized Nonlinear Modulational Instability}

\label{localized}

In this section, we prove nonlinear instability for localized perturbations.
Since the linearized operator $JL\ $(defined in (\ref{definition-J-L})) does
not have an unstable eigenvalue in $H^{s}\left(  \mathbf{R}\right)  $, we will
construct unstable initial data in the form of a wave package of unstable
eigenfunctions of $J_{k}L_{k}\ $where $k\ $is near the most unstable frequency
$k_{0}$. Without loss of generality, we can assume that $k_{0}\in\left[
0,\frac{1}{2}\right]  $. Indeed, if $k\in\left[  0,1\right]  \ $is an unstable
frequency in the sense that $J_{k}L_{k}\ $has an unstable eigenvalue, then
$-k,1-k\ $are also unstable frequencies. So we can always pick $k_{0}%
\in\left[  0,\frac{1}{2}\right]  \ $such that $J_{k_{0}}L_{k_{0}}\ $has the
most unstable eigenvalue $\lambda\left(  k_{0}\right)  $. More precisely, for
any $k\in\left[  0,1\right]  $, if $J_{k}L_{k}\ $has an unstable eigenvalue
$\lambda\ $then $\operatorname{Re}\lambda\leq\operatorname{Re}\lambda\left(
k_{0}\right)  $. To construct the unstable wave package, we choose a small
interval $I\subset\left[  0,\frac{1}{2}\right]  \ $and $I\ $is near $k_{0}$.
If $\left\vert I\right\vert \ $is small enough, then any $k\in I\ $is still an
unstable frequency since $J_{k}L_{k}\ $depends on $k\ $smoothly. In the case
when $\lambda_{k_{0}}\ $is a simple eigenvalue of $J_{k_{0}}L_{k_{0}}$, then
by the analytic perturbation theory (\cite{kato-book}) of linear operators,
there is a smooth curve of unstable eigenvalue $\lambda\left(  k\right)  \ $of
$J_{k}L_{k}$, with $k\in I$. Since $\operatorname{Re}\lambda(k)\ $is smooth in
the vicinity of $k_{0}$, and $\operatorname{Re}\lambda\left(  k\right)
\ $obtains its maximum at $k_{0}$, there exists an even number $l\geqslant2$,
such that%

\begin{equation}
\left[  Re(\lambda)\right]  ^{\prime}(k_{0})=\cdots=\left[  Re(\lambda
)\right]  ^{(l-1)}(k_{0})=0,\quad\left[  Re(\lambda)\right]  ^{(l)}(k_{0})<0.
\label{taylor-simple}%
\end{equation}

Now consider the general case when $\lambda_{k_{0}}$ is a multiple eigenvalue
of $J_{k_{0}}L_{k_{0}}$. Since the eigenvalues of $J_{k}L_{k}$ are all
discrete, we can use the analytic perturbation theory (\cite{kato-book}) of
eigenvalues of matrices to study the eigenvalues of $J_{k}L_{k}$ near $k_{0}$.
In this case, the eigenvalues of $J_{k}L_{k}$ near $k_{0}$ can be grouped in
the manner
\[
\left\{  \lambda_{1}\left(  k\right)  ,\cdots,\lambda_{p_{1}}\left(  k\right)
\right\}  ,\left\{  \lambda_{p_{1}+1}\left(  k\right)  ,\cdots,\lambda
_{p_{1}+p_{2}}\left(  k\right)  \right\}  ,\cdots
\]
such that each group constitutes a branch of an analytic function (defined
near $k_{0}$) with a branch point (if $p_{i}\geq2$) at $k=k_{0}$. Assume
$p_{1}\geq2$, then we have the following Puiseux series (see p. 65 of
\cite{kato-book}) for the first group $\left\{  \lambda_{1}\left(  k\right)
,\cdots,\lambda_{p_{1}}\left(  k\right)  \right\}  $
\begin{equation}
\lambda_{h+1}\left(  k\right)  =\lambda\left(  k_{0}\right)  +m_{1}\omega
^{h}\left(  k-k_{0}\right)  ^{1/p_{1}}+m_{2}\omega^{2h}\left(  k-k_{0}\right)
^{2/p_{1}}+\cdots,\label{expansion-Puiseux}%
\end{equation}
where $\omega=\exp\left(  2\pi i/p_{1}\right)  $ and $h=0,1,\cdots,p_{1}-1$.
In the next lemma, we show that the leading order term of $\lambda
_{h+1}\left(  k\right)  $ in (\ref{expansion-Puiseux}) is still given by
$\left(  k-k_{0}\right)  ^{l}$ for an even integer $l$.

\begin{lemma}
\label{lemma-Puisex}Let $p_{1}\geq2$, consider the Puiseux series
(\ref{expansion-Puiseux}) near $k_{0}$. If
\begin{equation}
\max\operatorname{Re}\lambda_{h+1}\left(  k\right)  \leq\operatorname{Re}%
\lambda\left(  k_{0}\right)  ,\ h=0,1,\cdots,p_{1}-1, \label{condition-max}%
\end{equation}
for $k$ in a neighborhood of $k_{0}$, then there exists an even integer $l$
such that
\[
\operatorname{Re}m_{1}=\cdots=\operatorname{Re}m_{lp_{1}-1}%
=0,\ \operatorname{Re}m_{lp_{1}}<0.
\]

\end{lemma}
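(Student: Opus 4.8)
The plan is to repackage the hypothesis (\ref{condition-max}) as a one–sided sign condition on a single analytic function and then read off the coefficients $m_j$. First I would fix a branch of $(k-k_0)^{1/p_1}$ and set $G(z)=\lambda(k_0)+m_1z+m_2z^2+\cdots$, which converges on a disc $|z|<\rho$ because the Puiseux series (\ref{expansion-Puiseux}) converges for $|k-k_0|$ small; then $\lambda_{h+1}(k)=G\big(\omega^h(k-k_0)^{1/p_1}\big)$. Plugging $k>k_0$ into (\ref{condition-max}) gives $\operatorname{Re}G\big(te^{2\pi ih/p_1}\big)\le\operatorname{Re}G(0)$ for every $h$ and every small $t>0$, and plugging $k<k_0$ gives $\operatorname{Re}G\big(te^{(2h+1)\pi i/p_1}\big)\le\operatorname{Re}G(0)$; since the two families of directions together exhaust $\{\ell\pi/p_1:\ell\in\mathbf{Z}\}$, we obtain
\[
\operatorname{Re}G\big(te^{i\pi\ell/p_1}\big)\le\operatorname{Re}G(0),\qquad\forall\,\ell\in\mathbf{Z},\ \ 0<t<\rho .
\]

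Next I would set $N=\min\{\,j\ge1:\operatorname{Re}m_j\ne0\,\}$, so that $\operatorname{Re}m_j=0$ for $1\le j<N$; write $a_N:=\operatorname{Re}m_N\ne0$. Taking $\ell=0$ (so $z=t$ is a genuine branch value, real and positive) yields $\operatorname{Re}(G(t)-G(0))=\sum_{j\ge1}(\operatorname{Re}m_j)t^j=a_Nt^N+O(t^{N+1})\le0$, hence $a_N<0$. The decisive step is to exploit the symmetry $\ell\mapsto-\ell$ of the admissible directions: adding the inequalities at angles $\pm\pi\ell/p_1$ and using $e^{ij\psi}+e^{-ij\psi}=2\cos(j\psi)$ kills every purely imaginary lower–order coefficient and leaves
\[
\operatorname{Re}\big(G(te^{i\pi\ell/p_1})+G(te^{-i\pi\ell/p_1})\big)-2\operatorname{Re}G(0)=\sum_{j\ge1}2(\operatorname{Re}m_j)\cos\tfrac{\pi j\ell}{p_1}\,t^j=2a_N\cos\tfrac{\pi N\ell}{p_1}\,t^N+O(t^{N+1})\le0 ,
\]
so, letting $t\to0^+$ and using $a_N<0$, we get $\cos(\pi N\ell/p_1)\ge0$ for every $\ell\in\mathbf{Z}$.

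Finally I would run an elementary number–theoretic argument: writing $N/p_1=a/b$ in lowest terms, the set $\{\pi N\ell/p_1\bmod2\pi:\ell\in\mathbf{Z}\}$ is an arithmetic progression of step $\gcd(a,2)\pi/b$ which must avoid the arc $(\pi/2,3\pi/2)$ where cosine is negative; checking this according to $b$ and the parity of $a$ forces $b=1$ and $a$ even, i.e.\ $l:=N/p_1$ is an even integer $\ge2$. Then $lp_1=N$, $\operatorname{Re}m_j=0$ for $j<lp_1$, and $\operatorname{Re}m_{lp_1}=a_N<0$, which is the claim. The only remaining point is the degenerate possibility $N=\infty$ (all $m_j$ purely imaginary): here the same symmetrization forces $\operatorname{Re}G\equiv\operatorname{Re}G(0)$ on every ray, hence $\sum_jb_j\sin(j\psi)t^j\equiv0$ for $\psi\in\{\ell\pi/p_1\}$ (where $m_j=ib_j$), hence $m_j=0$ unless $p_1\mid j$, contradicting that (\ref{expansion-Puiseux}) is a genuine $p_1$–cycle. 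I expect the $\ell\mapsto-\ell$ cancellation to be the crux, since without it the lower–order purely imaginary terms $m_j$ ($j<N$) obstruct reading off the leading behavior of $\operatorname{Re}\lambda_{h+1}$; the number–theoretic endgame is routine.
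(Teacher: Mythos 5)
Your proof is correct, and its skeleton is the same as the paper's: evaluate the Puiseux series (\ref{expansion-Puiseux}) along the $2p_{1}$ admissible directions $e^{i\pi\ell/p_{1}}$ (coming from $k>k_{0}$ and $k<k_{0}$), isolate the first coefficient $m_{N}$ with $\operatorname{Re}m_{N}\neq0$, and show the resulting sign conditions at these roots-of-unity angles are incompatible unless $N/p_{1}$ is an even integer and $\operatorname{Re}m_{N}<0$. The genuine difference is how you justify passing to that leading coefficient: the paper deduces directly from (\ref{condition-max}) that $\operatorname{Re}\bigl(m_{N}\omega^{Nh}(k-k_{0})^{N/p_{1}}\bigr)\leq0$, which silently ignores the lower-order coefficients $m_{j}$, $j<N$, that are purely imaginary yet still contribute to $\operatorname{Re}\lambda_{h+1}(k)$ at dominant order along the non-real directions; your $\psi\mapsto-\psi$ averaging kills exactly these contributions and reduces the hypothesis to inequalities on $\operatorname{Re}(m_{j})\cos(j\pi\ell/p_{1})$, after which your cosine-nonnegativity argument on the arithmetic progression of angles is equivalent to the paper's observation that the $2p_{1}$ points $m_{N}e^{i\pi Nj/p_{1}}$ cannot all lie in the closed left half-plane unless $N/p_{1}$ is even. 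You also dispose of the degenerate case in which every $m_{j}$ is purely imaginary (showing it would force $m_{j}=0$ unless $p_{1}\mid j$, hence no branch point, contradicting $p_{1}\geq2$), a case the paper's proof tacitly excludes by assuming such an $m_{N}$ exists. So your version buys a complete justification of the step the paper leaves implicit, at the cost of a slightly longer argument; the two endgames are interchangeable.
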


\begin{proof}
Let $m_{n}$ be the first coefficient in (\ref{expansion-Puiseux}) such that
$\operatorname{Re}m_{n}\neq0$. Then by (\ref{condition-max}), we have
\[
\operatorname{Re}m_{n}\omega^{nh}\left(  k-k_{0}\right)  ^{n/p_{1}}%
\leq0,\ h=0,1,\cdots,p_{1}-1.
\]
This implies that:
\[
\operatorname{Re}m_{n}\exp\left(  \frac{2\pi inh}{p_{1}}\right)
\leq0,\ \ \text{when\ }k-k_{0}>0,
\]
and
\[
\operatorname{Re}m_{n}\exp\left(  \frac{\pi in\left(  2h+1\right)  }{p_{1}%
}\right)  \leq0,\ \text{when\ }k-k_{0}<0,
\]
for $h=0,1,\cdots,p_{1}-1$. So
\begin{equation}
\operatorname{Re}m_{n}\exp\left(  \frac{\pi ni}{p_{1}}j\right)  \leq0,\ 0\leq
j\leq2p_{1}-1. \label{condiiton-points}%
\end{equation}
If $n/p_{1}$ is not an integer, then we must have $m_{n}=0$. Since otherwise
if $m_{n}\neq0$, it is clearly impossible for all the $2p_{1}$ points
\[
m_{n}\exp\left(  \frac{\pi ni}{p_{1}}j\right)  ,\ \ 0\leq j\leq2p_{1}-1\
\]
to stay in the left half complex plane when $n/p_{1}$ is not an integer. If
$n/p_{1}$ is odd, then for (\ref{condiiton-points}) to hold true we must have
$\operatorname{Re}m_{n}=0$. So for $\operatorname{Re}m_{n}\neq0$, we must have
$n/p_{1}=l$ to be even. In this case, (\ref{condiiton-points}) implies that
$\operatorname{Re}m_{lp_{1}}<0.$
\end{proof}

Let $I\subset\left[  0,\frac{1}{2}\right]  $ be a small interval with $k_{0}$
being its right end point. Let $\lambda\left(  k\right)  ,\ k\in I$ be a curve
of unstable eigenvalues of $J_{k}L_{k}$ ending on the right at $\lambda\left(
k_{0}\right)  $, as determined by one of the functions in
(\ref{expansion-Puiseux}) when $\lambda\left(  k_{0}\right)  $ is a multiple
eigenvalue. Then by (\ref{taylor-simple}) when $\lambda\left(  k_{0}\right)  $
is simple or by Lemma \ref{lemma-Puisex} when $\lambda\left(  k_{0}\right)  $
is multiple, we have
\begin{equation}
\operatorname{Re}\lambda\left(  k\right)  -\operatorname{Re}\lambda\left(
k_{0}\right)  =-a_{0}\left(  k-k_{0}\right)  ^{l}+o\left(  \left(
k-k_{0}\right)  ^{l}\right)  , \label{taylor-real-lambda}%
\end{equation}
where $a_{0}<0$ and $l$ is even. Let $v_{1}(k,x)$ be the corresponding
eigenfunction of $\lambda\left(  k\right)  $ for $J_{k}L_{k}$, which depends
on $k$ continuously. By Lemmas \ref{lem-eigenfunction-kdv} and
\ref{lem-eigenfunction-whitham}, $v_{1}(k,x)\in H_{x}^{s}\left(
\mathbb{T}\right)  $ for any $s\geq0$ when $f$ is smooth.

Define the following wave packet consisting of unstable eigenfunctions with
frequencies in $I$,
\begin{equation}
u_{1}\left(  x\right)  =\int_{I}v_{1}(k,x)e^{ikx}dk+\int_{I}\overline
{v_{1}(k,x)}e^{-ikx}dk=2\operatorname{Re}\int_{I}v_{1}(k,x)e^{ikx}dk.
\label{initial data-line}%
\end{equation}

Since $I\cup-I\subset\left[  -\frac{1}{2},\frac{1}{2}\right]  $, so by Lemma
\ref{ptor},
\[
\left\Vert u_{1}\left(  x\right)  \right\Vert _{H^{s}\left(  R\right)  }%
^{2}\lesssim\int_{I}\left\Vert v_{1}(k,x)\right\Vert _{H_{x}^{s}(\mathbb{T}%
)}^{2}dk<\infty\text{. }%
\]

We will choose initial data $U_{\delta}\left(  0\right)  =u_{c}+\delta u_{1}$
to show nonlinear localized instability. First, we follow the arguments in
Section 8.5 of \cite{pazy} to prove the well-posedness of (\ref{kdvtv}) in the
space $u_{c}+H^{s}\left(  R\right)  $. The arguments can be also found in
\cite{kato-lecture} \cite{kato-quasilinear}.

\begin{lemma}
[Well Posedness]\label{lemma-wp} Assuming that $\mathcal{M}\in L\big(H^{\beta
}(\mathbf{R}),L^{2}(\mathbf{R})\big)$ ($\beta$ may be negative ) and $f\in
C^{s+2}\left(  \mathbf{R}\right)  $, where $s\geqslant\max\{1+\beta,1\}$ is an
even integer. Then for every $u_{0}\in B^{s}(\mathbf{R}):=\big\{u_{c}+w:w\in
H^{s}(\mathbf{R})\big\}$, there exists $T>0$, such that the Cauchy problem
\[%
\begin{cases}
\partial_{t}u-c\partial_{x}u+\partial_{x}(\mathcal{M}u+f(u))=0,\quad
\quad(t,x)\in\lbrack0,\infty)\times\mathbf{R}\\
u(0,x)=u_{0}(x)
\end{cases}
\]
has a unique solution $u\in C([0,T],B^{s}(\mathbf{R}))\cap C^{1}%
([0,T],B^{0}(\mathbf{R})).$
\end{lemma}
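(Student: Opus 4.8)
The plan is to recast the Cauchy problem as a quasilinear evolution equation on the fixed Hilbert space $L^{2}(\mathbf{R})$ and invoke Kato's abstract theory for quasilinear evolution equations, in the form presented in \S 8.5 of \cite{pazy} (see also \cite{kato-lecture}, \cite{kato-quasilinear}). First I would substitute $u=u_{c}+w$; since $u_{c}$ solves (\ref{eqn-TW-kdv-type}) it is a stationary solution of the equation, and $w$ satisfies
\[
\partial_{t}w+A(w)w=F(w),\qquad w(0)=u_{0}-u_{c}\in H^{s}(\mathbf{R}),
\]
where $A(v)=\big(f^{\prime}(u_{c}+v)-c\big)\partial_{x}+\partial_{x}\mathcal{M}$ and $F(v)=-\big(f^{\prime}(u_{c}+v)-f^{\prime}(u_{c})\big)\partial_{x}u_{c}$. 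I would take $X=L^{2}(\mathbf{R})$, $Y=H^{s}(\mathbf{R})$, and the isomorphism $S=(1-\partial_{x}^{2})^{s/2}:Y\to X$ (a genuine differential operator of order $s$ since $s$ is even), and verify Kato's hypotheses on an open ball $W\subset H^{s}(\mathbf{R})$ centered at $w_{0}$; note that in all cases $s\ge 2$, so $H^{s}(\mathbf{R})\hookrightarrow W^{1,\infty}(\mathbf{R})$ and $\|\partial_{x}w\|_{L^{\infty}}\lesssim\|w\|_{H^{s}}$.

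The key structural point is that $\partial_{x}\mathcal{M}-c\partial_{x}$ is a constant-coefficient skew-adjoint operator on $L^{2}(\mathbf{R})$ (by (A1), $\mathcal{M}$ is a self-adjoint Fourier multiplier), so it commutes with $S$ and contributes nothing to $\operatorname{Re}\langle A(v)w,w\rangle$ or to $[S,A(v)]$; the only variable-coefficient piece is the transport term $b(x)\partial_{x}$ with $b=f^{\prime}(u_{c}+v)-c$. For the generation and stability hypotheses I would use $2\operatorname{Re}\langle b\partial_{x}w,w\rangle=-\langle b_{x}w,w\rangle$, whence $|\operatorname{Re}\langle A(v)w,w\rangle|\le\tfrac12\|b_{x}\|_{L^{\infty}}\|w\|_{L^{2}}^{2}\le\gamma(W)\|w\|_{L^{2}}^{2}$ uniformly for $v\in W$; so $A(v)+\gamma$ is accretive with $\gamma$ uniform over $W$, and likewise $A(v)^{\ast}+\gamma$. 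For the range condition $R(A(v)+\lambda)=X$ with $\lambda$ large I would distinguish two cases: when $\mathcal{M}$ is of differential type (symbol growing, $\beta\ge1$, as in (\ref{symbol-KDV})) write $A(v)+\lambda=(\partial_{x}\mathcal{M}+\lambda)+b\partial_{x}$ and note that the symbol bound $\sup_{\xi}|\xi|/|i\alpha(\xi)\xi+\lambda|\to0$ as $\lambda\to\infty$ makes $\|b\partial_{x}(\partial_{x}\mathcal{M}+\lambda)^{-1}\|_{L^{2}\to L^{2}}<1$, so $A(v)+\lambda$ is invertible by a Neumann series; when $\mathcal{M}$ is smoothing (as in (\ref{symbol-whitham})) the operator $\partial_{x}\mathcal{M}$ is of order $\le1$ and is subordinate (with relative bound $0$) to the transport operator $b\partial_{x}$, which itself generates a $C_{0}$-group, so $A(v)$ generates a group by a relatively bounded perturbation. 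Thus each $A(v)$ is quasi-$m$-accretive with constants uniform over $W$, so $\{A(v):v\in W\}$ is a stable family on $X$; moreover $Y=H^{s}\subset D(A(v))$ and $A(v):Y\to X$ is bounded since $s\ge 1+\beta$ makes $\partial_{x}\mathcal{M}:H^{s}\to L^{2}$ bounded.

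It remains to check the $S$-conjugation and Lipschitz hypotheses. Since $S$ commutes with $\partial_{x}\mathcal{M}$ and with $\partial_{x}$, one has $SA(v)S^{-1}=A(v)+B(v)$ with $B(v)=[S,f^{\prime}(u_{c}+v)]\,\partial_{x}S^{-1}$; by the Leibniz rule (as $S$ is a differential operator) together with Moser estimates — equivalently by the Kato--Ponce commutator estimate — $B(v)\in L(L^{2})$ with $\|B(v)\|\le C(\|v\|_{H^{s}})$ uniformly on $W$, and $\|B(v)-B(\tilde v)\|_{L(L^{2})}\lesssim\|f^{\prime}(u_{c}+v)-f^{\prime}(u_{c}+\tilde v)\|_{H^{s}}\le C(W)\|v-\tilde v\|_{H^{s}}$; this last estimate is precisely where the hypothesis $f\in C^{s+2}$ enters (mere boundedness of $B$ needs only $f\in C^{s+1}$, but the Moser chain-rule bound controlling the $H^{s}$-difference costs one further derivative on $f$). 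Next, $A(\cdot):W\to L(Y,X)$ is Lipschitz in the $X$-norm, since $\|(A(v)-A(\tilde v))w\|_{L^{2}}\le\|f^{\prime}(u_{c}+v)-f^{\prime}(u_{c}+\tilde v)\|_{L^{2}}\|\partial_{x}w\|_{L^{\infty}}\le C\|v-\tilde v\|_{L^{2}}\|w\|_{H^{s}}$; and $F:W\to Y$ is bounded and Lipschitz in the $X$-norm by Moser estimates, using that $\partial_{x}u_{c}$ and its derivatives are bounded (recall $u_{c}\in C^{\infty}$). With all of Kato's conditions verified on $W$, the abstract theorem supplies $T>0$ and a unique $w\in C([0,T],H^{s}(\mathbf{R}))\cap C^{1}([0,T],L^{2}(\mathbf{R}))$; setting $u:=u_{c}+w\in C([0,T],B^{s}(\mathbf{R}))\cap C^{1}([0,T],B^{0}(\mathbf{R}))$ gives the claimed solution, and uniqueness is part of the same theorem.

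I expect the main obstacle to be the uniform quasi-$m$-accretivity, hence stability, of the family $\{A(v)\}$: when $\mathcal{M}$ is differential with $\beta>1$, the skew term $\partial_{x}\mathcal{M}$ is of \emph{higher} order than the quasilinear transport term and so cannot be handled as subordinate to it; the resolution is to exploit that $\partial_{x}\mathcal{M}-c\partial_{x}$ is constant-coefficient and skew-adjoint, hence invisible to every energy and commutator estimate, and to obtain the range condition from the resolvent symbol estimate above rather than from a perturbation of the transport operator. The remaining verifications (the commutator, Moser, and Lipschitz bounds) are routine.
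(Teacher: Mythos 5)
Your proposal is correct and follows essentially the same route as the paper: both recast the problem for $w=u-u_{c}$ and verify Kato's quasilinear hypotheses (Pazy, \S 6.4/8.5) on $X=L^{2}(\mathbf{R})$, $Y=H^{s}(\mathbf{R})$ with $S=\Lambda^{s}$, using skew-adjointness of the constant-coefficient part plus integration by parts for stability, a commutator/Moser bound for the $S$-conjugation condition, and Lipschitz estimates in the $L^{2}$-norm. The only cosmetic differences are that the paper keeps the zeroth-order term $\partial_{x}u_{c}\int_{0}^{1}f^{\prime\prime}(u_{c}+\tau v)\,d\tau$ inside $A_{1}(v)$ rather than as a forcing $F$, and does the commutator bound by a direct Leibniz expansion of $(1-\partial_{x}^{2})^{n}$, while you additionally spell out the range condition that the paper leaves implicit.
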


\begin{proof}
It is equivalent to prove that the following problem
\begin{equation}%
\begin{cases}
\partial_{t}w-c\partial_{x}w+\partial_{x}(\mathcal{M}w+f(u_{c}+w)-f(u_{c}%
))=0\\
w(0,x)=w_{0}%
\end{cases}
\label{w}%
\end{equation}
has an unique solution $w\in C([0,T],H^{s}(\mathbf{R}))\cap C^{1}%
([0,T],L^{2}(\mathbf{R})).$\newline

Rewrite the equation (\ref{w}) as
\[
\partial_{t}w+\partial_{x}(\mathcal{M}-c)w+f^{\prime}(u_{c}+w)\partial
_{x}w+\partial_{x}u_{c}\int_{0}^{1}f^{\prime\prime}(u_{c}+\tau w)wd\tau=0.
\]
Let $A_{0}=-c\partial_{x}+\partial_{x}\mathcal{M}.$ It is clear that
$D(A_{0})=H^{\sigma}(\mathbf{R})$, where $\sigma=\max\{1+\beta,1\}$.

For any $v\in H^{s}$ with $s\geq\sigma$, define $A_{1}(v):H^{1}(\mathbf{R}%
)\rightarrow L^{2}(\mathbf{R})$ as
\[
A_{1}(v)w=f^{\prime}(u_{c}+v)\partial_{x}w+\partial_{x}u_{c}\int_{0}%
^{1}f^{\prime\prime}(u_{c}+\tau v)wd\tau.
\]

Following the arguments in Section 8.5 of \cite{pazy}, we consider the
equation
\[
\partial_{t}w+A(v)w=0,
\]
where $A(v)=A_{0}+A_{1}(v)$. \newline

Let $B_{r}$ be the ball of radius $r>0$ in $H^{\sigma}(\mathbf{R})$. According
to Theorem 6.4.6 and Section 8.5 in \cite{pazy}, the following four conditions
guarantee the well-posedness of (\ref{w}):

(C1) There exists a constant $k$, such that if $\Vert w_{0}\Vert
_{H^{s}(\mathbf{R})}\leq r$, then
\[
\Vert A(v)w_{0}\Vert_{L^{2}(\mathbf{R})}\leqslant k,
\]
for every $v\in B_{r}$;

(C2) The family $A(v)$, $v\in B_{r}$ is a stable family in $L^{2}(\mathbf{R})$
(see Definition 6.4.1 in P. 200 of \cite{pazy});

(C3) There is an isomorphism of $H^{s}(\mathbf{R})$ onto $L^{2}(\mathbf{R})$
such that for every $v\in B_{r}$, $SA(v)S^{-1}-A(v)$ is a bounded operator in
$L^{2}(\mathbf{R})$ and $\Vert SA(v)S^{-1}-A(v)\Vert\leqslant C_{1}$;

(C4) For each $v\in B_{r}$, $D(A(v))\supset H^{s}(\mathbf{R})$, $A(v)$ is a
bounded linear operator from $H^{s}(\mathbf{R})$ into $L^{2}(\mathbf{R})$ and
\[
\Vert A(v_{1})-A(v_{2})\Vert_{L(H^{s}(\mathbf{R}),L^{2}(\mathbf{R}))}\leq
C_{1}\Vert v_{1}-v_{2}\Vert_{L^{2}(\mathbf{R})}.
\]

Since $\Vert w_{0}\Vert_{H^{s}(\mathbf{R})}<r$ and $\Vert v\Vert
_{H^{s}(\mathbf{R})}<r$, it is straightforward to show that%

\[
\Vert A(v)w_{0}\Vert_{L^{2}(\mathbf{R})}\leqslant C(C_{f},r)\Vert w_{0}%
\Vert_{H^{s}(\mathbf{R})}<C(C_{f},r)r=k.
\]
where
\[
C_{f}=\underset{|s|\leq\Vert u_{c}\Vert_{L^{\infty}(\mathbf{R})}+r}{\max
}(|f^{\prime}(s)|+|f^{\prime\prime}(s)|).
\]
Thus (C1) holds. \newline

Note that $A_{0}$ is skew-adjoint, therefore one has $\langle A_{0}%
w,w\rangle=0$. Also, it is easy to check that
\begin{align*}
\;\;\;\;\langle A_{1}(v)w,w\rangle\newline &  =\int f^{\prime}(u_{c}%
+v)(\partial_{x}w)wdx+\int\partial_{x}u_{c}\int_{0}^{1}f^{\prime\prime}%
(u_{c}+\tau v)wd\tau wdx\newline\\
&  =-\frac{1}{2}\int f^{\prime\prime}(u_{c}+v)\partial_{x}(u_{c}%
+v)w^{2}dx+\int\partial_{x}u_{c}\int_{0}^{1}f^{\prime\prime}(u_{c}+\tau
v)wd\tau wdx\newline\\
&  \geq-(\frac{1}{2}\Vert f^{\prime\prime}(u_{c}+v)\partial_{x}(u_{c}%
+v)\Vert_{L^{\infty}}+\Vert\partial_{x}u_{c}\Vert_{L^{\infty}}\Vert
f^{\prime\prime}(u_{c}+\tau v)\Vert_{L^{\infty}})\Vert w\Vert_{L^{2}}^{2}.
\end{align*}
Therefore $A(v)\ $generates a $C_{0}\ $semigroup from $L^{2}(\mathbf{R})\ $to
$L^{2}(\mathbf{R})\ $and $A(v)\ $is stable for $v\in B_{r}$.

Following the similar argument as in the proof of Lemma 5.5 in \cite{pazy},
one can verify (C3) by letting $S=\Lambda^{s}$, where $\Lambda^{s}\ $is an
operator with Fourier symbol $(1+\xi^{2})^{s/2}$. We only consider $s=2n$,
where $n$ is any positive integer. It is easy to check that
\[
(1-\partial_{x}^{2})^{s/2}=(1-\partial_{x}^{2})^{n}=\sum_{k=0}^{n}C_{k}%
^{n}(-\partial_{x}^{2})^{k},
\]
where $C_{k}^{n}$ is the number of $k$-combinations.

Then one can check that
\[%
\begin{split}
& \Lambda^{2n}(f^{\prime}(u_{c}+v)\Lambda^{-2n}\partial_{x}w)- f^{\prime
}(u_{c}+v)\partial_{x}w\\
=  & \sum_{k=1}^{n}C_{k}^{n}(-\partial^{2}_{x})^{k}(f^{\prime}(u_{c}%
+v)\Lambda^{-2n}\partial_{x}w)- f^{\prime}(u_{c}+v)\sum_{k=1}^{n}C_{k}%
^{n}(-\partial^{2}_{x})^{k}(\Lambda^{-2n}\partial_{x}w).
\end{split}
\]

It follows that
\[
\Vert\Lambda^{2n}f^{\prime}(u_{c}+v)\Lambda^{-2n}\partial_{x}w- f^{\prime
}(u_{c}+v)\partial_{x}w \Vert_{L^{2}(\mathbf{R})}\leq C(C_{f,n})\Vert w
\Vert_{L^{2}(\mathbf{R})},
\]
where
\[
C_{f,2n}=\underset{|s|\leq\Vert u_{c}\Vert_{L^{\infty}(\mathbf{R})}+r}{\max
}(|f^{\prime}(s)|+|f^{\prime\prime}(s)|+\cdots|f^{(2n+1)}(s)|).
\]

Moreover, it is easy to check that
\[%
\begin{split}
& \Vert\Lambda^{2n}(\partial_{x}u_{c}\int_{0} ^{1}f^{\prime\prime}(u_{c}+\tau
v)\Lambda^{-2n}wd\tau)-\partial_{x}u_{c}\int_{0} ^{1}f^{\prime\prime}%
(u_{c}+\tau v)wd\tau\Vert_{L^{2}(\mathbf{R})}\\
\leq & C(C_{f^{\prime},2n}, \Vert u_{c} \Vert_{W^{2n+1,\infty}})\Vert w
\Vert_{L^{2}(\mathbf{R})}.
\end{split}
\]

Thus, (C3) holds. It is trivial to verify (C4). So we complete the proof of
this lemma.
\end{proof}

Now we are ready to show nonlinear localized instability. Let
\begin{equation}
U_{1}(t,x)=2\operatorname{Re}\int_{I}v_{1}(k,x)e^{\lambda(k)t}e^{ikx}%
\,dk,\quad\quad(t,x)\in\mathbf{R}^{+}\times\mathbf{R}, \label{U1loc}%
\end{equation}
It is easy to see that $U_{1}\left(  t,x\right)  \ $is a real-valued solution
to (\ref{kdvln}) with initial data $U_{1}\left(  0,x\right)  =u_{1}\left(
x\right)  $ (defined in (\ref{initial data-line})). Denote $\lambda
_{0}=\operatorname{Re}\lambda\left(  k_{0}\right)  $.

\begin{lemma}
\label{Lemma-U1-estimates} There exist $c_{1}>c_{2}>0$ such that
\begin{equation}
\frac{c_{2}}{(1+t)^{\frac{1}{l}}}e^{\lambda_{0}t}\leq\Vert U_{1}%
(t,x)\Vert_{L^{2}(\mathbf{R})}\leq\frac{c_{1}}{(1+t)^{\frac{1}{l}}}%
e^{\lambda_{0}t},\ \ t\geq0. \label{estimate-U-1}%
\end{equation}

\end{lemma}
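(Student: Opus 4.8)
The plan is to reduce $\Vert U_{1}(t,\cdot)\Vert_{L^{2}(\mathbf{R})}$ to a one–dimensional integral over the frequency band $I$ by means of the norm–equivalence Lemma \ref{ptor}, and then to evaluate that integral by a Laplace (concentration) argument based on the Taylor expansion (\ref{taylor-real-lambda}) of $\operatorname{Re}\lambda(k)$ near $k_{0}$.

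\emph{Step 1: reduction to a frequency integral.} In (\ref{U1loc}) substitute $k\mapsto-k$ in the complex–conjugate term to write $U_{1}(t,x)=\int_{\tilde I}h(k,x,t)e^{ikx}\,dk$, where $\tilde I=I\cup(-I)\subset[-\tfrac12,\tfrac12]$ is contained in an interval of length $\le 1$, $h(k,x,t)=v_{1}(k,x)e^{\lambda(k)t}$ for $k\in I$ and $h(k,x,t)=\overline{v_{1}(-k,x)}\,e^{\overline{\lambda(-k)}\,t}$ for $k\in-I$. Each $h(k,\cdot,t)\in H_{x}^{0}(\mathbb{T})$ with $\Vert h(k,\cdot,t)\Vert_{L^{2}(\mathbb{T})}$ continuous, hence bounded, on the compact set $\tilde I$, so Lemma \ref{ptor} with $s=0$ gives
\[
\Vert U_{1}(t,\cdot)\Vert_{L^{2}(\mathbf{R})}^{2}\thickapprox\int_{\tilde I}\Vert h(k,\cdot,t)\Vert_{L^{2}(\mathbb{T})}^{2}\,dk=2\int_{I}e^{2\operatorname{Re}\lambda(k)t}\,\Vert v_{1}(k,\cdot)\Vert_{L^{2}(\mathbb{T})}^{2}\,dk .
\]
Since $v_{1}(k,\cdot)$ is a nonzero eigenfunction depending continuously on $k$, after shrinking $I$ we have $0<c\le\Vert v_{1}(k,\cdot)\Vert_{L^{2}(\mathbb{T})}\le C$ on $I$, so
\[
\Vert U_{1}(t,\cdot)\Vert_{L^{2}(\mathbf{R})}^{2}\thickapprox e^{2\lambda_{0}t}\,\Phi(t),\qquad\Phi(t):=\int_{I}e^{2(\operatorname{Re}\lambda(k)-\lambda_{0})t}\,dk .
\]

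\emph{Step 2: the Laplace estimate for $\Phi$.} Write $I=[k_{0}-\ell_{0},k_{0}]$; since $l$ is even, $(k-k_{0})^{l}=(k_{0}-k)^{l}$, and by (\ref{taylor-real-lambda}) together with maximality of $\operatorname{Re}\lambda$ at $k_{0}$ there exist $c_{+}\ge c_{-}>0$ such that, after shrinking $\ell_{0}$, $c_{-}(k_{0}-k)^{l}\le\lambda_{0}-\operatorname{Re}\lambda(k)\le c_{+}(k_{0}-k)^{l}$ on $I$. Hence, with $s=k_{0}-k$,
\[
\int_{0}^{\ell_{0}}e^{-2c_{+}ts^{l}}\,ds\ \le\ \Phi(t)\ \le\ \int_{0}^{\ell_{0}}e^{-2c_{-}ts^{l}}\,ds ,
\]
and for any $b>0$ the rescaling $u=(bt)^{1/l}s$ gives $\int_{0}^{\ell_{0}}e^{-bts^{l}}\,ds=(bt)^{-1/l}\int_{0}^{(bt)^{1/l}\ell_{0}}e^{-u^{l}}\,du$. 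For $t\ge1$ the last integral lies between two positive constants (it increases to $\int_{0}^{\infty}e^{-u^{l}}\,du\in(0,\infty)$), so both sides above are $\thickapprox t^{-1/l}\thickapprox(1+t)^{-1/l}$; for $0\le t\le1$ both sides and $(1+t)^{-1/l}$ are trapped between positive constants by compactness. Therefore $\Phi(t)\thickapprox(1+t)^{-1/l}$ uniformly in $t\ge0$. Combining with Step 1 gives $\Vert U_{1}(t,\cdot)\Vert_{L^{2}(\mathbf{R})}^{2}\thickapprox e^{2\lambda_{0}t}(1+t)^{-1/l}$ for all $t\ge0$, which is the two–sided bound (\ref{estimate-U-1}). (Equivalently, by Lemma \ref{lemma-semigroup-whitham-localized} and its KDV analogue $t\mapsto U_{1}(t,\cdot)=e^{tJL}u_{1}$ is continuous into $L^{2}(\mathbf{R})$, so $t\mapsto e^{-2\lambda_{0}t}(1+t)^{1/l}\Vert U_{1}(t,\cdot)\Vert_{L^{2}(\mathbf{R})}^{2}$ is continuous on $[0,\infty)$, strictly positive at every $t$, and by the computation above has a finite positive limit as $t\to\infty$; hence it is bounded between two positive constants.)

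The only genuine work is the Laplace/concentration estimate of $\Phi(t)$ and making the comparison uniform all the way down to $t=0$ — this is why the polynomial factor is $(1+t)^{-1/l}$ rather than $t^{-1/l}$, and why the regime $t\in[0,1]$ is disposed of by a separate compactness remark; together with the elementary bookkeeping of Step 1 that folds the two conjugate wave packets onto a single frequency interval of length $\le1$ so that Lemma \ref{ptor} is applicable. Everything else (boundedness of $\Vert v_{1}(k,\cdot)\Vert_{L^{2}(\mathbb{T})}$ below and above, positivity of $\Phi$) is routine.
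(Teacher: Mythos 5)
Your proof is correct and follows essentially the same route as the paper's: reduce $\Vert U_{1}(t)\Vert_{L^{2}(\mathbf{R})}^{2}$ to a frequency integral over the band via Lemma \ref{ptor}, then estimate that integral by a Laplace-type argument using (\ref{taylor-real-lambda}), treating $t\le 1$ and $t\ge 1$ separately. The only quibble is the final identification: since you (correctly) keep the factor $2$ in the exponent, your computation gives $\Vert U_{1}(t)\Vert_{L^{2}(\mathbf{R})}\thickapprox e^{\lambda_{0}t}(1+t)^{-1/(2l)}$ after taking square roots, i.e. (\ref{estimate-U-1}) with $l$ relabeled as the even integer $2l$ --- a harmless discrepancy that the paper's own proof also glosses over (it writes $e^{\operatorname{Re}\lambda(k)t}$ instead of $e^{2\operatorname{Re}\lambda(k)t}$ inside the squared norm) and which is immaterial for every later use of the lemma.
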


\begin{proof}
By Lemma \ref{ptor}, we have
\[
\Vert U_{1}(t,x)\Vert_{L^{2}(\mathbf{R})}^{2}\thickapprox\int_{I}\Vert
v_{1}(k,x)\Vert_{L_{x}^{2}(\mathbb{T}_{2\pi})}^{2}e^{\operatorname{Re}%
\lambda(k)t}\,dk\thickapprox\int_{I}e^{\operatorname{Re}\lambda(k)t}\,dk.
\]
Denote $I=\left[  k_{0}-\eta,k_{0}\right]  $, $\eta>0$. By
(\ref{taylor-real-lambda}), when $\eta$ is small enough, for$\ $any $k\in I,$
we have
\[
-2a_{0}\left(  k-k_{0}\right)  ^{l}\leq\operatorname{Re}\lambda\left(
k\right)  -\operatorname{Re}\lambda\left(  k_{0}\right)  \leq-\frac{1}{2}%
a_{0}\left(  k-k_{0}\right)  ^{l}.
\]
So letting $k_{1}=k-k_{0}$, then
\[
e^{\lambda_{0}t}\int_{-\eta}^{0}e^{-2a_{0}k_{1}^{l}t}\,dk_{1}\leq\int%
_{I}e^{\operatorname{Re}\lambda(k)t}\,dk\leq e^{\lambda_{0}t}\int_{-\eta}%
^{0}e^{-\frac{1}{2}a_{0}k_{1}^{l}t}\,dk_{1}%
\]
When $0\leq t\leq1$, it is easy to estimate that
\[
\int_{-\eta}^{0}e^{-\frac{1}{2}a_{0}k_{1}^{l}t}\,dk_{1}\leq\eta,\ \ \int%
_{-\eta}^{0}e^{-2a_{0}k_{1}^{l}t}\,dk_{1}\geq e^{-2a_{0}\eta^{l}}\eta.
\]
When $t>1$, by direct calculations we have
\begin{equation}
\int_{-\eta}^{0}e^{-\frac{1}{2}a_{0}k_{1}^{l}t}\,dk_{1}=\frac{1}{t^{\frac
{1}{l}}}\int_{0}^{\eta^{l}t}{\frac{p^{\frac{1}{l}-1}e^{-\frac{1}{2}a_{0}p}}%
{l}}\,dp\leq\frac{c_{0}}{t^{\frac{1}{l}}}, \label{1/m}%
\end{equation}
where
\[
c_{0}=\frac{1}{l}\int_{0}^{+\infty}p^{\frac{1}{l}-1}e^{-\frac{1}{2}a_{0}%
p}\,dp<\infty.
\]
Similarly,
\[
\int_{-\eta}^{0}e^{-\frac{1}{2}a_{0}k_{1}^{l}t}\,dk_{1}\geq\frac{c_{0}%
^{\prime}}{t^{\frac{1}{l}}},\ \ c_{0}^{\prime}=\frac{1}{l}\int_{0}^{\eta^{l}%
}p^{\frac{1}{l}-1}e^{-2a_{0}p}\,dp.
\]
Combining above, we get the estimate (\ref{estimate-U-1}).
\end{proof}

\begin{proof}
[Proof of Theorem \ref{thm-smooth-f} ii)]Following the same way as in the
periodic case, we construct an approximate solution $U^{app}$ to (\ref{kdvtv})
of the form
\begin{equation}
U^{app}=u_{c}+\sum_{j=1}^{N}\delta^{j}U_{j}, \label{uapp-line}%
\end{equation}
where $U_{1}$ is defined in (\ref{U1loc}). By Lemma \ref{Lemma-U1-estimates}
\[
\Vert U_{1}(t,x)\Vert_{H^{s}(\mathbf{R})}\lesssim C(s)\frac{e^{\lambda_{0}t}%
}{(1+t)^{\frac{1}{l}}}.
\]
Following the same arguments as in the proof of Theorem \ref{thm-smooth-f} i),
for $j=1,2,\cdots,N$, we solve $U_{j}$ by the equation
\[
\partial_{t}U_{j}=JLU_{j}+\partial_{x}P_{j}(U_{1},U_{2},\cdots,U_{j-1}%
),\ U_{j}|_{t=0}=0.
\]
By Lemma \ref{lemma-semigroup-inhomo-line}, we obtain
\begin{equation}
\Vert U_{j}(t,x)\Vert_{H^{s}{(\mathbf{R})}}\leqslant C_{j}\left(
\frac{e^{\lambda_{0}t}}{(1+t)^{\frac{1}{l}}}\right)  ^{j}. \label{ujsup}%
\end{equation}
Define $T_{\delta}$ by the equation
\[
\frac{\delta e^{\lambda_{0}T_{\delta}}}{(1+T_{\delta})^{\frac{1}{l}}}=\theta,
\]
where $\theta$ is to be determined. Then $T_{\delta}=O\left(  \left\vert
\ln\delta\right\vert \right)  $. The energy estimate in Lemma
\ref{lemma-energy-estimate} is still true in $H^{2}\left(  \mathbf{R}\right)
$. Let $U_{\delta}\left(  x,t\right)  $ be the solution of (\ref{kdvtv}) with
initial data $U_{\delta}\left(  x,0\right)  =u_{c}+\delta u_{1}\left(
x\right)  $. Then by the same arguments as in the periodic case, when $\theta$
is small enough, we have
\begin{align*}
&  \left\Vert U_{\delta}\left(  T^{\delta},x\right)  -u_{c}\left(  x\right)
\right\Vert _{L^{2}\left(  R\right)  }\\
\geq & C_{1}\frac{\delta e^{\operatorname{Re}\lambda T^{\delta}}}%
{(1+T_{\delta})^{\frac{1}{l}}}-C_{2}\left(  \frac{\delta e^{\operatorname{Re}%
\lambda T^{\delta}}}{(1+T_{\delta})^{\frac{1}{l}}}\right)  ^{2}=C_{1}%
\theta-C_{2}\theta^{2}\\
\geq & \frac{1}{2}C_{1}\theta\text{.}%
\end{align*}
This proves the nonlinear instability in the localized space.
\end{proof}

\section{Nonlinear instability by bootstrap arguments}

\label{section-bootstrap}

The proof of nonlinear instability by constructing higher order approximate
solutions requires the nonlinear term $f\left(  s\right)  $ in (\ref{kdvtv})
to be in $C^{\infty}(\mathbf{R})$. In this section, we give a different proof
by using bootstrap arguments, for the case when $f$ is not smooth. We assume
that (\ref{kdvtv}) is locally well-posed in the energy space $H^{\frac{m}{2}}%
$, which is certainly satisfied under the assumption (\ref{assumption-f}) (see
Lemma \ref{lemma-wp}). We will prove nonlinear instability for the nonlinear
term $f\in C^{1}(\mathbf{R})$ with the growth conditions
(\ref{assumption-f-bootstrap}) and (\ref{assumption-F-bootstrap}). The
bootstrap arguments are done in three steps. First, we use the energy
conservation to control the growth of the energy norm in $H^{\frac{m}{2}}$
from the assumed $L^{2}$ growth. Then we use the semigroup estimates in
$H^{-1}$ to control the growth of $H^{-1}$ norm of the nonlinear part of the
solution. Lastly, the estimates are closed by using the interpolation of
$L^{2}$ by $H^{\frac{m}{2}}$ and $H^{-1}$.

\begin{proof}
[Proof of Theorem \ref{thm-bootstrap-instability}]We only give the proof for
localized perturbations since it is similar for multiple periodic perturbations.

Step 1. (bootstrap from $L^{2}$ to $H^{\frac{m}{2}}$).

The nonlinear equation for the perturbation $u$ of $u_{c}\ $in the traveling
frame $\left(  x-ct,t\right)  $ is
\begin{equation}
\partial_{t}u-JLu+\partial_{x}\left(  f\left(  u+u_{c}\right)  -f\left(
u_{c}\right)  -f^{\prime}\left(  u_{c}\right)  u\right)  =0,
\label{eqn-localized-bootstrap}%
\end{equation}
where $J,L$ are defined in (\ref{definition-J-L}). For any $\delta>0$, we
choose the initial data $u_{\delta}\left(  0\right)  =\delta u_{1}$, where
$u_{1}$ is defined in (\ref{initial data-line}). Then by Lemma
\ref{Lemma-U1-estimates},
\[
\frac{C_{0}\delta e^{\lambda_{0}t}}{(1+t)^{\frac{1}{l}}}\leq\Vert
e^{tJL}u_{\delta}\left(  0\right)  \Vert_{L^{2}(\mathbf{R})}\leq\frac
{C_{1}\delta e^{\lambda_{0}t}}{(1+t)^{\frac{1}{l}}},
\]
for some $C_{0},C_{1}>0$, $l\in\mathbf{N}$, where $\lambda_{0}$ is the largest
growth rate defined in (\ref{defn-largest-lambda-0}). Define $T_{1}>0$ to be
the maximal time such that
\[
\left\Vert u_{\delta}\left(  t\right)  \right\Vert _{L^{2}}\leq\frac
{2C_{1}\delta e^{\lambda_{0}t}}{(1+t)^{\frac{1}{l}}},\ 0\leq t\leq T_{1}.
\]
where $u_{\delta}\left(  t\right)  $ is the solution of
(\ref{eqn-localized-bootstrap}) with the initial data $u_{\delta}\left(
0\right)  $. Define $T_{\delta}$ by
\[
\frac{\delta e^{\lambda_{0}T_{\delta}}}{(1+T_{\delta})^{\frac{1}{l}}}=\theta,
\]
where $\theta>0$ is to be determined. We will show that $T_{1}>T_{\delta}$
when $\theta$ is small. Suppose otherwise $T_{1}\leq T_{\delta}$. The equation
(\ref{eqn-localized-bootstrap}) has the conserved energy-momentum functional
\[
H\left(  u\right)  =\frac{1}{2}\left\langle Lu,u\right\rangle -\int%
_{\mathbf{R}}\left(  F\left(  u+u_{c}\right)  -F\left(  u_{c}\right)
-f\left(  u_{c}\right)  u-\frac{1}{2}f^{\prime}\left(  u_{c}\right)
u^{2}\right)  dx,
\]
since (\ref{eqn-localized-bootstrap}) can be written in the Hamiltonian form
$\partial_{t}u=\partial_{x}H^{\prime}\left(  u\right)  $. By the assumption
(\ref{assumption-symbol-bootstrap}), there exists $c_{0}>0$ such that%
\[
\left\langle \mathcal{M}u,u\right\rangle \geq c_{0}\left\Vert u\right\Vert
_{H^{\frac{m}{2}}}^{2},\ \text{for any }u\in H^{\frac{m}{2}}\text{. }%
\]
Let $T_{2}$ be the maximal time such that
\begin{equation}
\left\Vert u_{\delta}\left(  t\right)  \right\Vert _{H^{\frac{m}{2}}}\leq
\frac{C_{2}\delta e^{\lambda_{0}t}}{(1+t)^{\frac{1}{l}}},\ 0\leq t\leq T_{2},
\label{bootstrap-energy}%
\end{equation}
where
\[
C_{2}=\frac{2}{\sqrt{c_{0}}}\left(  8\left\vert c+f^{\prime}\left(
u_{c}\right)  \right\vert _{\infty}C_{1}^{2}+\frac{3\left\vert \left\langle
Lu_{1},u_{1}\right\rangle \right\vert }{a_{0}^{2}}\right)  ^{\frac{1}{2}},
\]
with
\[
a_{0}=\min_{t\geq0}\frac{e^{\lambda_{0}t}}{(1+t)^{\frac{1}{l}}}>0\text{. }%
\]
We claim that $T_{2}>T_{1}$. Suppose otherwise $T_{2}\leq T_{1}$. Then by the
energy conservation $H\left(  u_{\delta}\left(  t\right)  \right)  =H\left(
u_{\delta}\left(  0\right)  \right)  $ and the assumption
(\ref{assumption-F-bootstrap}), we have
\begin{align}
c_{0}\left\Vert u_{\delta}\left(  t\right)  \right\Vert _{H^{\frac{m}{2}}%
}^{2}  &  \leq\left\langle \mathcal{M}u_{\delta}\left(  t\right)  ,u_{\delta
}\left(  t\right)  \right\rangle \label{estimate-energy}\\
&  \leq\left\vert c+f^{\prime}\left(  u_{c}\right)  \right\vert _{\infty
}\left\Vert u_{\delta}\left(  t\right)  \right\Vert _{L^{2}}^{2}+\left\langle
Lu_{\delta}\left(  0\right)  ,u_{\delta}\left(  0\right)  \right\rangle
\nonumber\\
&  \ \ \ \ \ +O\left(  \left\Vert u_{\delta}\left(  t\right)  \right\Vert
_{H^{\frac{m}{2}}}^{p_{2}}+\left\Vert u_{\delta}\left(  0\right)  \right\Vert
_{H^{\frac{m}{2}}}^{p_{2}}\right)  ,\nonumber
\end{align}
for any $0\leq t\leq T_{2}$. Here, we use the fact that $L^{P_{2}}\left(
\mathbf{R}\right)  \hookrightarrow H^{\frac{m}{2}}\left(  \mathbf{R}\right)  $
when $\frac{m}{2}\geq\frac{1}{2}$. For any $t\leq T_{2}\leq T_{1}\leq
T_{\delta}$, by (\ref{bootstrap-energy}) we have
\[
\left\Vert u_{\delta}\left(  t\right)  \right\Vert _{H^{\frac{m}{2}}}\leq
\frac{C_{2}\delta e^{\lambda_{0}t}}{(1+t)^{\frac{1}{l}}}\leq\frac{C_{2}\delta
e^{\lambda_{0}T_{\delta}}}{(1+T_{\delta})^{\frac{1}{l}}}=C_{2}\theta\text{. }%
\]
Therefore (\ref{estimate-energy}) implies that for $0\leq t\leq T_{2}$, we
have
\begin{align*}
c_{0}\left\Vert u_{\delta}\left(  t\right)  \right\Vert _{H^{\frac{m}{2}}%
}^{2}  &  \leq\left\vert c+f^{\prime}\left(  u_{c}\right)  \right\vert
_{\infty}\left(  \frac{2C_{1}\delta e^{\lambda_{0}t}}{(1+t)^{\frac{1}{l}}%
}\right)  ^{2}+\delta^{2}\left\vert \left\langle Lu_{1},u_{1}\right\rangle
\right\vert \\
\ \ \ \  &  \ \ \ \ \ +C^{\prime}C_{2}\theta^{p_{2}-2}\left(  \left\Vert
u_{\delta}\left(  t\right)  \right\Vert _{H^{\frac{m}{2}}}^{2}+\delta
^{2}\left\Vert u_{1}\right\Vert _{H^{\frac{m}{2}}}^{2}\right)  ,
\end{align*}
and thus by choosing $\theta$ small enough
\begin{align*}
\left\Vert u_{\delta}\left(  t\right)  \right\Vert _{H^{\frac{m}{2}}}^{2}  &
\leq\frac{1}{c_{0}}\left(  2\left\vert c+f^{\prime}\left(  u_{c}\right)
\right\vert _{\infty}\left(  \frac{2C_{1}\delta e^{\lambda_{0}t}}%
{(1+t)^{\frac{1}{l}}}\right)  ^{2}+3\delta^{2}\left\vert \left\langle
Lu_{1},u_{1}\right\rangle \right\vert \right) \\
&  \leq\frac{1}{c_{0}}\left(  8\left\vert c+f^{\prime}\left(  u_{c}\right)
\right\vert _{\infty}C_{1}^{2}+\frac{3\left\vert \left\langle Lu_{1}%
,u_{1}\right\rangle \right\vert }{a_{0}^{2}}\right)  \left(  \frac{\delta
e^{\lambda_{0}t}}{(1+t)^{\frac{1}{l}}}\right)  ^{2}\\
&  =\frac{1}{4}C_{2}^{2}\left(  \frac{\delta e^{\lambda_{0}t}}{(1+t)^{\frac
{1}{l}}}\right)  ^{2},\ \
\end{align*}
for $0\leq t\leq T_{2}.$This is in contradiction to the definition of $C_{2}$
and shows that $T_{2}>T_{1}$.

Step 2 (bootstrap from $L^{2}$ to $H^{-1}$).

The solution $u_{\delta}\left(  t\right)  $ to (\ref{eqn-localized-bootstrap})
can be written as
\begin{align*}
u_{\delta}\left(  t\right)   &  =e^{tJL}u_{\delta}\left(  0\right)  -\int%
_{0}^{t}e^{\left(  t-s\right)  JL}\partial_{x}\left(  f\left(  u_{\delta
}\left(  s\right)  +u_{c}\right)  -f\left(  u_{c}\right)  -f^{\prime}\left(
u_{c}\right)  u_{\delta}\left(  s\right)  \right)  ds\\
&  =u_{l}\left(  t\right)  +u_{n}\left(  t\right)  .
\end{align*}
By (\ref{bootstrap-energy}), Lemma \ref{lemma-semigroup-H-1} and the
assumption (\ref{assumption-f-bootstrap}), when $0\leq t\leq T_{1}$ we have
\begin{align*}
\left\Vert u_{n}\left(  t\right)  \right\Vert _{H^{-1}}  &  \lesssim\int%
_{0}^{t}\left\Vert e^{\left(  t-s\right)  JL}\right\Vert _{H^{-1}}\left\Vert
f\left(  u_{\delta}\left(  s\right)  +u_{c}\right)  -f\left(  u_{c}\right)
-f^{\prime}\left(  u_{c}\right)  u_{\delta}\left(  s\right)  \right\Vert
_{L^{2}}ds\\
&  \lesssim\int_{0}^{t}C(\varepsilon)e^{\left(  \lambda_{0}+\varepsilon
\right)  \left(  t-s\right)  }\left\Vert u_{\delta}\left(  s\right)
\right\Vert _{H^{\frac{m}{2}}}^{p_{1}}ds\\
&  \leq\int_{0}^{t}C(\varepsilon)e^{\left(  \lambda_{0}+\varepsilon\right)
\left(  t-s\right)  }\left(  \frac{C_{2}\delta e^{\lambda_{0}s}}%
{(1+s)^{\frac{1}{l}}}\right)  ^{p_{1}}ds\\
&  \lesssim\left(  \frac{C_{2}\delta e^{\lambda_{0}t}}{(1+t)^{\frac{1}{l}}%
}\right)  ^{p_{1}},
\end{align*}
by choosing $\varepsilon<\left(  p_{1}-1\right)  \lambda_{0}$ and using Lemma
\ref{lemma-semigroup-inhomo-line}.

Step 3 (Interpolation and closing of the estimates).

For $0\leq t\leq T_{1},$ by interpolation we have
\begin{align}
\left\Vert u_{n}\left(  t\right)  \right\Vert _{L^{2}} &  \leq\left\Vert
u_{n}\left(  t\right)  \right\Vert _{H^{-1}}^{\alpha_{1}}\left\Vert
u_{n}\left(  t\right)  \right\Vert _{H^{\frac{m}{2}}}^{1-\alpha_{1}%
}\ \ \ \left(  \alpha_{1}=\frac{m}{m+2}\right)  \label{estimate-higher-L-2}\\
&  \lesssim\left(  \frac{\delta e^{\lambda_{0}t}}{(1+t)^{\frac{1}{l}}}\right)
^{\alpha p_{1}+1-\alpha_{1}},\nonumber
\end{align}
where we use
\[
\left\Vert u_{n}\left(  t\right)  \right\Vert _{H^{\frac{m}{2}}}\leq\left\Vert
u_{\delta}\left(  t\right)  \right\Vert _{H^{\frac{m}{2}}}-\left\Vert
u_{l}\left(  t\right)  \right\Vert _{H^{\frac{m}{2}}}\lesssim\frac{\delta
e^{\lambda_{0}t}}{(1+t)^{\frac{1}{l}}}.
\]
Noticing that $p_{3}=\alpha p_{1}+1-\alpha_{1}>1$, so when $0\leq t\leq
T_{1}\leq T_{\delta}\ $we have
\begin{align*}
\left\Vert u_{\delta}\left(  t\right)  \right\Vert _{L^{2}} &  \leq\left\Vert
u_{l}\left(  t\right)  \right\Vert _{L^{2}}+\left\Vert u_{n}\left(  t\right)
\right\Vert _{L^{2}}\\
&  \leq C_{1}\frac{\delta e^{\lambda_{0}t}}{(1+t)^{\frac{1}{l}}}+C^{\prime
}\left(  \frac{\delta e^{\lambda_{0}t}}{(1+t)^{\frac{1}{l}}}\right)  ^{p_{3}%
}\\
&  \leq\left(  C_{1}+C^{\prime}\theta^{p_{3}-1}\right)  \frac{\delta
e^{\lambda_{0}t}}{(1+t)^{\frac{1}{l}}}<2C_{1}\frac{\delta e^{\lambda_{0}t}%
}{(1+t)^{\frac{1}{l}}},
\end{align*}
by choosing $\theta$ to be small enough. This is in contradiction to the
definition of $T_{1}$. Thus we must have $T_{1}>T$. At $t=T_{\delta}$, by
using (\ref{estimate-higher-L-2}) we have
\begin{align*}
\left\Vert u_{\delta}\left(  T_{\delta}\right)  \right\Vert _{L^{2}} &
\geq\left\Vert u_{l}\left(  T_{\delta}\right)  \right\Vert _{L^{2}}-\left\Vert
u_{n}\left(  T_{\delta}\right)  \right\Vert _{L^{2}}\\
&  \geq C_{0}\frac{\delta e^{\lambda_{0}T_{\delta}}}{(1+T_{\delta})^{\frac
{1}{l}}}-C^{\prime}\left(  \frac{\delta e^{\lambda_{0}T_{\delta}}%
}{(1+T_{\delta})^{\frac{1}{l}}}\right)  ^{p_{3}}\\
&  =C_{0}\theta-C^{\prime}\theta^{p_{3}}\geq\frac{1}{2}C_{0}\theta,
\end{align*}
when $\theta$ is chosen to be small. This finishes the proof of nonlinear
instability for localized perturbations.
\end{proof}

\begin{remark}
The assumption (\ref{assumption-symbol-bootstrap}) could be weakened to
$0<m<1$ depending on the nonlinearity. In the proof, we only need the
embedding of $L^{p}$ into the energy space $H^{\frac{m}{2}}$, where $p>1$ is
the highest power of the nonlinear term $f\left(  u\right)  $ and its
anti-derivative $F\left(  u\right)  $.
\end{remark}

\section{Semilinear equations}

\label{sectionbbm}

In this section, we consider the nonlinear modulational instability of the
generalized BBM equation
\begin{equation}
(1-\partial_{xx})\partial_{t}u+\partial_{x}(u+f(u))=0. \label{bbm}%
\end{equation}

The BBM equation can be viewed as an ordinary differential equation in
$H^{1}$
\[
\partial_{t}u+(1-\partial_{xx})^{-1}\partial_{x}(u+f(u))=0.
\]
Assume that (\ref{bbm}) admits a $T$-periodic traveling solution
$u_{c}(t,x)=u_{c}(x-ct)$. Writing (\ref{bbm}) in the traveling frame
$u(t,x)=U(t,x-ct)$, we arrive at
\begin{equation}
\partial_{t}U-c\partial_{x}U+(1-\partial_{xx})^{-1}\partial_{x}(U+f(U))=0.
\label{bbmtr}%
\end{equation}
Linearizing (\ref{bbmtr}) at $u_{c}$, we obtain the linearized equation in the
Hamiltonian form
\begin{equation}
\partial_{t}U=JLU, \label{bbmln}%
\end{equation}
where
\begin{equation}
J=(1-\partial_{xx})^{-1}\partial_{x},\ L=c\left(  1-\partial_{xx}\right)
-\left(  1+f^{\prime}\left(  u_{c}\right)  \right)  . \label{defn-J-L-bbm}%
\end{equation}
Assume $T=2\pi$. For any $k\in\left[  0,1\right]  $, define
\[
J_{k}=(1-\left(  \partial_{x}+ik\right)  ^{2})^{-1}\left(  \partial
_{x}+ik\right)  ,\ \ L_{k}=c\left(  1-\left(  \partial_{x}+ik\right)
^{2}\right)  -\left(  1+f^{\prime}\left(  u_{c}\right)  \right)  .
\]
As for the KDV type equations, the linear modulational instability of $u_{c}$
means that $J_{k}L_{k}$ has an unstable eigenvalue for some $k\in\left[
0,1\right]  $. Denote $\lambda_{0}$ to be the maximal growth rate of
$e^{tJ_{k}L_{k}},\ k\in\left[  0,1\right]  $. By the same proof of Lemmas
\ref{lemma-trichotomy} and \ref{lemma-semigroup-local-KDV}, we have the
semigroup estimates for (\ref{bbmln}).

\begin{lemma}
\label{lemma-semigroup-bbm}Suppose $u_{c}$ is modulationally unstable.
Consider the semigroup $e^{tJL}$ associated with the solutions of
(\ref{bbmln}), where $J,L$ are given in (\ref{defn-J-L-bbm}). Then

i) the exponential trichotomy in the sense of (\ref{estimate-stable.unstable})
and (\ref{estimate-center}) holds true in the spaces $H^{s}\left(
{\mathbb{T}_{2\pi q}}\right)  $ $\left(  s\geq1,q\in\mathbb{N}\right)  $.

ii) for every $s\geq1$, $\varepsilon>0\ $there exist $C(s,\varepsilon)>0$ such
that
\[
\Vert e^{tJL}u(x)\Vert_{H^{s}{(\mathbf{R})}}\leqslant C(s,\varepsilon
)e^{\left(  \lambda_{0}+\varepsilon\right)  t}\Vert u(x)\Vert_{H^{s}%
{(\mathbf{R})}},\text{ }\forall t>0,
\]
for any $u\in H^{s}{(\mathbf{R})}$.
\end{lemma}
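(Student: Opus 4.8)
The plan is to reproduce, for the pair $J,L$ in (\ref{defn-J-L-bbm}), the two-part scheme used for the KDV-type equations: for (i) we verify the structural hypotheses \textbf{(H1)}--\textbf{(H3)} of Theorem \ref{theorem-dichotomy} and read off the trichotomy, and for (ii) we pass to the Floquet-Bloch fibers and prove a uniform-in-$\xi$ semigroup bound exactly as in Lemmas \ref{lemma-semigroup-local-KDV} and \ref{lemma-semigroup-whitham-localized}. A non-constant continuous periodic traveling wave of (\ref{bbm}) has $c\neq 0$ (if $c=0$ then $\partial_x(u_c+f(u_c))=0$, forcing $u_c$ constant), so we may assume $c>0$; the case $c<0$ is handled by applying the theory to $JL=(-J)(-L)$, as in Lemma \ref{lemma-trichotomy}.

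For (i), it is cleanest to first conjugate by $S=(1-\partial_{xx})^{1/2}$. A direct computation (all constant-coefficient operators commute) gives $S\,JL\,S^{-1}=\partial_x\big(c-(1-\partial_{xx})^{-1/2}(1+f'(u_c))(1-\partial_{xx})^{-1/2}\big)=\widetilde{J}\widetilde{L}$, with $\widetilde{J}=\partial_x$ skew-adjoint on $L^2(\mathbb{T}_{2\pi q})$ and $\widetilde{L}=c-K$, where $K=(1-\partial_{xx})^{-1/2}(1+f'(u_c))(1-\partial_{xx})^{-1/2}$ is a self-adjoint compact operator on $L^2(\mathbb{T}_{2\pi q})$. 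Thus \textbf{(H1)} holds, the form $\langle\widetilde{L}\cdot,\cdot\rangle$ is bounded on $X=L^2(\mathbb{T}_{2\pi q})$, and since $c>0$ the spectrum of $\widetilde{L}=c-K$ accumulates only at $c$, so $\widetilde{L}$ has only finitely many nonpositive eigenvalues with a spectral gap, giving the decomposition required in \textbf{(H2)}; \textbf{(H3)} is automatic because $\dim\ker\widetilde{L}<\infty$. Theorem \ref{theorem-dichotomy} then yields the exponential trichotomy of $e^{t\widetilde{J}\widetilde{L}}$ in $L^2(\mathbb{T}_{2\pi q})$, hence of $e^{tJL}=S^{-1}e^{t\widetilde{J}\widetilde{L}}S$ in $H^1(\mathbb{T}_{2\pi q})$. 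To extend the estimates (\ref{estimate-stable.unstable})--(\ref{estimate-center}) to $H^s$ for every $s\geq1$, one shows as in Lemma \ref{lem-eigenfunction-kdv} that the finitely many stable and unstable eigenfunctions are smooth: since $JL=c\partial_x-(1-\partial_{xx})^{-1}\partial_x(1+f'(u_c))$ and the second term is smoothing of order $-1$, the relation $\lambda v=c\partial_x v-(1-\partial_{xx})^{-1}\partial_x(1+f'(u_c))v$ shows that $v\in H^\sigma$ implies $\partial_x v\in H^\sigma$, hence $v\in H^{\sigma+1}$; bootstrapping from $v\in L^2$ gives $v\in\bigcap_\sigma H^\sigma$, so the trichotomy holds in every $H^s(\mathbb{T}_{2\pi q})$, $s\geq1$.

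For (ii), write $u\in H^s(\mathbf{R})$ as $u(x)=\int_0^1 e^{i\xi x}u_\xi(x)\,d\xi$ with $u_\xi\in H^s(\mathbb{T}_{2\pi})$, so that by Lemma \ref{ptor} the $H^s(\mathbf{R})$-norm of $u$ is comparable to $\big(\int_0^1\|u_\xi\|_{H^s(\mathbb{T}_{2\pi})}^2\,d\xi\big)^{1/2}$ and $e^{tJL}u(x)=\int_0^1 e^{i\xi x}e^{tJ_\xi L_\xi}u_\xi(x)\,d\xi$; it thus suffices to prove the uniform bound $\|e^{tJ_\xi L_\xi}\|_{H^s(\mathbb{T}_{2\pi})}\leq C(s,\varepsilon)e^{(\lambda_0+\varepsilon)t}$ for all $\xi\in[0,1]$. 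For $s=1$ this is obtained by repeating the proof of Lemma \ref{lemma-semigroup-whitham-localized} (after conjugating each fiber by $(1-(\partial_x+i\xi)^2)^{1/2}$, as in part (i), so that the fiberwise $\widetilde{L}_\xi$ is bounded on $L^2(\mathbb{T}_{2\pi})$); the only substantive point is the continuity in $\xi$ of the resolvent $(\lambda-J_\xi L_\xi)^{-1}$ near each $\xi_0\in[0,1]$, after which the Riesz-projection splitting of $J_{\xi_0}L_{\xi_0}$ into its finitely many generalized eigenspaces on which $L_{\xi_0}$ is not positive definite (a finite set, by the instability index formula of \cite{lin-zeng-Hamiltonian}) and the uniformly controlled complement produces the bound, with $\lambda_0$ the maximal growth rate over $\xi\in[0,1]$. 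The resolvent continuity holds because $J_\xi=(1-(\partial_x+i\xi)^2)^{-1}(\partial_x+i\xi)$ and $L_\xi$ are analytic in $\xi$ and $J_\xi L_\xi=c(\partial_x+i\xi)-K_\xi$ with $K_\xi$ smoothing of order $-1$, so $J_\xi L_\xi$ is a relatively compact perturbation of the skew-adjoint operator $c(\partial_x+i\xi)$, and the Fredholm/compactness argument of Lemma \ref{lemma-semigroup-whitham-localized} applies. The case $s>1$ then follows by applying $J_\xi L_\xi$ repeatedly and interpolating.

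The step I expect to be the main obstacle is the uniform-in-$\xi$ semigroup bound in (ii): one must control the generalized eigenspaces of $J_\xi L_\xi$ as eigenvalues collide or split with $\xi$, and cope with the band (continuous) spectrum of $L$ on $\mathbf{R}$. Both are dissolved by the Bloch-fiber reduction together with the resolvent-continuity argument above, and the structural feature that makes it work is precisely that $J$ is a smoothing operator, so that on each fiber $J_\xi L_\xi$ is a compact perturbation of a skew-adjoint operator with bounded semigroup; this is why the proof transcribes, with only notational changes, from the Whitham localized case in Lemma \ref{lemma-semigroup-whitham-localized}.
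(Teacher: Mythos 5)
Your proposal is correct and follows essentially the same route as the paper, whose own proof is just the remark that Lemma \ref{lemma-trichotomy} (verification of \textbf{(H1)}--\textbf{(H3)} and Theorem \ref{theorem-dichotomy} for the periodic trichotomy) and Lemma \ref{lemma-semigroup-local-KDV} (Bloch-fiber decomposition with uniform-in-$\xi$ estimates via spectral projections and resolvent continuity, as carried out in Lemma \ref{lemma-semigroup-whitham-localized}) apply verbatim to the BBM pair $J,L$. The only difference is cosmetic: you conjugate by $(1-\partial_{xx})^{1/2}$ so that $\tilde{L}=c-K$ is bounded on $L^{2}$, whereas the paper would apply Theorem \ref{theorem-dichotomy} directly with $X=H^{1}({\mathbb{T}_{2\pi q}})$ as the energy space; the two verifications are equivalent.
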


For (\ref{bbmtr}), there is no loss of derivative in the nonlinear term.
Therefore, we can use the semigroup estimates in Lemma
\ref{lemma-semigroup-bbm} to prove nonlinear modulational instability directly
by ODE arguments. We consider localized perturbations below.

\begin{theorem}
\label{thmbbm} Assume $f\in C^{1}\left(  \mathbf{R}\right)  $ and there exists
$p_{1}>1$, such that%
\begin{equation}
\left\vert f\left(  u+v\right)  -f\left(  v\right)  -f^{\prime}\left(
v\right)  u\right\vert \lesssim C\left(  \left\vert u\right\vert _{\infty
},\left\vert v\right\vert _{\infty}\right)  \left\vert u\right\vert ^{p_{1}}.
\label{growth-f-bbm}%
\end{equation}
Let $u_{c}\left(  x-ct\right)  $ be a traveling wave solution of (\ref{bbm})
which is assumed to be linearly modulationally unstable. Then $u_{c}$ is
nonlinearly unstable under localized perturbations in the following sense:
there exists $\theta_{0}>0$, such that for any $s\in\mathbb{N}$ and
arbitrarily small $\delta>0$, there exists a time $T^{\delta}=O\left(
\left\vert \ln\delta\right\vert \right)  $ and a solution $U_{\delta}(t,x)$ to
(\ref{bbmtr}) satisfying $\Vert U_{\delta}(0,x)-u_{c}(x)\Vert_{H^{s}%
(\mathbf{R})}<\delta\ $and$\ \Vert U_{\delta}(T^{\delta},x)-u_{c}%
(x)\Vert_{L^{2}(\mathbf{R})}\geqslant\theta_{0}.$
\end{theorem}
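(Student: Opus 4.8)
The plan is to run the wave-packet construction of Section \ref{localized} together with the semigroup bound of Lemma \ref{lemma-semigroup-bbm} ii), but to exploit that (\ref{bbmtr}) has no derivative loss, so the whole argument closes by a single Duhamel bootstrap in $H^{1}(\mathbf{R})$ without building higher order approximate solutions. First I fix the most unstable frequency $k_{0}$ and, exactly as in the discussion preceding Lemma \ref{lemma-Puisex} (and Lemma \ref{lemma-Puisex} itself), choose a small interval $I\subset[0,\tfrac{1}{2}]$ with right endpoint $k_{0}$, a branch $\lambda(k)$ of unstable eigenvalues of $J_{k}L_{k}$ with $\lambda(k_{0})$ maximal, and corresponding eigenfunctions $v_{1}(k,x)$, continuous in $k$, so that (\ref{taylor-real-lambda}) holds for an even integer $l$; as in Lemmas \ref{lem-eigenfunction-kdv}--\ref{lem-eigenfunction-whitham} one checks $v_{1}(k,\cdot)\in H^{s}_{x}(\mathbb{T})$ in the range of $s$ allowed by the regularity of $f$, and for the $L^{2}$ conclusion below $s=1$ already suffices. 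Then, as in (\ref{initial data-line}) and (\ref{U1loc}), set
\[
u_{1}(x)=2\operatorname{Re}\int_{I}v_{1}(k,x)e^{ikx}\,dk,\qquad U_{1}(t,x)=2\operatorname{Re}\int_{I}v_{1}(k,x)e^{\lambda(k)t}e^{ikx}\,dk,
\]
so that $U_{1}$ solves (\ref{bbmln}) with $U_{1}(0)=u_{1}$, and by the computation of Lemma \ref{Lemma-U1-estimates} (via Lemma \ref{ptor} and (\ref{taylor-real-lambda})) there exist $c_{1}>c_{2}>0$ with
\[
\frac{c_{2}\,e^{\lambda_{0}t}}{(1+t)^{1/l}}\le\|U_{1}(t,\cdot)\|_{L^{2}(\mathbf{R})}\le\|U_{1}(t,\cdot)\|_{H^{1}(\mathbf{R})}\le\frac{c_{1}\,e^{\lambda_{0}t}}{(1+t)^{1/l}},\qquad t\ge0,
\]
where $\lambda_{0}=\operatorname{Re}\lambda(k_{0})$.

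Writing $U=u_{c}+u$ and using that $u_{c}$ is a steady state of (\ref{bbmtr}), the perturbation satisfies
\[
\partial_{t}u=JLu-J\,N(u),\qquad N(u)=f(u_{c}+u)-f(u_{c})-f'(u_{c})u,
\]
with $J,L$ as in (\ref{defn-J-L-bbm}). The structural fact replacing the energy/interpolation machinery of Section \ref{section-bootstrap} is that $J=(1-\partial_{xx})^{-1}\partial_{x}$ is smoothing: its symbol $i\xi/(1+\xi^{2})$ is bounded and gains a derivative, so $\|Jg\|_{H^{1}(\mathbf{R})}\le\|g\|_{L^{2}(\mathbf{R})}$. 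Since (\ref{bbmtr}) is an ODE in $H^{1}(\mathbf{R})$ it is locally well posed there; with data $u_{\delta}(0)=\delta u_{1}$, Duhamel gives $u_{\delta}(t)=e^{tJL}(\delta u_{1})-\int_{0}^{t}e^{(t-s)JL}J\,N(u_{\delta}(s))\,ds=:u_{l}(t)+u_{n}(t)$. I then let $T_{1}$ be the maximal time on which $\|u_{\delta}(t)\|_{H^{1}}\le 2c_{1}\delta e^{\lambda_{0}t}/(1+t)^{1/l}$, and define $T^{\delta}$ by $\delta e^{\lambda_{0}T^{\delta}}/(1+T^{\delta})^{1/l}=\theta$, so $T^{\delta}=O(|\ln\delta|)$; the goal is to show $T_{1}>T^{\delta}$ for $\theta$ small.

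On $[0,\min(T_{1},T^{\delta})]$, (\ref{growth-f-bbm}) and the one-dimensional embeddings $H^{1}(\mathbf{R})\hookrightarrow L^{\infty}(\mathbf{R})\cap L^{2p_{1}}(\mathbf{R})$ give $\|N(u_{\delta}(s))\|_{L^{2}}\lesssim C(\|u_{\delta}(s)\|_{H^{1}})\|u_{\delta}(s)\|_{H^{1}}^{p_{1}}\lesssim(\delta e^{\lambda_{0}s}/(1+s)^{1/l})^{p_{1}}$; combined with $\|Jg\|_{H^{1}}\le\|g\|_{L^{2}}$, the bound $\|e^{tJL}\|_{H^{1}}\le C(\varepsilon)e^{(\lambda_{0}+\varepsilon)t}$ of Lemma \ref{lemma-semigroup-bbm} ii) with $\varepsilon<(p_{1}-1)\lambda_{0}$, and an integral estimate of the type in Lemma \ref{lemma-semigroup-inhomo-line}, one obtains $\|u_{n}(t)\|_{H^{1}}\lesssim(\delta e^{\lambda_{0}t}/(1+t)^{1/l})^{p_{1}}\le\theta^{p_{1}-1}\,\delta e^{\lambda_{0}t}/(1+t)^{1/l}$. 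Since $\|u_{l}(t)\|_{H^{1}}=\delta\|U_{1}(t)\|_{H^{1}}\le c_{1}\delta e^{\lambda_{0}t}/(1+t)^{1/l}$, the triangle inequality improves the bootstrap bound to $\|u_{\delta}(t)\|_{H^{1}}\le(c_{1}+C'\theta^{p_{1}-1})\delta e^{\lambda_{0}t}/(1+t)^{1/l}<2c_{1}\delta e^{\lambda_{0}t}/(1+t)^{1/l}$ for $\theta$ small, so $T_{1}>T^{\delta}$. Finally, evaluating at $t=T^{\delta}$,
\[
\|U_{\delta}(T^{\delta})-u_{c}\|_{L^{2}(\mathbf{R})}=\|u_{\delta}(T^{\delta})\|_{L^{2}}\ge\|u_{l}(T^{\delta})\|_{L^{2}}-\|u_{n}(T^{\delta})\|_{L^{2}}\ge c_{2}\theta-C'\theta^{p_{1}}\ge\tfrac{1}{2} c_{2}\theta=:\theta_{0},
\]
which is the claimed instability.

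Since the semigroup estimates are already in hand (Lemma \ref{lemma-semigroup-bbm}) and there is genuinely no derivative loss, I do not expect a hard analytic step: the argument is essentially the localized proof of Section \ref{localized} with the approximate-solution hierarchy collapsed to one Duhamel iteration. The two points needing the most care are (i) that $J=(1-\partial_{xx})^{-1}\partial_{x}$ uniformly gains one derivative, so that $\|JN(u)\|_{H^{1}}\lesssim\|N(u)\|_{L^{2}}$ and hence the pointwise growth condition (\ref{growth-f-bbm}) alone controls the nonlinearity — this is exactly where the semilinear structure of (\ref{bbmtr}) is used; and (ii) checking that the Duhamel integral closes \emph{with} the polynomial correction $(1+t)^{-1/l}$ present, i.e.\ that $\varepsilon<(p_{1}-1)\lambda_{0}$ yields $\int_{0}^{t}e^{(\lambda_{0}+\varepsilon)(t-s)}(e^{\lambda_{0}s}/(1+s)^{1/l})^{p_{1}}ds\lesssim(e^{\lambda_{0}t}/(1+t)^{1/l})^{p_{1}}$. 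Both are routine.
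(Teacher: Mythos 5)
Your proposal is correct and follows essentially the same route as the paper's proof: wave-packet initial data $\delta u_{1}$ with the growth bounds of Lemma \ref{Lemma-U1-estimates}, a Duhamel decomposition $u_{\delta}=u_{l}+u_{n}$ estimated via Lemma \ref{lemma-semigroup-bbm} and the smoothing of $(1-\partial_{xx})^{-1}\partial_{x}$, a bootstrap on $\|u_{\delta}(t)\|_{H^{1}}$ up to $T^{\delta}$, and the final lower bound $c_{2}\theta-C'\theta^{p_{1}}$ at $t=T^{\delta}$. The only difference is cosmetic (your exponent $p_{1}$ in the last step, where the paper writes $p_{3}$), so nothing further is needed.
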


\begin{proof}
For any $\delta>0$, choose the initial perturbation $u_{\delta}(0)=\delta
u_{1}$, where $u_{1}$ is defined as in (\ref{initial data-line}). Then by the
proof of Lemma \ref{Lemma-U1-estimates},
\[
\frac{C_{0}\delta e^{\lambda_{0}t}}{(1+t)^{\frac{1}{l}}}\leq\Vert
e^{tJL}u_{\delta}\left(  0\right)  \Vert_{H^{k}(\mathbf{R})}\leq\frac
{C_{1}\delta e^{\lambda_{0}t}}{(1+t)^{\frac{1}{l}}},\text{ }k=0,1
\]
for some $C_{0},C_{1}>0,l\in\mathbf{N\ }$and $\lambda_{0}$ is the maximal
growth rate defined before. Let $U_{\delta}(t,x)$ be the solution to
(\ref{bbmtr}) with initial value $u_{c}+\delta u_{\delta}(0)$ and $u_{\delta
}=U_{\delta}-u_{c}$, then $u_{\delta}$ satisfies
\begin{equation}
\partial_{t}u_{\delta}=JLu_{\delta}+g(u_{\delta}),\ \ u_{\delta}(0)=\delta
u_{1},\label{bbmhs}%
\end{equation}
where
\[
g(v)=-(1-\partial_{xx})^{-1}\partial_{x}(f(u_{c}+v)-f(u_{c})-f^{\prime}%
(u_{c})v).
\]
Define $T_{1}>0$ to be the maximal time such that
\[
\left\Vert u_{\delta}\left(  t\right)  \right\Vert _{H^{1}}\leq\frac
{2C_{1}\delta e^{\lambda_{0}t}}{(1+t)^{\frac{1}{l}}},\ 0\leq t\leq T_{1}.
\]
Define $T_{\delta}$ by
\[
\frac{\delta e^{\lambda_{0}T_{\delta}}}{(1+T_{\delta})^{\frac{1}{l}}}=\theta,
\]
where $\theta>0$ is to be determined. We will show $T_{1}>T_{\delta}$. Suppose
otherwise, $T_{1}\leq T_{\delta}$. From (\ref{bbmhs}), we have
\begin{align*}
u_{\delta}(t,x) &  =e^{tJL}u_{\delta}(0)+\int_{0}^{t}e^{JL(t-s)}g(u_{\delta
}\left(  s\right)  )\,ds\\
&  =u_{l}+u_{n}.
\end{align*}
Then when $0\leq t\leq T_{1}\leq T_{\delta}$, by using assumption
(\ref{growth-f-bbm}) we have
\begin{align*}
\left\Vert u_{n}\left(  t\right)  \right\Vert _{H^{1}} &  \lesssim\int_{0}%
^{t}\left\Vert e^{\left(  t-s\right)  JL}\right\Vert _{H^{1}}\left\Vert
f\left(  u_{\delta}\left(  s\right)  +u_{c}\right)  -f\left(  u_{c}\right)
-f^{\prime}\left(  u_{c}\right)  u_{\delta}\left(  s\right)  \right\Vert
_{L^{2}}ds\\
&  \lesssim\int_{0}^{t}C(\varepsilon)e^{\left(  \lambda_{0}+\varepsilon
\right)  \left(  t-s\right)  }\left\Vert u_{\delta}\left(  s\right)
\right\Vert _{H^{1}}^{p_{1}}ds\\
&  \lesssim\int_{0}^{t}C(\varepsilon)e^{\left(  \lambda_{0}+\varepsilon
\right)  \left(  t-s\right)  }\left(  \frac{2C_{1}\delta e^{\lambda_{0}s}%
}{(1+s)^{\frac{1}{l}}}\right)  ^{p_{1}}ds\\
&  \lesssim\left(  \frac{\delta e^{\lambda_{0}t}}{(1+t)^{\frac{1}{l}}}\right)
^{p_{1}},
\end{align*}
by choosing $\varepsilon>0$ small. By the same arguments as in the proof of
Theorem \ref{thm-bootstrap-instability}, this leads to a contradiction with
the definition of $T_{1}$. Therefore, $T_{1}\geq T_{\delta}$ and
\begin{align*}
\left\Vert u_{\delta}\left(  T_{\delta}\right)  \right\Vert _{L^{2}} &
\geq\left\Vert u_{l}\left(  T_{\delta}\right)  \right\Vert _{L^{2}}-\left\Vert
u_{n}\left(  T_{\delta}\right)  \right\Vert _{H^{1}}\\
&  \geq C_{0}\frac{\delta e^{\lambda_{0}T_{\delta}}}{(1+T_{\delta})^{\frac
{1}{l}}}-C^{\prime}\left(  \frac{\delta e^{\lambda_{0}T_{\delta}}%
}{(1+T_{\delta})^{\frac{1}{l}}}\right)  ^{p_{3}}\\
&  =C_{0}\theta-C^{\prime}\theta^{p_{3}}\geq\frac{1}{2}C_{0}\theta,
\end{align*}
when $\theta$ is chosen to be small. This finishes the proof of the Theorem.
\end{proof}

\begin{remark}
For multi-periodic perturbations, following the same arguments, we can prove
the nonlinear modulational orbital instability of the generalized BBM
equation. Moreover, since the generalized BBM equation is an infinite
dimensional ODE in $H^{1}$, one can even construct invariant (stable, unstable
and center) manifolds by the standard theory.
\end{remark}

\section{Applications}

\label{application} In this section, we apply our results to some concrete examples.

\subsection{Whitham equation}

\label{subsection-whitham}

Consider the Whitham equation for surface water waves,
\begin{equation}
\label{wh}\partial_{t} u+ \mathcal{M}\partial_{x} u+ \partial_{x}(u^{2})=0,
\end{equation}
where $\mathcal{M}$ is the Fourier multiplier given by
\[
\widehat{\mathcal{M}f}(\xi)=\sqrt{\frac{\tanh\xi}{\xi}}\widehat{f}(\xi).
\]
It is clear that $\|\mathcal{M}(\cdot)\|_{H^{1/2}}\sim\|\cdot\|_{L^{2}}$. It
is clear that $m(\xi)=\sqrt{\frac{\tanh\xi}{\xi}}$ is real-valued, analytic
and even.

The existence of a periodic traveling wave solutions was shown in
\cite{hur-johnson-whitham}.

\begin{lemma}
[\cite{hur-johnson-whitham}]For each $\kappa>0$ and each $b$ with $|b|$
sufficient small, there exists a family of periodic traveling wave solutions
to (\ref{wh}) taking the form
\[
u_{c}(a,b,\kappa)(x,t)=w(a,b)(\kappa(x-c(\kappa,a,b)t))=:w(\kappa,a,b)(z),
\]
for $a$ with $|a|$ sufficiently small , where $w$ and $c$ depend analytically
upon $\kappa$, $a$, and $b$. Moreover, $w$ is smooth, even, and $2\pi
$-periodic in $z$, and $c$ is even in $a$. Furthermore,
\begin{align*}
&  \;\;\;\;w(\kappa,a,b)(z)\newline\\
&  =w_{0}(\kappa,b)+a\cos z+\frac{1}{2}a^{2}(\frac{1}{m(\kappa)-1}+\frac
{\cos(2z)}{m(\kappa)-m(2\kappa)})+O(a(a^{2}+b^{2}))
\end{align*}
and
\[
c(\kappa,a,b)=c_{0}(\kappa,b)+a^{2}(\frac{1}{m(\kappa)-1}+\frac{1}{2}\frac
{1}{m(\kappa)-m(2\kappa)})+O(a(a^{2}+b^{2}))
\]
as $|a|$, $|b|\rightarrow0$, where
\[
c_{0}(\kappa,b):=m(\kappa)+2b(1-m(\kappa))-6b^{2}(1-m(\kappa))+O(b^{3})
\]
and
\[
w_{0}(\kappa,b):=b(1-m(\kappa))-b^{2}(1-m(\kappa))+O(b^{3}).
\]

\end{lemma}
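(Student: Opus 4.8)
The plan is to reduce the periodic traveling-wave problem for (\ref{wh}) to a local bifurcation problem on a space of even $2\pi$-periodic functions and solve it by a Lyapunov--Schmidt reduction together with the analytic implicit function theorem. Writing $u(t,x)=w(z)$ with $z=\kappa(x-ct)$ and using that $\mathcal{M}$ has the even, real-analytic Fourier symbol $m(\xi)=\sqrt{\tanh\xi/\xi}$, the operator $\mathcal{M}$ acting in $x$ becomes, in the variable $z$, the Fourier multiplier $\mathcal{M}_\kappa$ on $\mathbb{T}_{2\pi}$ with symbol $m(n\kappa)$; the profile equation $-c\kappa w'+\kappa\mathcal{M}_\kappa w'+\kappa(w^2)'=0$ integrates once to
\begin{equation}
\mathcal{M}_\kappa w-cw+w^2=B \label{eqn-profile-whitham}
\end{equation}
for an integration constant $B$. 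I would view (\ref{eqn-profile-whitham}) as an equation for $(w,c,B)$ with $w\in H^s_{\mathrm{even}}(\mathbb{T}_{2\pi})$, $s\ge1$; since $\mathcal{M}_\kappa$ is bounded from $L^2$ into $H^{1/2}$ (indeed smoothing) and $m$ is real analytic and even, the left-hand side is analytic and joint in $(w,c,\kappa)$.

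Next I would identify the bifurcation point. A constant $w\equiv w_0$ solves (\ref{eqn-profile-whitham}) for every $c$, with $B=(1-c)w_0+w_0^2$ (using $m(0)=1$); linearizing $w\mapsto\mathcal{M}_\kappa w-cw+w^2$ at $w_0$ gives $v\mapsto(\mathcal{M}_\kappa-c+2w_0)v$, acting on the $n$-th Fourier mode by multiplication by $m(n\kappa)-c+2w_0$. I would treat the mean $w_0=\langle w\rangle$ as a parameter (to be reparametrized by $b$, with normalization $w_0(\kappa,b)=b(1-m(\kappa))+O(b^2)$) and write $w=w_0+\tilde w$ with $\langle\tilde w\rangle=0$, so that $\tilde w\equiv0$ is a branch of (trivial) solutions for arbitrary $c$. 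Because $m$ is strictly decreasing on $(0,\infty)$ and even, the numbers $\{m(n\kappa)\}_{n\ge0}$ are mutually distinct when $\kappa>0$; hence at $c=m(\kappa)+2w_0$ the linearization has a one-dimensional kernel spanned by $\cos z$ within the even functions (restricting to even $\tilde w$ removes the $\sin z$ direction and the translation degeneracy), its remaining spectrum is bounded away from $0$, and the simple eigenvalue $m(\kappa)-c+2w_0$ crosses $0$ transversally as $c$ varies --- precisely the Crandall--Rabinowitz configuration, with the amplitude $a$ (the $\cos z$-coefficient of $\tilde w$) as bifurcation parameter and $\kappa,b$ as external parameters.

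I would then carry out the Lyapunov--Schmidt reduction: split the even, mean-zero functions as $\mathrm{span}\{\cos z\}\oplus Y$ with $Y=\mathrm{span}\{\cos2z,\cos3z,\dots\}$, solve the $Y$-projection of (\ref{eqn-profile-whitham}) for the part of $w$ in $Y$ as an analytic function of $(a,\kappa,b,c)$ and the $\cos z$-projection (the bifurcation equation) for $c=c(\kappa,a,b)$, both by the analytic implicit function theorem --- legitimate because on $Y$ the linearization has $n$-th eigenvalue $m(n\kappa)-c+2w_0$, which for $c$ near $m(\kappa)+2w_0$ and $|n|\ge2$ stays uniformly bounded away from $0$ by the strict monotonicity of $m$ and the smoothing of $\mathcal{M}_\kappa$ --- while the mean-zero projection simply recovers $B$. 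This produces a branch $w=w(\kappa,a,b)$ analytic in all three parameters, even, smooth (the regularity of these small-amplitude waves following as in \cite{hur-johnson-whitham,solitary-whitham}, using that $c-2\|w\|_{\infty}\approx m(\kappa)>0$ for $|a|,|b|$ small), and $2\pi$-periodic; evenness of $c$ in $a$ follows from the symmetry $(a,z)\mapsto(-a,z+\pi)$ of (\ref{eqn-profile-whitham}). Finally the leading-order expansions are obtained by inserting the ansatz $w=w_0+a\cos z+\tfrac12a^2w_2(z)+O(a^3)$ into (\ref{eqn-profile-whitham}) and matching Fourier modes: order $a$ gives $c=m(\kappa)+2w_0+O(a)$, and at order $a^2$ the mode-$0$ and mode-$2$ components of $w_2$ are obtained by inverting $\mathcal{M}_\kappa-c$ there, producing the denominators $m(\kappa)-1$ and $m(\kappa)-m(2\kappa)$, with the mode-$1$ component normalized to zero.

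The step I expect to be the main obstacle is checking the nondegeneracy hypotheses of the bifurcation theorem --- that the kernel of $\mathcal{M}_\kappa-c+2w_0$ at the bifurcation point is exactly one-dimensional and algebraically simple and that the eigenvalue crossing is transversal --- which reduces to the non-resonance inequalities $m(n\kappa)\ne m(\kappa)$ for all $n\ne\pm1$, in particular $m(\kappa)\ne1$ and $m(\kappa)\ne m(2\kappa)$; these follow from the strict monotonicity of $m(\xi)=\sqrt{\tanh\xi/\xi}$ on $(0,\infty)$, which itself has to be verified (equivalently, $\sinh(2\xi)>2\xi$ for $\xi>0$). Once these are in place, the analyticity of the branch in $(\kappa,a,b)$ and the stated asymptotic formulas follow routinely from the analytic implicit function theorem and the mode-matching computation.
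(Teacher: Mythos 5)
This lemma is imported verbatim from \cite{hur-johnson-whitham}; the present paper offers no proof of it, so the comparison is with the cited source, and your Lyapunov--Schmidt/Crandall--Rabinowitz construction on even $2\pi$-periodic functions --- bifurcation from the constant states of the once-integrated profile equation, one-dimensional kernel spanned by $\cos z$ thanks to the strict monotonicity of $m(\xi)=\sqrt{\tanh\xi/\xi}$ and $m(0)=1$, analytic implicit function theorem in $(\kappa,a,b,c)$, and mode-matching for the quadratic corrections --- is essentially the route taken there, and it is sound. The one loose end is the parameter $b$: the stated expansions of $w_{0}(\kappa,b)$ and $c_{0}(\kappa,b)$ (with the coefficients $2b(1-m(\kappa))$, $-6b^{2}(1-m(\kappa))$, etc.) are not consequences of the bifurcation argument alone but of the particular normalization of $b$ adopted in \cite{hur-johnson-whitham}, which your sketch leaves unspecified; once that normalization is fixed, those formulas follow from the same expansion you describe.
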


One can check that%

\begin{align*}
&  c-\Vert f^{\prime}(u_{c})\Vert_{L^{\infty}(\mathbb{T})}\newline\\
&  =c-2\Vert u_{c}\Vert_{L^{\infty}(\mathbb{T})}\\
&  =m(\kappa)+2b(1-m(\kappa))-b(1-m(\kappa))-a\cos z+O(a^{2}+b^{2})\\
&  \geqslant\varepsilon_{0}>0,
\end{align*}
when $|a|$, $|b|\ $are sufficiently small. So the assumption
(\ref{assumption-TW-whitham}) is satisfied.

Moreover, the linear modulational instability of $u_{c}(a,b,\kappa)\ $\ is
shown in \cite{hur-johnson-whitham} for $\kappa>0\ $large enough. Therefore,
we can apply Theorem \ref{thm-smooth-f} to obtain nonlinear modulational
instability of $u_{c}(a,b,\kappa)\ $when $|a|$, $|b|\ $are sufficiently small
and $\kappa>0\ $is sufficiently large.

\subsection{The Nonlinear Schr\"odinger equation}

We consider in this section the focusing NLS equation
\begin{equation}
iu_{t}+u_{xx}+|u|^{2}u=0, \label{nls}%
\end{equation}
in which $x\in\mathbf{R}$, $t\in\mathbf{R}^{+}$, and $u(x,t)\in\mathbb{C}$.
Note that like the generalized BBM equation discussed in Section
\ref{sectionbbm}, the NLS equation is also semi-linear, with no loss of
derivative in the nonlinear term. From the results in
\cite{haragus-gallay-schrodinger}, we know that (\ref{nls}) possesses a family
of small periodic waves of the form $u_{a,b}(x,t)=e^{-it}e^{il_{a,b}x}%
P_{a,b}(k_{a,b}x)$, where
\[
l_{a,b}=\frac{1}{4}(a^{2}-b^{2})+O(a^{4}+b^{4}),
\]%
\[
k_{a,b}=1+\frac{3}{4}(a^{2}+b^{2})+O(a^{4}+b^{4}),
\]%
\[
P_{a,b}(y)=ae^{-iy}+be^{iy}+O(|ab|(|a|+|b|)),
\]
as $(a,b)\rightarrow0$.\newline

In \cite{haragus-gallay-schrodinger}, $u_{a,b}(x,t)$ were written in the form
of
\[
u_{a,b}(x,t)=e^{i(p_{a,b}x-t)}Q_{a,b}(2k_{a,b}x),
\]
and solutions of (\ref{nls}) of the form $u(x,t)=e^{i(p_{a,b}x-t)}%
Q(2k_{a,b}x,t)$ were considered, where
\[
p_{a,b}=l_{a,b}+k_{a,b},Q_{a,b}(z)=e^{-iz/2}P_{a,b}(z/2).
\]
Here $Q_{a,b}(z)$ were claimed to be a member of a two-parameter family of
traveling and rotating waves, see Claim 2 in \cite{haragus-gallay-schrodinger}%
. Moreover, $Q_{a,b}(z)$ were regarded as an equilibrium of a corresponding
evolution equation, and the spectrum of a linear operator at $Q_{a,b}(x)$ was
studied to obtain the linear modulational instability of the small periodic
waves $u_{a,b}(x,t)$. Thus, we can use the same arguments as in Section
\ref{sectionbbm} to prove nonlinear modulational instability of the small
periodic waves $u_{a,b}(x,t)$ as a solution of (\ref{nls}).\newline

\subsection{Fractional KDV-type equation}

Consider the KDV-type equation
\begin{equation}
\partial_{t}u+\partial_{x}(\Lambda^{m}u-u^{p})=0, \label{kdv}%
\end{equation}
where the pseudo differential operator $\Lambda=\sqrt{-\partial_{x}^{2}}$ is
defined by its Fourier multiplier as $\widehat{\Lambda u}(\xi)=|\xi|\hat
{u}(\xi)$. Here we consider $m>\frac{1}{2}$ and either $p\in\mathbb{N}$ or
$p=\frac{q}{n}$ with $q$ and $n$ being even and odd natural numbers, respectively.

It is clear that $\Vert\mathcal{M}(\cdot)\Vert_{L^{2}}\sim\Vert\cdot
\Vert_{H^{m}}$ and $\alpha(\xi)=|\xi|^{m}$ is real-valued and even. \newline

In \cite{johnson-fractional-13}, a family of small periodic traveling waves
$u_{a,b}(t,x)$ of (\ref{kdv}) were constructed for $|a|,|b|<<1$. It was also
showed in Theorem 3.4 of \cite{johnson-fractional-13} that $u_{a,b}(t,x)$ is
linearly modulationally unstable if $m\in(\frac{1}{2},1)$ or if $m>1$ and
$p>p^{\ast}(m)$, where $p^{\ast}(m)$ is defined by
\[
p^{\ast}(m):=\frac{2^{m}(3+m)-4-2m}{2+2^{m}(m-1)}.
\]

Therefore, if $m\in(\frac{1}{2},1)$ and $p\in\mathbb{N\ }$or if $m>1$ and
$p>p^{\ast}(m)$ and $|a|,|b|<<1$, then Theorems \ref{thm-smooth-f} and
\ref{thm-bootstrap-instability} can be applied to obtain nonlinear
modulational instability of $u_{a,b}(t,x)$ for both multiple periodic and
localized perturbations. When $m=2$, equation (\ref{kdv}) is reduced to the
generalized KDV equation.

\subsection{BBM equation}

Consider the BBM equation
\begin{equation}
(1-\partial_{xx})\partial_{t}u+\partial_{x}(u+u^{2})=0. \label{eqn-bbm}%
\end{equation}

In \cite{hur-pandey}, the authors showed that (\ref{eqn-bbm}) admits a family
of periodic traveling wave solutions $u_{c}$ in the following form,%
\begin{align*}
u_{c}(t,x;m,a) &  =a\cos(m(x-ct))+a^{2}\frac{1+m^{2}}{6m^{2}}\cos
(2m(x-ct)-3)+o(a^{3}),\\
c(m,a) &  =\frac{1}{1+m^{2}}-a^{2}\frac{5}{6m^{2}}+o(a^{4}),
\end{align*}
with $|a|\ll1$. Furthermore, it was showed in \cite{hur-pandey} that
$u_{c}(t,x;m,a)$ is linearly modulationally unstable if $m>\sqrt{3}$. Applying
Theorem \ref{thmbbm}, one can obtain the nonlinear modulational instability of
$u_{c}(t,x;m,a)$.

\begin{center}
{\Large Acknowledgement}
\end{center}

Zhiwu Lin is supported in part by a NSF grant DMS-1411803. Shasha Liao is
partially supported by the China Scholarship Council No. 20150620040.

\end{document}